\documentclass[11pt]{amsart}
\usepackage[T1]{fontenc}
\usepackage{lmodern,amsmath,amssymb,mathtools,microtype,esint}
\usepackage[margin=1.05in]{geometry}
\usepackage{enumitem,xcolor,etoolbox}
\usepackage{booktabs,tabularx,placeins}
\usepackage[colorlinks=true,linkcolor=blue,citecolor=blue,urlcolor=blue]{hyperref}
\makeatletter
\patchcmd{\@setauthors}{\MakeUppercase{\authors}}{\authors}{}{}
\makeatother
\setlist[enumerate]{label=\textup{(\roman*)},leftmargin=2.1em}
\numberwithin{equation}{section}
\newtheorem{theorem}{Theorem}[section]
\newtheorem{lemma}[theorem]{Lemma}
\newtheorem{proposition}[theorem]{Proposition}
\newtheorem{corollary}[theorem]{Corollary}
\theoremstyle{definition}\newtheorem{example}[theorem]{Example}
\theoremstyle{remark}
\newcommand{\C}{\mathbb C}\newcommand{\R}{\mathbb R}
\newcommand{\IDA}{\mathrm{IDA}}\newcommand{\BDA}{\mathrm{BDA}}\newcommand{\VDA}{\mathrm{VDA}}
\newcommand{\ess}{\mathrm{ess}}\newcommand{\Sch}{\mathcal S}
\newcommand{\Symb}{\mathcal D_\varphi}
\newcommand{\norm}[1]{\left\lVert #1\right\rVert}
\newcommand{\avint}{\mathop{\fint}}
\DeclareMathOperator{\spanop}{span}\DeclareMathOperator{\supp}{supp}
\title[IDA symbols and Toeplitz operators]{IDA symbols and Schatten--Lorentz theory for Toeplitz operators on weighted Fock spaces}
\author[Chunxu Xu and Jianxiang Dong]{Chunxu Xu\textsuperscript{a},
Jianxiang Dong\textsuperscript{b,*}}
\dedicatory{\textsuperscript{a}School of Science, Nanjing Forestry University,
Nanjing 210037, P.R. China\\
\textsuperscript{b}School of Mathematics and Statistics, Tianshui Normal University,
Tianshui 741000, P.R. China}
\thanks{\textsuperscript{*}Corresponding author.}
\thanks{\textit{Email addresses:}
\href{mailto:1968385450@qq.com}{1968385450@qq.com} (Chunxu Xu),
\href{mailto:jianxd@tsnu.edu.cn}{jianxd@tsnu.edu.cn} (Jianxiang Dong).}
\hypersetup{
 pdftitle={IDA symbols and Schatten--Lorentz theory for Toeplitz operators on weighted Fock spaces},
 pdfauthor={Chunxu Xu and Jianxiang Dong},
 pdfsubject={Weighted Fock spaces, IDA symbols, Schatten--Lorentz ideals, and Weyl asymptotics},
 pdfkeywords={Toeplitz operator, IDA space, weighted Fock space, Schatten--Lorentz ideal, singular values, Weyl asymptotics}
}
\date{}
\newcommand{\articlekeywords}{Weighted Fock space, Toeplitz operator, IDA space,
Berezin transform, Schatten--Lorentz ideal, singular values, Weyl asymptotics}
\newcommand{\articlemscyear}{2020}
\newcommand{\articlemsc}{Primary 47B35; Secondary 32A25, 32A37, 35P20, 47B10}
\AtBeginDocument{%
}
\begin{document}
\raggedbottom
\begin{abstract}
We study complex-symbol Toeplitz operators on weighted Fock spaces.
The weight has a uniformly positive and bounded real Hessian.
Local analytic distance is unchanged when an entire function is added
to the symbol. We combine this distance with a complex ball average or
the Berezin transform to control the local size of the symbol.
Under a matching Lorentz condition on the distance, we characterize
membership in every Schatten--Lorentz
class. The local symbol exponent can be any $1\le q<\infty$ and is independent
of the two Schatten--Lorentz indices. The proof uses positive Toeplitz estimates,
Hankel estimates, and a local decomposition that preserves the initial
domain. Two examples show why the local exponent and the secondary
Lorentz index must be kept. We also obtain mixed mapping criteria and
regular singular-value decay estimates. An error operator with a
lower-order counting function does not change the leading coefficient. This gives
explicit Weyl constants for quadratic weights. A radial example shows
that uniform Hessian bounds alone do not imply such a constant.
\end{abstract}
\maketitle
\enlargethispage{3pt}

\begingroup
\small
\noindent\textit{Keywords.}\enspace \articlekeywords.

\smallskip
\noindent\textit{\articlemscyear\ Mathematics Subject Classification.}\enspace
\articlemsc.
\endgroup

\medskip
\setcounter{tocdepth}{1}
\tableofcontents
\medskip

\section{Introduction}
Toeplitz operators with positive symbols or positive measure symbols
are closely related to local averages and Carleson-type conditions.
Such criteria are well developed on classical and generalized Fock
spaces, including mappings between spaces with different exponents;
see \cite{IZ,HL2011,HL,IVW}. For complex symbols, cancellation makes
the problem different. Berezin transforms and mean oscillation provide
useful tools, in particular for BMO-type symbols \cite{BCI,CIL}.

A different approach is based on local distance to holomorphic
functions. This idea goes back to Luecking's work on Bergman Hankel
operators \cite{Luecking}. Hu and Virtanen introduced the IDA spaces
in the Fock setting and used them to characterize Hankel operators
\cite{HVAPDE,HVTrans}. Related BMO and mean-oscillation criteria for
simultaneous boundedness or compactness of \(H_f\) and \(H_{\bar f}\)
appear in \cite{Bauer,HuWang}. Complex-symbol Toeplitz mappings with
BMO and IMO-type symbols were studied in
\cite{WangBMOIMO,ZhangCaoHe}. Compactness of bounded-symbol Hankel
operators on classical \(F^p\) spaces, \(1<p<\infty\), was also studied
in \cite{HaV}.

Local analytic distance is unchanged when an entire function is added
to a symbol. This addition can change the Toeplitz operator.
For example, \(f(z)=z_1\) has zero local analytic
distance, whereas \(T_f\) is unbounded on the classical Fock spaces;
see Example~\ref{ex:analytic-part}.
This leads to two questions. Which scalar conditions recover the
information missed by local analytic distance? How do the resulting
criteria depend on the Fock space exponents and the local symbol exponent?

We use a complex ball average or the Berezin transform as the scalar
test. The mean-value property explains the role of the ball average.
For a holomorphic function, this average equals its value at the centre.
It therefore detects information that analytic distance misses.
For a complex symbol, cancellation prevents the Berezin transform
alone from controlling local size. Under the stated IDA hypotheses,
we prove that either scalar test controls local size when combined
with analytic distance. The resulting criteria are stated in
Theorem~\ref{thm:lorentz} for Schatten--Lorentz ideals and in
Theorem~\ref{thm:mixed} for mixed mappings. Both start from the natural
domain spanned by reproducing kernels. The scalar estimates are
proved before boundedness of \(T_f\) is known.

Let $\varphi\in C^2(\C^n;\R)$ satisfy
\begin{equation}\label{eq:weight}
 m_\varphi I_{2n}\le\operatorname{Hess}_{\R}\varphi(z)
 \le M_\varphi I_{2n},
 \qquad z\in\C^n,\quad 0<m_\varphi\le M_\varphi<\infty.
\end{equation}
This real Hessian condition implies the complex Hessian condition
used in \cite{HVAPDE}. Hence all kernel and local estimates quoted
from that paper apply here. Condition \eqref{eq:weight} includes
$\varphi(z)=\alpha|z|^2/2$, $\alpha>0$. It also includes nonradial
weights such as $\varphi(z)=|z|^2+\sin(\operatorname{Re}z_1)$.
For $1\le p<\infty$, write
\[
 L^p_\varphi=L^p(\C^n,e^{-p\varphi}dv),\qquad
 F^p_\varphi=L^p_\varphi\cap\mathcal H(\C^n),\qquad
 \norm{g}_{p,\varphi}=\norm{ge^{-\varphi}}_{L^p(dv)}.
\]
Here $dv$ is Lebesgue measure. We write $\mathcal H(\Omega)$ for the
holomorphic functions on $\Omega$. The classical $L^p$ Fock theory goes
back to \cite{JPR}.

Let $P$ and $K$ be the projection and kernel of $F^2_\varphi$. Put
\[
 k_z(w)=K(w,z)/K(z,z)^{1/2},\qquad
 \Gamma=\spanop\{k_z:z\in\C^n\}.
\]
Following \cite[(2-7)]{HVAPDE}, use the initial symbol domain
\begin{equation}\label{eq:domain}
 \mathcal D_\varphi=\{f\text{ measurable}:fg\in L^1_\varphi
                 \text{ for every }g\in\Gamma\}.
\end{equation}
The condition $f\in\mathcal D_\varphi$ ensures that the following
operators are defined on $\Gamma$.
The projection extends boundedly to $L^1_\varphi$. Thus, on $\Gamma$,
\[
 T_fg=P(fg),\qquad H_fg=(I-P)(fg),\qquad M_fg=fg.
\]
We always mean extensions of these initial operators. The notation $\norm{T_f}_{p\to q}$ refers to $T_f:F^p_\varphi\to F^q_\varphi$.

For \(z\in\mathbb C^n\) and \(r>0\), let
\(B(z,r)=\{w\in\mathbb C^n:|w-z|<r\}\), and let
\(\avint_E\) denote the normalized Lebesgue average over \(E\).
For \(q>0\), \(r>0\), and \(f\in L^q_{\rm loc}(\mathbb C^n)\), set
\[
\begin{aligned}
G_{q,r}f(z)
&=
\inf_{h\in\mathcal H(B(z,r))}
\left(
\avint_{B(z,r)} |f-h|^q\,dv
\right)^{1/q},\\
M_{q,r}f(z)
&=
\left(
\avint_{B(z,r)} |f|^q\,dv
\right)^{1/q}.
\end{aligned}
\]
Here \(G_{q,r}f(z)\) is the local \(L^q\)-distance from \(f\) to
holomorphic functions on \(B(z,r)\), while \(M_{q,r}f(z)\) is the
local \(L^q\)-mean of \(f\).
For $0<s\le\infty$, define
\[
 \IDA^{s,q}=\{f\in L^q_{\rm loc}:G_{q,1}f\in L^s(dv)\},\qquad
 \norm{f}_{\IDA^{s,q}}=\norm{G_{q,1}f}_{L^s(dv)}.
\]
Write $\BDA^q=\IDA^{\infty,q}$. The class $\VDA^q$ consists of
the functions in $L^q_{\rm loc}$ for which $G_{q,1}f(z)\to0$ as
$|z|\to\infty$. These quantities are seminorms when $s,q\ge1$
and quasi-seminorms in general. Adding an entire function leaves
$G_{q,r}f$ unchanged. For $q\ge1$, changing the fixed radius gives
equivalent seminorms and the same VDA class \cite[Section 3]{HVAPDE}.

For $q\ge1$ we use two scalar functions:
\[
 a_rf(z)=\avint_{B(z,r)}f\,dv,\qquad
 b_f(z)=\int_{\C^n}f(w)|k_z(w)|^2e^{-2\varphi(w)}\,dv(w).
\]
The first function is a complex average. The second is the Berezin
transform. It equals $T_fk_z(z)/K(z,z)^{1/2}$. The integral converges
absolutely because $fk_z\in L^1_\varphi$ and $|k_z|e^{-\varphi}$
is bounded. In the next theorem, the same exponent $q$ is used in
the range space $F^q_\varphi$ and in the local means of the symbol.

\begin{theorem}[Different Fock exponents]\label{thm:mixed}
Assume \eqref{eq:weight}, let $1\le p,q<\infty$, let $f\in\Symb\cap L^q_{\rm loc}$, and fix $r>0$.
\begin{enumerate}
\item Suppose $p\le q$ and $f\in\BDA^q$. Then
\[
 T_f:F^p_\varphi\to F^q_\varphi\text{ is bounded}
 \ \Longleftrightarrow\ b_f\in L^\infty
 \ \Longleftrightarrow\ a_rf\in L^\infty
 \ \Longleftrightarrow\ M_{q,r}f\in L^\infty.
\]
Moreover,
\begin{equation}\label{eq:boundednorm}
 \norm{T_f}_{p\to q}+\norm{G_{q,r}f}_\infty
 \asymp\norm{M_{q,r}f}_\infty
 \asymp\norm{b_f}_\infty+\norm{G_{q,r}f}_\infty
 \asymp\norm{a_rf}_\infty+\norm{G_{q,r}f}_\infty.
\end{equation}
\item Suppose $p\le q$ and $f\in\VDA^q$. Then
\[
 T_f:F^p_\varphi\to F^q_\varphi\text{ is compact}
 \ \Longleftrightarrow\ b_f(z)\to0
 \ \Longleftrightarrow\ a_rf(z)\to0
 \ \Longleftrightarrow\ M_{q,r}f(z)\to0.
\]
All limits are as $|z|\to\infty$. If $T_f$ is bounded, then
\begin{equation}\label{eq:ess}
 \norm{T_f}_{\ess,p\to q}
 \asymp\limsup_{|z|\to\infty}|b_f(z)|
 \asymp\limsup_{|z|\to\infty}|a_rf(z)|
 \asymp\limsup_{|z|\to\infty}M_{q,r}f(z).
\end{equation}
\item Suppose $q<p$, put $s=pq/(p-q)$, and assume $f\in\IDA^{s,q}$. Then
\begin{equation*}
 \begin{split}
 T_f:F^p_\varphi\to F^q_\varphi\text{ is bounded}
 &\ \Longleftrightarrow\ T_f\text{ is compact}\\
 &\ \Longleftrightarrow\ b_f\in L^s
 \ \Longleftrightarrow\ a_rf\in L^s
 \ \Longleftrightarrow\ M_{q,r}f\in L^s.
 \end{split}
\end{equation*}
The corresponding norm estimate is
\begin{equation}\label{eq:descendingnorm}
 \norm{T_f}_{p\to q}+\norm{G_{q,r}f}_s
 \asymp\norm{M_{q,r}f}_s
 \asymp\norm{b_f}_s+\norm{G_{q,r}f}_s
 \asymp\norm{a_rf}_s+\norm{G_{q,r}f}_s.
\end{equation}
\end{enumerate}
Constants may depend on the fixed weight bounds, dimension, radius, and exponents, but not on $f$.
\end{theorem}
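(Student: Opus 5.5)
The plan is to reduce everything to two ingredients. The first is a local decomposition $f=f_1+f_2$ in which $f_2$ is built from a partition of unity subordinate to a lattice of balls $B(a_j,r)$. The second is a set of scalar estimates comparing $M_{q,r}f$, $a_rf$ and $b_f$ modulo $G_{q,r}f$. Following \cite{HVAPDE}, on each ball choose a near-optimal holomorphic $h_j\in\mathcal H(B(a_j,2r))$ with $\big(\avint_{B(a_j,2r)}|f-h_j|^q\big)^{1/q}\lesssim G_{q,2r}f(a_j)$, and set $f_1=\sum_j\psi_jh_j$, $f_2=f-f_1$. Then $|f_2|$ is locally controlled in $L^q$ by $G$, and $\bar\partial f_1=\sum_j(\bar\partial\psi_j)(h_j-h_k)$ is pointwise controlled by $G$ via the mean-value property. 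Because radii are interchangeable for $q\ge1$ \cite[Section 3]{HVAPDE}, I may assume $f\in\BDA^q$, $\VDA^q$ or $\IDA^{s,q}$ at every radius. All operators are taken on $\Gamma$, and $f_1,f_2\in\Symb$ follows from the local $L^q$ control together with the Gaussian kernel decay.

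\textbf{Scalar comparison.} First, $M_{q,r}f\lesssim M_{q,2r}(f-h)+|h|$ averaged. By the mean-value property, $a_rh(z)=h(z)$, so $|a_rf(z)-h(z)|\le G_{q,r}f(z)$ up to the choice of $h$. Sub-mean-value estimates for $|h|^q$ then give
\[
M_{q,r}f(z)\lesssim M_{1,2r}|a_{r}f|(z)+G_{q,2r}f(z).
\]
Hence $\norm{M_{q,r}f}_X\asymp\norm{a_rf}_X+\norm{G_{q,r}f}_X$ for $X=L^\infty$ and $L^s$, and the same comparison holds for $\limsup$. Next I compare $b_f$ with $a_rf$. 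Write $b_f=b_{f_1}+b_{f_2}$. Since $|k_z|^2e^{-2\varphi}$ has Gaussian decay, $|b_{f_2}|\lesssim$ a Gaussian-weighted average of $G$, which is bounded in $L^s$ and $L^\infty$ and tends to $0$ in the VDA case. For $f_1$, the step I expect to be the main obstacle is recovering $|a_rf_1|$ from $b_{f_1}$, since cancellation prevents direct control. I would try to show $b_{f_1}(z)=f_1(z)+O(\text{average of }|\bar\partial f_1|)$ using the reproducing property. Precisely, $\int h\,|k_z|^2e^{-2\varphi}=h(z)$ for holomorphic $h$ in the admissible class. For a general $f_1$ I would apply this to a local holomorphic correction and estimate the remainder by Hörmander's $\bar\partial$-estimate with the weight $2\varphi$, which is legitimate under \eqref{eq:weight}. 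This yields $|b_f-a_rf|\lesssim$ a Gaussian average of $G_{q,2r}f$, which closes all scalar equivalences in (i)–(iii).

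\textbf{Operator bounds.} For sufficiency, $T_f=T_{f_1}+T_{f_2}$. The Hankel-type part satisfies $\norm{H_{f_1}}_{p\to q}\lesssim\norm{\bar\partial f_1}_X$, which also gives control of $P(f_1g)-f_1g$. Then $T_{f_1}g=f_1g-H_{f_1}g$, and $\norm{f_1g}_{q,\varphi}$ is bounded by Carleson-measure estimates for the positive measure $|f_1|^qdv$ from $F^p_\varphi$ to $L^q$. These are the known positive-symbol criteria \cite{HL,IVW}: for $p\le q$ the condition is $\sup M_{q,r}f<\infty$, or $M_{q,r}f\to0$ for compactness, and for $q<p$ it is $M_{q,r}f\in L^{s}$ with $s=pq/(p-q)$, via the Hölder–Luecking argument. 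The term $T_{f_2}$ is handled the same way using $|T_{f_2}g|\le P(|f_2||g|)$ and boundedness of the positive operator with kernel $|K|$ \cite{HVAPDE}. For necessity, $|b_f(z)|=|\langle T_fk_z,k_z\rangle|\le\norm{T_f}_{p\to q}\norm{k_z}_{p,\varphi}\norm{k_z}_{q',\varphi}$, and these norms are uniformly bounded, giving $\norm{b_f}_\infty\lesssim\norm{T_f}$. If $T_f$ is compact, then $k_z\to0$ weakly gives $b_f\to0$. The essential-norm bound \eqref{eq:ess} follows by testing against compact perturbations $T_f-K$ and using $\langle Kk_z,k_z\rangle\to0$. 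In case (iii) I would use the standard randomized lattice sum $g=\sum_j c_jr_j(t)k_{a_j}$ with Khintchine's inequality and duality of $\ell^{p/q}$ to get $\{b_f(a_j)\}\in\ell^s$, and then pass to $L^s$ via the local Lipschitz control of $b_f$ by $M_{1,r}f$. Compactness in (iii) is automatic, since finite truncations of the $L^s$ symbol approximate $T_f$ in norm. Combining these steps yields \eqref{eq:boundednorm} and \eqref{eq:descendingnorm}.
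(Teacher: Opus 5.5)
Your architecture differs from the paper's. The paper never compares $b_f$ with $a_rf$ directly. It uses the identity $M_f=JT_f+H_f$ on $\Gamma$, the Hankel theorem and the Carleson criterion to show that $T_f$ is bounded, compact or in the relevant class exactly when $M_{q,r}f$ is bounded, vanishing or in $L^s$. It then recovers $M_{q,r}f$ from $b_f$ through the holomorphic quotients $Q_z=T_fk_z/k_z$.

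You instead close the loop operator $\Rightarrow b_f\Rightarrow a_rf\Rightarrow M_{q,r}f\Rightarrow$ operator, with sufficiency from $T_f=PM_f$ and Carleson. The Hankel term $H_{f_1}$ is not even needed for sufficiency. Your key scalar estimate, that $|b_f-a_rf|$ is dominated by an exponential average of $G$, is correct. It also follows quickly from the paper's objects: $Q_z(z)=b_f(z)$, $\norm{f-Q_z}_{L^q(B(z,8r_0))}\lesssim\norm{H_fk_z}_{q,\varphi}$, and the mean value property give $|b_f(z)-a_{r_0}f(z)|\lesssim\norm{H_fk_z}_{q,\varphi}$, which Lemma~\ref{lem:mixed-E} controls without pointwise $\bar\partial$-estimates. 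With this, part (i) goes through. The price of your route is that everything now rests on the necessity statements for $b_f$, and two of those fail as written.

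\textbf{First gap: $p=1$ in part (ii).} The value $p=1$ is allowed, and then the kernels $k_z$ are not weakly null in $F^1_\varphi$. The dual of $F^1_\varphi$ is $F^\infty_\varphi$, and $\langle k_z,h\rangle_\varphi=\overline{h(z)}/K(z,z)^{1/2}$ does not tend to $0$ for, e.g., $h=\sum_j k_{a_j}$ over a sufficiently sparse sequence. So ``compact $\Rightarrow b_f\to0$'' and the lower essential-norm bound via $\langle Lk_z,k_z\rangle\to0$ are unjustified there; for $p>1$ your argument is fine by reflexivity. Lemma~\ref{lem:mixed-compactdiag} avoids weak convergence. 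It uses that $\{T_fk_z\}$ is relatively compact in $F^q_\varphi$, and that $g\mapsto g(z)/K(z,z)^{1/2}$ tends to $0$ uniformly on compact subsets of $F^q_\varphi$.

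You also never prove the upper bound $\norm{T_f}_{\ess,p\to q}\lesssim\limsup M_{q,r}f$. This needs the compact truncations $T_{f\mathbf 1_{B(0,N)}}$ together with $\norm{T_{f-f\mathbf 1_{B(0,N)}}}_{p\to q}\lesssim\sup_{|z|>N-r}M_{q,r}f(z)$.

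\textbf{Second gap: part (iii).} Your passage from $\{b_f(a_j)\}\in\ell^s$ to $b_f\in L^s$ ``via local Lipschitz control of $b_f$ by $M_{1,r}f$'' is circular. At that stage you only know that $T_f$ is bounded. But $M_{1,r}f\in L^s$ is exactly what your chain is supposed to deliver afterwards, and an absorption argument would need $\norm{b_f}_s<\infty$ a priori. Your proof of bounded $\Rightarrow$ compact in (iii) also runs through this step, so the gap matters.

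The fix is to note that your Khintchine estimate holds with constants uniform over all translates $\{a_j+v\}$, with $v$ in a fundamental cube $Q$. Integrating then gives $\int|b_f|^s\,dv=\int_Q\sum_j|b_f(a_j+v)|^s\,dv(v)\lesssim\norm{T_f}_{p\to q}^s$. This is what Lemma~\ref{lem:mixed-diagonal} does, with random-sign diagonalization in place of Khintchine.
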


The three parts use different IDA hypotheses. The Hankel theorem in
\cite{HVAPDE} controls $H_f$. The Carleson measure theorem in
\cite{HL} controls multiplication by $f$. We estimate $M_{q,r}f$
from $b_f$ or $a_rf$, together with $G_{q,r}f$. We prove these
estimates before boundedness of $T_f$ is known. For $p>q$, we also prove an
$L^{pq/(p-q)}$ estimate for the Berezin diagonal of every bounded map
$F^p_\varphi\to F^q_\varphi$.

For Schatten--Lorentz operators on $F^2_\varphi$, the local symbol
exponent need not equal $2$.
We use the full two-parameter Lorentz scale. Let \(v^*\) denote the
decreasing rearrangement of \(|v|\). For
\(0<\rho<\infty\) and \(0<\tau<\infty\), set
\[
 \|v\|_{L^{\rho,\tau}}
 =
 \left(
  \int_0^\infty
  \bigl[t^{1/\rho}v^*(t)\bigr]^\tau
  \,\frac{dt}{t}
 \right)^{1/\tau},
\]
and, when \(\tau=\infty\),
\[
 \|v\|_{L^{\rho,\infty}}
 =
 \sup_{t>0} t^{1/\rho}v^*(t).
\]
The corresponding Lorentz spaces are denoted by
\(L^{\rho,\tau}\).

For a sequence \(x=(x_k)_{k\ge1}\), let
\(x_1^*\ge x_2^*\ge\cdots\ge0\) be the decreasing rearrangement of
\((|x_k|)\), and define its associated step-function rearrangement by
\[
 x^*(t)=x_k^*,
 \qquad k-1\le t<k,\quad k=1,2,\ldots .
\]
The Lorentz sequence space \(\ell^{\rho,\tau}\) is defined by
\[
 \|x\|_{\ell^{\rho,\tau}}
 =
 \left(
  \int_0^\infty
  \bigl[t^{1/\rho}x^*(t)\bigr]^\tau
  \,\frac{dt}{t}
 \right)^{1/\tau},
 \qquad 0<\tau<\infty,
\]
and
\[
 \|x\|_{\ell^{\rho,\infty}}
 =
 \sup_{t>0} t^{1/\rho}x^*(t).
\]

If \(A\) is a compact operator between Hilbert spaces, we write
\(A\in\Sch_{\rho,\tau}\) when its decreasing singular-value sequence
\((s_k(A))_{k\ge1}\) belongs to \(\ell^{\rho,\tau}\), and set
$
 \|A\|_{\Sch_{\rho,\tau}}
 =
 \|(s_k(A))_{k\ge1}\|_{\ell^{\rho,\tau}}.
$
Thus \(L^{\rho,\rho}=L^\rho\) and
\(\Sch_{\rho,\rho}=\Sch_\rho\), with equivalent normalizations, while
\(\Sch_{\rho,\infty}\) is the weak Schatten class.
We also write
$N(t;A)=\#\{k:s_k(A)>t\}$ and define
\[
 \IDA^{(\rho,\tau),q}
 =\{f\in L^q_{\rm loc}:G_{q,1}f\in L^{\rho,\tau}\}.
\]
Lemma~\ref{lem:lorentz-radius} below shows that the radius $1$ may be
replaced by any fixed positive radius.

\begin{theorem}[The full two-parameter Schatten--Lorentz scale]
\label{thm:lorentz}
Assume \eqref{eq:weight}. Let $1\le q<\infty$,
$0<\rho<\infty$, $0<\tau\le\infty$, and $r>0$. Suppose that
$f\in\Symb\cap L^q_{\rm loc}$ and
$G_{q,r}f\in L^{\rho,\tau}$; equivalently,
$f\in\Symb\cap\IDA^{(\rho,\tau),q}$. Then, on $F^2_\varphi$, the
following conditions are equivalent:
\begin{enumerate}
\item $T_f\in\Sch_{\rho,\tau}$;
\item $b_f\in L^{\rho,\tau}$;
\item $a_rf\in L^{\rho,\tau}$;
\item $M_{q,r}f\in L^{\rho,\tau}$.
\end{enumerate}
If $D_{\rho,\tau}=\|G_{q,r}f\|_{\rho,\tau}$, then
\begin{equation}\label{eq:lorentznorm}
 \begin{split}
 \|T_f\|_{\Sch_{\rho,\tau}}+D_{\rho,\tau}
 &\asymp \|M_{q,r}f\|_{\rho,\tau}\\
 &\asymp \|a_rf\|_{\rho,\tau}+D_{\rho,\tau}
 \asymp \|b_f\|_{\rho,\tau}+D_{\rho,\tau}.
 \end{split}
\end{equation}
The constants may depend on the displayed parameters but not on $f$.
\end{theorem}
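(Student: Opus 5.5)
We will prove Theorem~\ref{thm:lorentz} by reducing all four conditions to the single quantity $\|M_{q,r}f\|_{\rho,\tau}$. The key tool is the chain of pointwise estimates
\[
 |a_rf(z)|\le M_{q,r}f(z),\qquad
 M_{q,r}f(z)\lesssim |a_{r}f(z)|+G_{q,2r}f(z),
\]
together with the analogous estimate
\[
 M_{q,r}f(z)\lesssim \sup_{|w-z|<r}|b_f(w)| + (\text{Gaussian-weighted average of }G_{q,r}f\text{ near }z).
\]
The first estimate comes from the mean-value property. On $B(z,2r)$, take a near-optimal holomorphic $h$. Then $a_rf(z)-h(z)=a_r(f-h)(z)$, and by subharmonicity $\sup_{B(z,r)}|h|\lesssim |h(z)|+$ a local $L^q$ norm of $f-h$. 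This argument needs care because $h(z)$ alone does not control $h$ on the ball, so we instead compare averages over nearby centres. For $b_f$, we write $f=(f-h)+h$ locally with a smooth cutoff, use the exponential decay of $|k_z|^2e^{-2\varphi}$ from the kernel estimates of \cite{HVAPDE}, and use the reproducing-type identity for holomorphic $h$. Summing over a lattice gives the comparison with an error that is a convolution of $G_{q,r}f$ with an integrable exponentially decaying kernel. Lorentz quasi-norms are invariant up to constants under such convolutions (for $\rho\le1$ we use a lattice discretization and the $\ell^{\rho,\tau}$ quasi-triangle inequality), and under changes of radius by Lemma~\ref{lem:lorentz-radius}. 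This yields (ii)$\Leftrightarrow$(iii)$\Leftrightarrow$(iv) and the corresponding norm equivalences. The reverse bounds $\|b_f\|_{\rho,\tau}\lesssim\|M_{q,r}f\|_{\rho,\tau}$ are immediate from $|b_f|\le b_{|f|}$ and the same kernel decay.

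Next we show that (iv) implies (i). We decompose $T_f=PM_f$ on $\Gamma$. We then use the localization announced in the abstract: choose a partition of unity $\{\psi_j\}$ subordinate to a lattice of balls of radius $r$, and write $f\psi_j=(f-h_j)\psi_j+h_j\psi_j$, so that $|f|\psi_j$ is controlled by $M_{q,r}f$ at the lattice point. The positive Toeplitz operator $T_{|f|}$ has $\Sch_{\rho,\tau}$ quasi-norm comparable to $\|(M_{1,r}|f|(z_j))_j\|_{\ell^{\rho,\tau}}\lesssim\|M_{q,r}f\|_{\rho,\tau}$. This follows from the known positive-measure theory for $\rho\ge1$ (via interpolation of the $\Sch_p$ results for Fock Toeplitz operators with positive symbols), and for small $\rho$ from the standard decomposition into a sum of almost orthogonal rank-one pieces $\sum_j c_j\, k_{z_j}\otimes k_{z_j}$ over sparse sublattices. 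Since $f$ need not be positive, we factor $T_f=T^{1/2}\,U\,T^{1/2}$-type, or more simply use $|\langle T_fg,g'\rangle|\le\langle T_{|f|}g,g\rangle^{1/2}\langle T_{|f|}g',g'\rangle^{1/2}$. This gives $T_f=A^*VA$ with $A=M_{|f|^{1/2}}P$, so $s$-numbers are controlled by those of $A$. Because the initial domain is $\Gamma$, each step is first checked on $\Gamma$ and then extended by density. This also yields $D_{\rho,\tau}\lesssim\|M_{q,r}f\|_{\rho,\tau}$ trivially, since $G\le M$.

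Finally we show that (i) implies (ii), which is the main obstacle. In the Hilbert case, the Berezin transform of an operator in $\Sch_{\rho,\tau}$ lies in $L^{\rho,\tau}$ when $\rho>1$ by interpolation between $\Sch_1\to L^1$ and $\Sch_\infty\to L^\infty$. For $\rho\le1$ this fails in general, so we discretize: $b_f(z)=\langle T_fk_z,k_z\rangle$, and on a sufficiently separated lattice the vectors $k_{z_j}$ behave like a Riesz-type system. Then $(\langle T_fk_{z_j},k_{z_j}\rangle)_j\in\ell^{\rho,\tau}$ with quasi-norm at most $C\|T_f\|_{\Sch_{\rho,\tau}}$, by the near-orthonormality plus a perturbation argument on finitely many shifted sublattices. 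Passing from the lattice to $L^{\rho,\tau}$ of $b_f$ requires a local Lipschitz or sub-mean-value control of $b_f$. We obtain this control not from $b_f$ itself but by combining the lattice values with the $D_{\rho,\tau}$ error from the first step, which is why $D_{\rho,\tau}$ appears on the left side of \eqref{eq:lorentznorm}. We expect this $\rho\le1$ discretization, performed without knowing a priori that $T_f$ is bounded beyond $\Gamma$, to be the delicate point.
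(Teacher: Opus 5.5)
Your treatment of (ii)$\Leftrightarrow$(iii)$\Leftrightarrow$(iv) and of (iv)$\Rightarrow$(i) is essentially sound, but it needs two repairs.

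\begin{enumerate}
\item In (iv)$\Rightarrow$(i) you need the multiplicative inequality $s_{2k-1}(A^*VA)\le s_k(A)^2$. The ideal property alone only places $T_f$ in $\Sch_{2\rho,2\tau}$.
\item In the Berezin recovery, $\sup_{B(z,r)}|b_f|$ is not controlled in $L^{\rho,\tau}$ by $b_f$. Moreover, a cut-off destroys the reproducing identity for holomorphic pieces. What works is the holomorphic quotient $Q_z=T_fk_z/k_z$, together with the Hankel quantity $\|H_fk_z\|_{q,\varphi}$ of Lemma~\ref{lem:mixed-E} and subharmonicity of $|Q_z|^\theta$, as in Lemma~\ref{lem:mixed-recovery}.
\end{enumerate}

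For $\rho>1$, your interpolation proof of (i)$\Rightarrow$(ii), using $\Sch_1\to L^1$ and bounded $\to L^\infty$, is a valid alternative to the paper's route.

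\textbf{The gap.} The genuine gap is (i)$\Rightarrow$(ii) for $\rho<1$; interpolation does not reach $\rho=1$ either. Your discretized claim, that lattice diagonal entries of an operator in $\Sch_{\rho,\tau}$ form an $\ell^{\rho,\tau}$ sequence, is a statement about arbitrary operators. For $\rho<1$ it is false, even for orthonormal vectors and positive rank-one operators. Take $u=N^{-1/2}\sum_{j\le N}e_j$ and $A=u\otimes u$. Then $\|A\|_{\Sch_{\rho,\tau}}\asymp1$, while the diagonal $(N^{-1},\dots,N^{-1})$ has $\ell^{\rho,\tau}$ quasi-norm $\asymp N^{1/\rho-1}\to\infty$. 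Replacing $e_j$ by $k_{z_j}$ on a sparse lattice gives the same failure. So no near-orthogonality or perturbation argument can produce the bound. The Toeplitz structure and the IDA hypothesis must enter at the operator level, and your proposal never uses them there.

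\textbf{The missing idea.} The paper proves (i)$\Rightarrow$(iv) and then gets (ii) from (iv) by kernel decay.
\begin{enumerate}
\item Write $f=g+h$ as in Lemma~\ref{lem:q-decomposition}, with $gk_z\in L^2_\varphi$, $G_{2,t}g\lesssim G_{q,16t}f$, and $M_{q,t/2}h\lesssim G_{q,2t}f$.
\item On $\Gamma$,
\[ M_g=JT_f+(H_g-JT_h). \]
Here $H_g\in\Sch_{\rho,\tau}$ by Lemma~\ref{lem:lorentz-hankel}, and $T_h\in\Sch_{\rho,\tau}$ by the positive factorization. Both quasi-norms are $\lesssim D_{\rho,\tau}$.
\item Hence $M_g\in\Sch_{\rho,\tau}$, so $T_{|g|^2}=M_g^*M_g\in\Sch_{\rho/2,\tau/2}$.
\item The necessity half of Lemma~\ref{lem:positive-lorentz}, namely \eqref{eq:positivecountlower}, then gives $M_{2,r}g\in L^{\rho,\tau}$. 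Its proof uses monotonicity $0\le\nu\le\mu$ of measures and min--max on a sparse sublattice, not diagonal entries.
\item Finally, \eqref{eq:sizeforward} gives $M_{q,r}f\in L^{\rho,\tau}$.
\end{enumerate}

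The formal identity $T_{|g|^2}=T_g^*T_g+H_g^*H_g$ shows why the Hankel ideal estimate is indispensable for small $\rho$. The decomposition is indispensable when $q<2$, because the local-$L^2$ Hankel theory cannot be applied to $f$ itself; see Example~\ref{ex:local-q-below-two}.
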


The theorem requires $G_{q,r}f\in L^{\rho,\tau}$ with the same
indices as the operator ideal. The key local-size estimate is
\[
 \|M_{q,r}f\|_{\rho,\tau}
 \le C\bigl(\|b_f\|_{\rho,\tau}
     +\|G_{q,r}f\|_{\rho,\tau}\bigr).
\]
This estimate is obtained before boundedness of \(T_f\) is known.
The proof uses local holomorphic approximants to control the symbol
by its scalar data and analytic distance.

The three indices have different roles. The exponent \(q\) measures
local integrability of the symbol. The indices \(\rho\) and \(\tau\)
specify the Lorentz norm of the scalar data and of the singular-value
sequence. For a fixed operator ideal, we may choose the local exponent
\(q\), provided the corresponding Lorentz--IDA hypothesis holds.

This matters especially when \(1\le q<2\). A local \(L^q\) condition
does not imply local square integrability. Nor does a Lorentz bound for
\(G_{q,r}f\) imply the corresponding bound for \(G_{2,r}f\).
The local-\(L^2\) Hankel ideal criteria in \cite{HVTrans,FWZ}
therefore cannot be applied directly to \(f\) under our hypotheses.
Lemma~\ref{lem:q-decomposition} resolves this difficulty. It constructs
\(f=g+h\), where \(g\) satisfies the required local-\(L^2\) IDA
estimate and \(gk_z\in L^2_\varphi\) for every \(z\).
The local \(L^q\) size of \(h\) is controlled by the original analytic
distance. Positive Toeplitz estimates then control \(T_h\) in the same
ideal. Both parts preserve the initial domain. Thus the Hilbert-space
argument applies to \(g\), while \(f\) needs only the stated local
\(L^q\) hypothesis.

The local exponent matters. The symbol in
Example~\ref{ex:local-q-below-two} belongs to
\(\IDA^{(\rho,\tau),q}\) for every
\(1\le q<2\), \(0<\rho<\infty\), and \(0<\tau\le\infty\),
but belongs to no \(\IDA^{(\rho,\infty),2}\) with
\(0<\rho<\infty\).

The cases $\tau=\rho$ and $\tau=\infty$ give the strong and weak
Schatten criteria. The analytic-distance hypothesis is essential.
For $0<\tau<\rho$, the weaker condition $G_{q,r}f\in L^\rho$
need not suffice. Example~\ref{ex:secondary-index} shows that the
equivalence can then fail. This happens even for $q=2$ and a bounded
smooth symbol on classical Fock space. The proof uses small powers
to pass to Banach Lorentz spaces. It then applies Hardy inequalities.
No interpolation identity for IDA spaces is needed.

\subsection*{Relation to earlier operator-ideal and rearrangement criteria}

For positive Toeplitz operators, local masses and Berezin transforms
give ideal criteria; see \cite{IVW,Orenstein,HWZ}. Schatten and related ideal
criteria for Hankel operators are developed in
\cite{Farnsworth,HVTrans,HVAPDE,XuDoubling,XuAMS,FWZ}. These results
provide the positive and Hankel estimates used below. For a complex
Toeplitz symbol, we also need to control the local size that analytic
distance alone does not detect.

Fan, Wang, and Zeng \cite{FWZ} treat rearrangement estimates for an
$L^2$-based IDA function in a broader weighted Bergman--Fock framework.
These Hankel criteria depend on analytic distance. Our Toeplitz
criteria also use a complex average or the Berezin transform.
We obtain them for an independent local exponent in several variables
under \eqref{eq:weight}. Huang, Wang, and Zeng
\cite{HWZ} treat several-variable radial Fock-type spaces with
adapted regions. They estimate partial sums of positive Toeplitz
eigenvalues and prove Hankel form bounds. We prove the fixed-scale estimates needed here from the
positive theorem in \cite{IVW} and the decomposition in \cite{HVAPDE}.
Thus Lemmas~\ref{lem:positive-lorentz} and
\ref{lem:lorentz-hankel} serve as auxiliary estimates under
\eqref{eq:weight}. We do not claim these estimates as separate new results.

Our main contribution is to control local symbol size by $a_rf$ or
$b_f$, together with analytic distance. The local exponent is
independent of the ideal indices. The decomposition also preserves
the initial domain. The same estimates give regular singular-value
decay in Theorem~\ref{thm:regular-decay}.
Proposition~\ref{prop:counting-transfer} compares counting functions
when the error operator is small enough. Section~\ref{sec:weyl}
applies this comparison to quadratic weights and computes the Weyl
constant. It also gives a weight with uniform Hessian bounds for
which no such constant exists.

The next corollary separates the local symbol exponent from both Fock
exponents. It follows from the preceding strong results and a local
$L^1$ estimate.
\begin{corollary}[An independent local exponent]
\label{cor:unified-descending}
Assume \eqref{eq:weight}. Let $1\le q<p<\infty$ and
$1\le\sigma<\infty$, put
\[
 s=\frac{pq}{p-q},
\]
and fix $r>0$. Suppose that
$f\in\Symb\cap\IDA^{s,\sigma}$. Then the following six assertions
are equivalent:
\begin{enumerate}
\item The initial Toeplitz operator $T_f$ on $\Gamma$ extends to a bounded
operator from $F^p_\varphi$ to $F^q_\varphi$.
\item The initial Toeplitz operator $T_f$ on $\Gamma$ extends to a compact
operator from $F^p_\varphi$ to $F^q_\varphi$.
\item The initial Toeplitz operator $T_f$ on $\Gamma$ extends to an
operator in $\Sch_s(F^2_\varphi)$.
\item The Berezin transform $b_f$ belongs to $L^s(\C^n,dv)$.
\item The local average $a_rf$ belongs to $L^s(\C^n,dv)$.
\item The local size function $M_{\sigma,r}f$ belongs to
$L^s(\C^n,dv)$.
\end{enumerate}
When these assertions hold, denote the extensions in \textup{(i)} and
\textup{(iii)} by $T_f^{p,q}$ and $T_f^{2,2}$, respectively. Set
$
 D:=\norm{G_{\sigma,r}f}_{L^s(dv)}.
$
Then
\begin{equation}\label{eq:unifiednorm}
 \begin{split}
 \bigl(\norm{T_f^{p,q}}_{p\to q}+D\bigr)
 &\asymp \bigl(\norm{T_f^{2,2}}_{\Sch_s}+D\bigr)
 \asymp \norm{M_{\sigma,r}f}_{L^s(dv)}\\
 &\asymp \bigl(\norm{b_f}_{L^s(dv)}+D\bigr)
 \asymp \bigl(\norm{a_rf}_{L^s(dv)}+D\bigr).
 \end{split}
\end{equation}
Thus $\sigma$, the exponent used in $G_{\sigma,r}f$ and
$M_{\sigma,r}f$, need not equal either Fock exponent $p$ or $q$.
The hypothesis $f\in\IDA^{s,\sigma}$ is still required.
\end{corollary}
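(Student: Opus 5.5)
The plan is to reduce everything to Theorem~\ref{thm:mixed}(iii) and Theorem~\ref{thm:lorentz} with $\rho=\tau=s$, applied with local exponent $\sigma$ where that is possible. Wherever the Fock exponents force a different local exponent, the bridge is a local $L^1$ estimate. The key observation is that $\IDA^{s,\sigma}\subset\IDA^{s,1}$ by H\"older, since $G_{1,r}f\le G_{\sigma,r}f$ pointwise. Likewise $M_{1,r}f\le M_{\sigma,r}f$ and $|a_rf|\le M_{1,r}f$. So the symbol lies in $\Symb\cap\IDA^{s,1}$, and every earlier result with local exponent $1$ applies.

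\textbf{Step 1 (local $L^1$ to local $L^\sigma$).} First I show
\[
 \|M_{\sigma,r}f\|_{L^s}\le C\bigl(\|M_{1,r}f\|_{L^s}+\|G_{\sigma,r}f\|_{L^s}\bigr).
\]
Pick a near-minimizing $h_z\in\mathcal H(B(z,r))$ for $G_{\sigma,r}f(z)$. On the smaller ball $B(z,r/2)$, subharmonicity bounds $\sup|h_z|$ by $\avint_{B(z,r)}|h_z|\le M_{1,r}f(z)+G_{1,r}f(z)$. Then
\[
 M_{\sigma,r/2}f(z)\le \bigl(\avint|f-h_z|^\sigma\bigr)^{1/\sigma}+\sup_{B(z,r/2)}|h_z| .
\]
I then return to radius $r$ by covering $B(z,r)$ with boundedly many balls of radius $r/2$ centred in $B(z,r)$ and using translation invariance of Lebesgue $L^s$ norms. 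This is the same radius-change mechanism as Lemma~\ref{lem:lorentz-radius}. The reverse inequality $\|M_{1,r}f\|_s\le\|M_{\sigma,r}f\|_s$ is trivial, and $D=\|G_{\sigma,r}f\|_s\le\|M_{\sigma,r}f\|_s$ since $h=0$ is admissible.

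\textbf{Step 2 (chain of equivalences).} I apply Theorem~\ref{thm:lorentz} with $q=\sigma$ and $\rho=\tau=s$. Its hypothesis is exactly $G_{\sigma,r}f\in L^s$, so (iii)$\Leftrightarrow$(iv)$\Leftrightarrow$(v)$\Leftrightarrow$(vi). It also yields the norm equivalences involving $\|T_f^{2,2}\|_{\Sch_s}$, $\|b_f\|_s$ and $\|a_rf\|_s$ with $D$. For the mapping $F^p\to F^q$, Theorem~\ref{thm:mixed}(iii) requires the local exponent to equal the range exponent $q$. I therefore apply it with local exponent $1$ replaced as follows. If $\sigma\ge q$, then $f\in\IDA^{s,q}$ by H\"older, and Theorem~\ref{thm:mixed}(iii) applies directly. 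It gives (i)$\Leftrightarrow$(ii)$\Leftrightarrow$(iv), together with the estimate
\[
 \norm{T_f^{p,q}}_{p\to q}+\norm{G_{q,r}f}_s\asymp\norm{b_f}_s+\norm{G_{q,r}f}_s .
\]
Since $\norm{G_{q,r}f}_s\le D$, adding $D$ to both sides yields the required comparison. If $\sigma<q$, I cannot climb from $\sigma$ to $q$ through $G$. The fix is the decomposition Lemma~\ref{lem:q-decomposition} with exponent $\sigma$: write $f=g+h$. Here $g\in\IDA^{s,2}$ is locally square integrable, so its local $L^q$ data are controlled on the scale above. Meanwhile $h$ satisfies $\|M_{\sigma,r}h\|_s\lesssim D$. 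For the positive part, $T_{|h|}:F^p\to F^q$ is bounded by the Carleson-measure theorem of \cite{HL}, which needs only the $L^s$ norm of local $L^1$ masses of $|h|$. That in turn is at most $\|M_{\sigma,r}h\|_s\lesssim D$.

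\textbf{Main obstacle.} The delicate point is the case $\sigma<q$ in Step 2, specifically making the part $g$ satisfy an $\IDA^{s,q}$ (not only $\IDA^{s,2}$) bound when $q>2$. I would first try the construction of Lemma~\ref{lem:q-decomposition}: $g$ is a locally smoothed, $\bar\partial$-corrected piece whose local oscillation is controlled by $G_{\sigma,r}f$ in every $L^t$, $t<\infty$, via mean-value estimates. Failing that, I would sidestep it. I would use the $\Sch_s$ bound and the $b_f$ bound already obtained to control $T_f^{p,q}$ through $\|T_g\|+\|T_h\|$. The operator $T_h$ is handled by the positive estimate. For $T_g$, I would use Theorem~\ref{thm:mixed}(iii) with local exponent $2$ or $q$, whichever is available, together with $b_g=b_f-b_h$ and $\|b_h\|_s\lesssim\|M_{1,r}h\|_s\lesssim D$. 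The converse direction (i)$\Rightarrow$(iv) uses the $L^{pq/(p-q)}$ Berezin diagonal estimate for bounded maps $F^p\to F^q$ mentioned after Theorem~\ref{thm:mixed}. Combining this with Step 1 closes \eqref{eq:unifiednorm}.
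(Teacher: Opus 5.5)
Your treatment of (iii)--(vi) via Theorem~\ref{thm:lorentz} with $(\rho,\tau)=(s,s)$ and local exponent $\sigma$ matches the paper. So does your treatment of (i)$\Rightarrow$(iv) via Lemma~\ref{lem:mixed-diagonal}. Your case $\sigma\ge q$ is also a correct alternative. Since $G_{q,r}f\le G_{\sigma,r}f$ and $L^\sigma_{\rm loc}\subset L^q_{\rm loc}$, Theorem~\ref{thm:mixed}(iii) applies, and adding $D\ge\norm{G_{q,r}f}_s$ to both sides gives the mapping part of \eqref{eq:unifiednorm}.

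The gap is the case $\sigma<q$, which you leave as an unresolved obstacle. In the sidestep you only claim that the positive operator $T_{|h|}:F^p_\varphi\to F^q_\varphi$ is bounded. What you need is boundedness \emph{and compactness} of the complex-symbol operator $T_h$, and the positive Carleson theorem does not state this. Passing from $T_{|h|}$ to $T_h$ between different Fock spaces needs its own argument. The missing ingredient is the absolute-kernel estimate of Lemma~\ref{lem:local-one-descending}. Take small cubes $Q_j$ with centers $a_j$, and set $m_j=\int_{Q_j}|h|\,dv$ and $u_j=\sup_{Q_j}|u|e^{-\varphi}$. Then $|T_hu(z)|e^{-\varphi(z)}\le C\sum_je^{-c|z-a_j|}m_ju_j$. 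Discrete Young and H\"older with $1/q=1/p+1/s$ give $\norm{T_h}_{p\to q}\le C\norm{M_{1,r}h}_s$, and truncating the symbol to bounded sets gives compactness.

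The idea you missed is that this estimate uses no cancellation and nothing about the symbol beyond $M_{1,r}(\cdot)\in L^s$, so it applies to $f$ itself. Because $M_{1,r}f\le M_{\sigma,r}f$, it yields (vi)$\Rightarrow$(ii) with $\norm{T_f^{p,q}}_{p\to q}\le C\norm{M_{\sigma,r}f}_s$, for every $\sigma\ge1$ at once. Three further bounds then close \eqref{eq:unifiednorm}: $\norm{M_{\sigma,r}f}_s\le C(\norm{b_f}_s+D)$ from Theorem~\ref{thm:lorentz}, $\norm{b_f}_s\le C\norm{T_f^{p,q}}_{p\to q}$ from Lemma~\ref{lem:mixed-diagonal}, and $D\le\norm{M_{\sigma,r}f}_s$. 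That is the paper's proof. It makes the case split, your Step~1, and Theorem~\ref{thm:mixed}(iii) unnecessary.

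Your first route for $\sigma<q$ was in fact viable. \eqref{eq:q-decomp-sup} is a supremum bound, so it gives $G_{q,t}g\le C\,G_{\sigma,16t}f$ for every $q$, not only $q=2$, and Theorem~\ref{thm:mixed}(iii) then applies to $g$. You would still need the estimate above for $h$, though.

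Also, in Step~1 the average of $|h_z|$ is bounded by $M_{1,r}f(z)+G_{\sigma,r}f(z)+\varepsilon$, not by $M_{1,r}f(z)+G_{1,r}f(z)$. This is because $h_z$ nearly minimizes the $\sigma$-distance, not the $1$-distance.
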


Table~\ref{tab:results} summarizes the mapping and ideal criteria and
their symbol assumptions.

\begin{table}[!htbp]
\caption{Toeplitz criteria under the stated IDA assumptions.}
\label{tab:results}
\centering
\small
\begin{minipage}{\textwidth}
Assume \eqref{eq:weight}, let $f\in\Symb$, and fix $r>0$.
For $q<p$, put $s=pq/(p-q)$. All limits are as $|z|\to\infty$.
\end{minipage}\par
\medskip
\setlength{\tabcolsep}{4pt}
\renewcommand{\arraystretch}{1.15}
\begin{tabularx}{\textwidth}{@{}>{\raggedright\arraybackslash}p{.22\textwidth} >{\raggedright\arraybackslash}p{.20\textwidth} >{\raggedright\arraybackslash}p{.18\textwidth} >{\raggedright\arraybackslash}X@{}}
\toprule
Mapping and range & Symbol assumption & Property & Criterion or estimate \\
\midrule
$F^p_\varphi\to F^q_\varphi$\newline
$1\le p\le q<\infty$
& $f\in\BDA^q$
& Bounded\newline
Thm.~\ref{thm:mixed}(i)
& $b_f\in L^\infty$ \\
\addlinespace[6pt]
$F^p_\varphi\to F^q_\varphi$\newline
$1\le p\le q<\infty$
& $f\in\VDA^q$
& Compact\newline
Thm.~\ref{thm:mixed}(ii)
& $b_f(z)\to0$ \\
\addlinespace[6pt]
$F^p_\varphi\to F^q_\varphi$\newline
$1\le q<p<\infty$
& $f\in\IDA^{s,\sigma}$\newline
$1\le\sigma<\infty$
& Bounded\newline
$\Longleftrightarrow$ compact\newline
Cor.~\ref{cor:unified-descending}
& $b_f\in L^s$ \\
\addlinespace[6pt]
$F^p_\varphi\to F^q_\varphi$\newline
$1\le p\le q<\infty$
& $f\in\VDA^q$;\newline
$T_f$ bounded
& Essential norm\newline
Thm.~\ref{thm:mixed}(ii)
& $\begin{gathered}
\norm{T_f}_{\ess,p\to q}\asymp{}\\[-2pt]
\limsup_{|z|\to\infty}|b_f(z)|
\end{gathered}$ \\
\addlinespace[6pt]
$F^2_\varphi\to F^2_\varphi$\newline
$1\le q<\infty$, $\rho>0$\newline
$0<\tau\le\infty$
& $f\in\IDA^{(\rho,\tau),q}$
& $T_f\in\Sch_{\rho,\tau}$\newline
Thm.~\ref{thm:lorentz}
& $b_f\in L^{\rho,\tau}$ \\
\addlinespace[6pt]
\bottomrule
\end{tabularx}\par
\medskip
\begin{minipage}{\textwidth}
In the scalar rows, $b_f$ can be replaced by $a_rf$ or $M_{q,r}f$; the descending row uses $M_{\sigma,r}f$ instead.
That row is also equivalent to $T_f\in\Sch_s(F^2_\varphi)$.
In the essential norm row, these replacements give comparable quantities.
In the Schatten--Lorentz row, $q$ is the local symbol exponent and
$0<\tau\le\infty$.
The joint norm estimates are \eqref{eq:boundednorm},
\eqref{eq:descendingnorm}, \eqref{eq:lorentznorm}, and
\eqref{eq:unifiednorm}.
\end{minipage}
\end{table}

Section~\ref{sec:local} gives the local estimates.
Section~\ref{sec:mixedproof} proves the mixed mapping criteria. Section~\ref{sec:schattenproof} develops the full
Schatten--Lorentz theory, regular decay envelopes, and the counting
transfer. Section~\ref{sec:weyl} proves Weyl laws for quadratic weights
and identifies a sharp obstruction for general Hessian-bounded weights.
Section~\ref{sec:examples} gives sharpness and limitation examples.

\section{Preliminaries and local estimates}\label{sec:local}
All constants below may depend on the fixed geometric parameters and the displayed exponents. We write $A\lesssim B$ when $A\le CB$ with such a constant. We write $A\asymp B$ when both inequalities hold. An unlabelled $L^s$ norm is taken with respect to $dv$. Limits at infinity mean $|z|\to\infty$.

\subsection{Kernels, domains, and local averages}
The reproducing kernel and weighted submean estimates give the following
basic bounds. There exist constants \(c,C,c_0,r_0>0\) such that
\[
\begin{aligned}
K(z,z)e^{-2\varphi(z)}&\asymp 1,\\
|k_z(w)|e^{-\varphi(w)}&\le C e^{-c|z-w|},\\
|k_z(w)|e^{-\varphi(w)}&\ge c_0,
    \qquad |w-z|<r_0.
\end{aligned}
\]
Moreover, for every fixed \(k,r>0\),
\[
|g(z)|^k e^{-k\varphi(z)}
 \le C_{k,r}\int_{B(z,r)}
 |g(w)|^k e^{-k\varphi(w)}\,dv(w)
\]
whenever \(g\) is holomorphic on \(B(z,r)\).

The last estimate controls point evaluations by local weighted norms.
In particular,
$
 \|k_z\|_{k,\varphi}\asymp 1
$
for every fixed \(0<k<\infty\). See
\cite{Delin,SV,HVAPDE}. In local arguments we first work with small
radii and then use radius equivalence.

\FloatBarrier
The projection $P$ is bounded on $L^k_\varphi$ and reproduces
$F^k_\varphi$ for $1\le k<\infty$. The subspace $\Gamma$ is dense
in each such $F^k_\varphi$ \cite[Section 2]{HVAPDE}. By the weighted
submean estimate, norm convergence implies local uniform convergence.
The same estimate gives $F^p_\varphi\subset F^q_\varphi$ for $p\le q$.
Also, $F^q_\varphi$ is closed in $L^q_\varphi$. Hence the boundedness
and compactness of $T_f$ are unchanged if we regard it as a map
into $L^q_\varphi$.

The essential norm of a bounded $A:F^p_\varphi\to F^q_\varphi$ is
\[
 \norm{A}_{\ess,p\to q}
 =\inf\{\norm{A-L}_{p\to q}:L:F^p_\varphi\to F^q_\varphi\text{ is compact}\}.
\]

\begin{lemma}\label{lem:mean}
Let \(q\ge1\), \(0<t<\infty\), \(r>0\), and
\(f\in L^q_{\rm loc}(\mathbb C^n)\). Then
\begin{equation}\label{eq:meanrecovery}
 M_{q,r}f(z)\le C G_{q,4r}f(z)
    +C\left(\avint_{B(z,2r)}|a_rf(w)|^t\,dv(w)\right)^{1/t}.
\end{equation}
Consequently, for \(0<t\le\infty\),
\[
 \norm{M_{q,r}f}_t
 \asymp
 \norm{a_rf}_t+\norm{G_{q,r}f}_t.
\]
If \(f\in\VDA^q\), then
\[
 \limsup_{|z|\to\infty}M_{q,r}f(z)
 \asymp
 \limsup_{|z|\to\infty}|a_rf(z)|.
\]
\end{lemma}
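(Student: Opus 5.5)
Take a near-optimal holomorphic approximant on the big ball and apply the mean-value property. Fix $z$ and choose $h\in\mathcal H(B(z,4r))$ with
\[
\Bigl(\avint_{B(z,4r)}|f-h|^q\,dv\Bigr)^{1/q}\le 2G_{q,4r}f(z).
\]
If $G_{q,4r}f(z)=0$, we instead use an approximating sequence and pass to the limit. For every $w\in B(z,2r)$ we have $B(w,r)\subset B(z,3r)$.

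\emph{Pointwise comparison.} By the mean-value property, $a_rh(w)=h(w)$. Hence
\[
|h(w)-a_rf(w)|\le\avint_{B(w,r)}|f-h|\,dv\lesssim G_{q,4r}f(z).
\]
This uses Hölder's inequality with $q\ge1$, and the fact that $B(w,r)$ has volume comparable to $B(z,4r)$. By Minkowski's inequality,
\[
M_{q,r}f(z)\le\Bigl(\avint_{B(z,r)}|f-h|^q\Bigr)^{1/q}+\Bigl(\avint_{B(z,r)}|h|^q\Bigr)^{1/q}\lesssim G_{q,4r}f(z)+\sup_{B(z,r)}|h|.
\]

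\emph{Bounding $\sup|h|$ by an $L^t$ mean of $a_rf$.} This is the main step. Since $|h|^t$ is plurisubharmonic for every $t>0$, the unweighted form of the submean estimate gives, for $u\in B(z,r)$,
\[
|h(u)|\le C\Bigl(\avint_{B(u,r)}|h|^t\Bigr)^{1/t}\le C'\Bigl(\avint_{B(z,2r)}|h|^t\Bigr)^{1/t}.
\]
This is valid for $0<t<1$ as well, since $\log|h|$ is subharmonic. Now write $|h|\le |a_rf|+|h-a_rf|$ on $B(z,2r)$ and apply the quasi-triangle inequality for $L^t$. Combined with the pointwise comparison, this gives \eqref{eq:meanrecovery}.

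\emph{Norm equivalence.} The bound $\norm{a_rf}_t\le\norm{M_{1,r}f}_t\le\norm{M_{q,r}f}_t$ is immediate. The bound $G_{q,r}f\le M_{q,r}f$ follows by taking $h=0$. For the converse, take $L^t$ norms in \eqref{eq:meanrecovery} and handle the two terms separately:
\begin{itemize}
\item[(a)] The averaging operator $v\mapsto(\avint_{B(z,2r)}|v|^t)^{1/t}$ is bounded on $L^t$ by Fubini, and the case $t=\infty$ is trivial.
\item[(b)] We need $\norm{G_{q,4r}f}_t\lesssim\norm{G_{q,r}f}_t$. For $t=\infty$ this is the radius equivalence of \cite[Section 3]{HVAPDE}. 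For finite $t$ we use the standard form of that argument: $G_{q,4r}f(z)\lesssim\sup_{w\in B(z,4r)}G_{q,r}f(w)$, and moreover this sup can be replaced by an $L^t$ average over a slightly larger ball of $G_{q,r}f$ at a smaller radius.
\end{itemize}
The most robust route for (b) is to first prove $G_{q,4r}f(z)\lesssim\bigl(\avint_{B(z,5r)}G_{q,r/2}f^t\bigr)^{1/t}$ via the chain/patching of local approximants from \cite{HVAPDE}, applied at radius $r/2$. We then use that $\norm{G_{q,r/2}f}_t\lesssim\norm{G_{q,r}f}_t$ trivially, since $B(w,r/2)\subset B(w,r)$ and the volume ratio is fixed. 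This radius-change step is where I expect the technical obstacle to lie. If needed, one can instead apply \eqref{eq:meanrecovery} with $r$ replaced by $r/4$ and compare $M_{q,r}$ with $M_{q,r/4}$ by covering $B(z,r)$ with finitely many balls $B(w_j,r/4)$.

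\emph{The $\VDA$ statement.} If $f\in\VDA^q$, then $G_{q,4r}f(z)\to0$ as $|z|\to\infty$, using equivalence of radii for $\VDA$. Take $t=\infty$ in the local form, i.e. replace the $L^t$ mean by $\sup_{B(z,2r)}|a_rf|$. Then \eqref{eq:meanrecovery} gives $\limsup M_{q,r}f\lesssim\limsup|a_rf|$, and the reverse inequality $|a_rf|\le M_{q,r}f$ is trivial.
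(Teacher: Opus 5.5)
Your proposal is correct and follows the paper's proof essentially step by step. Both arguments take a near-optimal approximant on $B(z,4r)$, compare $h$ with $a_rf$ via the mean-value property plus H\"older on $B(w,r)\subset B(z,3r)$, and obtain \eqref{eq:meanrecovery} from the $t$-submean inequality and Minkowski. Both then finish with Tonelli and the radius equivalence of \cite{HVAPDE}; the paper simply cites that equivalence (Corollary~3.10) rather than re-deriving it through the $\bar\partial$-decomposition you sketch.

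One remark on your fallback. Applying \eqref{eq:meanrecovery} at radius $r/4$ puts $a_{r/4}f$ rather than $a_rf$ on the right, which you do not address. The same mean-value trick repairs this, since $|a_{r/4}f(w)-a_rf(w)|\le C\,G_{q,r}f(w)$. With that bound the norm equivalence needs no radius change for $G$ at all.
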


\begin{proof}
Fix \(\varepsilon>0\), and choose
\(h\in\mathcal H(B(z,4r))\) such that
\[
 \left(\avint_{B(z,4r)}|f-h|^q\,dv\right)^{1/q}
 \le G_{q,4r}f(z)+\varepsilon.
\]
If \(w\in B(z,2r)\), then \(B(w,r)\subset B(z,4r)\). Hence the
mean-value property and H\"older's inequality give
\[
 |h(w)-a_rf(w)|
 \le C\bigl(G_{q,4r}f(z)+\varepsilon\bigr).
\]
The holomorphic submean inequality on \(B(z,2r)\) therefore yields
\[
 \sup_{B(z,r)}|h|
 \le C\bigl(G_{q,4r}f(z)+\varepsilon\bigr)
 +C\left(\avint_{B(z,2r)}|a_rf(w)|^t\,dv(w)\right)^{1/t}.
\]
Since \(q\ge1\), applying Minkowski's inequality to \(f=(f-h)+h\)
and letting \(\varepsilon\downarrow0\) proves
\eqref{eq:meanrecovery}.

For \(0<t<\infty\), integration of \eqref{eq:meanrecovery} and
Tonelli's theorem give
\[
 \norm{M_{q,r}f}_t
 \le C\bigl(\norm{G_{q,4r}f}_t+\norm{a_rf}_t\bigr).
\]
For \(t=\infty\), the same argument is used with the local supremum
of \(a_rf\). The radius equivalence
\cite[Corollary~3.10]{HVAPDE} replaces \(G_{q,4r}\) by \(G_{q,r}\).
The reverse estimates follow from
\[
 |a_rf|\le M_{q,r}f,
 \qquad
 G_{q,r}f\le M_{q,r}f.
\]
If \(f\in\VDA^q\), then \(G_{q,4r}f(z)\to0\), and the same pointwise
estimate gives the asserted limsup comparison.

Finally, if \(R>r\), a finite covering of \(B(0,R)\) by translates
of \(B(0,r)\) gives, for every \(t>0\),
\[
 M_{q,R}f(z)^t
 \le C_t\sum_j M_{q,r}f(z+v_j)^t.
\]
Thus the corresponding \(L^t\) norms, suprema, and limsups are
equivalent. This also applies when \(0<t<1\), since only a finite
sum is involved.
\end{proof}

\subsection{Hankel operators and local size estimates}
\begin{proposition}\label{prop:background}
Assume \eqref{eq:weight}, let \(1\le p,q<\infty\), and let
\(f\in\Symb\cap L^q_{\rm loc}\). Then:
\begin{enumerate}
\item
If \(1\le p\le q<\infty\), then
$
H_f:F^p_\varphi\to L^q_\varphi
$
is bounded if and only if \(f\in\BDA^q\), and compact if and only if
\(f\in\VDA^q\). Moreover,
\[
 \|H_f\|_{F^p_\varphi\to L^q_\varphi}
 \asymp \|G_{q,r}f\|_\infty .
\]

\item
If \(1\le q<p<\infty\) and
$
 s=\frac{pq}{p-q},
$
then \(H_f:F^p_\varphi\to L^q_\varphi\) is bounded if and only if
\(f\in\IDA^{s,q}\). In this case \(H_f\) is compact and
\[
 \|H_f\|_{F^p_\varphi\to L^q_\varphi}
 \asymp \|G_{q,r}f\|_s .
\]

\item
For \(p\le q\), the multiplication operator
$
M_f:F^p_\varphi\to L^q_\varphi
$
is bounded if and only if \(M_{q,r}f\in L^\infty\), and compact if
and only if
\[
M_{q,r}f(z)\to0
\qquad (|z|\to\infty).
\]
For \(p>q\), boundedness and compactness are equivalent to
$
 M_{q,r}f\in L^{pq/(p-q)}.
$
In each case the operator norm is comparable to the corresponding
local-size norm.
\end{enumerate}
\end{proposition}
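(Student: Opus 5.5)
This proposition collects known results, so the plan is to reduce each part to the cited literature and check that the hypotheses transfer under \eqref{eq:weight}.

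\textbf{Parts (i) and (ii).} These are the Hankel theorems of Hu and Virtanen \cite{HVAPDE}. They are stated there for weights whose complex Hessian is comparable to the Euclidean form. The first step is to note that \eqref{eq:weight} implies this condition. The complex Hessian $\partial\bar\partial\varphi$ is the average of the real Hessian over the $J$-invariant directions, so its eigenvalues lie in $[m_\varphi,M_\varphi]$ up to the usual factor $1/4$. Next I would verify that their initial domain and initial operators coincide with $\Symb$ and with $H_f$ defined on $\Gamma$. This is exactly \cite[(2-7)]{HVAPDE}. Their results are stated with radius $1$; for an arbitrary fixed $r$ I would use the radius equivalence \cite[Corollary~3.10]{HVAPDE} for $\norm{G_{q,r}f}_\infty$ and the VDA condition. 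For $\norm{G_{q,r}f}_s$ with $s<\infty$, the same finite-covering argument as at the end of Lemma~\ref{lem:mean} should work. I expect the upper direction, that $G_{q,R}f$ is controlled by shifted copies of $G_{q,r}f$, to be the delicate one. Since it is the content of \cite[Section~3]{HVAPDE}, I would quote it.

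\textbf{Part (iii).} For $M_f:F^p_\varphi\to L^q_\varphi$, boundedness means exactly that the measure $d\mu=|f|^q\,dv$ satisfies the $(p,q)$ Fock--Carleson condition: $\int|g|^qe^{-q\varphi}\,d\mu\le C\norm{g}_{p,\varphi}^q$. The plan is to apply the Carleson measure theorem of Hu and Lv \cite{HL}. For $p\le q$, it characterizes such $\mu$ by $\sup_z\mu(B(z,r))<\infty$ (vanishing at infinity for compactness). For $p>q$, the condition is $\mu(B(\cdot,r))\in L^{p/(p-q)}$. Since $\mu(B(z,r))\asymp M_{q,r}f(z)^q$, the conditions translate as follows:
\[
 \mu(B(\cdot,r))\in L^{p/(p-q)}\iff M_{q,r}f\in L^{pq/(p-q)}.
\]
The norms correspond after taking $q$-th roots. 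Compactness of the embedding corresponds to compactness of $M_f$, because $\norm{M_fg}_{q,\varphi}^q=\int|g|^qe^{-q\varphi}\,d\mu$. The weights in \cite{HL} satisfy $dd^c\varphi\asymp\omega_0$, which again follows from \eqref{eq:weight}.

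\textbf{Main obstacle.} The main point is not a new estimate. It is making sure that operators defined only on $\Gamma$ extend, and that the quoted norm equivalences apply to these extensions. Density of $\Gamma$ in $F^p_\varphi$ settles this: boundedness on $\Gamma$ with the stated norm gives a unique bounded extension. The same density shows that compactness of the extension is characterized by the cited conditions.
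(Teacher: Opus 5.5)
Your proposal is correct and follows the paper's proof: parts (i)--(ii) are quoted from \cite[Theorem~1.1]{HVAPDE} after checking that the initial symbol domains agree. Part (iii) applies the $(p,q)$-Carleson criterion of \cite{HL} to $d\mu=|f|^q\,dv$, using that multiplication by $f$ is isometric from $L^q(e^{-q\varphi}d\mu)$ into $L^q_\varphi$ and that $\mu(B(z,r))=|B(0,r)|\,M_{q,r}f(z)^q$. One small caution: plain finite coverings, as at the end of Lemma~\ref{lem:mean}, do not by themselves compare $\|G_{q,R}f\|_s$ with $\|G_{q,r}f\|_s$, because holomorphic approximants on small balls do not patch together. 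The radius change needs the decomposition from \cite[Section~3]{HVAPDE}, which you ultimately invoke.
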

\begin{proof}
The first two assertions are the Banach-range cases of
\cite[Theorem~1.1]{HVAPDE}. The initial symbol class used there
agrees with \eqref{eq:domain}, since normalized and unnormalized
reproducing kernels have the same finite linear span.

For part~(iii), set \(d\mu_f=|f|^q\,dv\). For \(g\in F^p_\varphi\),
\[
 \|M_fg\|_{q,\varphi}^q
 =\int_{\C^n}|g(w)|^qe^{-q\varphi(w)}\,d\mu_f(w).
\]
Multiplication by \(f\) is an isometry from
\(L^q(e^{-q\varphi}d\mu_f)\) onto a closed subspace of
\(L^q_\varphi\). Thus the Carleson embedding and the multiplication
operator have the same norm and compactness properties. The
\((p,q)\)-Carleson measure criterion applies;
see \cite{HL} and \cite[Section~2E]{HVAPDE}.
For every fixed \(r>0\),
\[
 \mu_f(B(z,r))
 =|B(0,r)|\,M_{q,r}f(z)^q.
\]
Thus, if \(p\le q\), boundedness is equivalent to
\(M_{q,r}f\in L^\infty\), while compactness is equivalent to
\(M_{q,r}f(z)\to0\) as \(|z|\to\infty\).
If \(p>q\), boundedness and compactness are both equivalent to
$
 M_{q,r}f\in L^{pq/(p-q)}.
$
The corresponding operator norms are comparable to the indicated
local-size norms.
\end{proof}
\begin{lemma}\label{lem:mixed-E}
Fix \(1\le q<\infty\). For
\(f\in\Symb\cap L^q_{\rm loc}\), put
$
 E_q(z)=\|H_fk_z\|_{q,\varphi}
$
with value \(+\infty\) when necessary. If \(G_{q,R}f\in L^\infty\), or
\(G_{q,R}f\in L^s\) for some \(s>q\), then
\[
 E_q(z)^q
 \le
 C\int_{\mathbb C^n}
 e^{-c|z-w|}G_{q,R}f(w)^q\,dv(w).
\]
Here \(R>0\) is fixed, and the constants may depend on \(R\).
Consequently,
\[
 \|E_q\|_\infty
 \le C\|G_{q,R}f\|_\infty,
\]
and, for \(s>q\),
\[
 \|E_q\|_s
 \le C\|G_{q,R}f\|_s.
\]
If \(f\in\VDA^q\), then
\[
 E_q(z)\longrightarrow0
 \qquad (|z|\to\infty).
\]
\end{lemma}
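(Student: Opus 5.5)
The plan is to use the Hu--Virtanen local decomposition of $f$ into a smooth part and a compactly supported local part. Concretely, I would invoke the decomposition in \cite[Section~3]{HVAPDE}: fix a lattice $\{a_j\}$ of mesh comparable to $R$ and a smooth partition of unity $\{\psi_j\}$ subordinate to the balls $B(a_j,R/2)$, with $|\bar\partial\psi_j|\lesssim1$. For each $j$ choose $h_j\in\mathcal H(B(a_j,R))$ nearly attaining $G_{q,R}f(a_j)$, and set
\[
f_1=\sum_j\psi_jh_j,\qquad f_2=f-f_1 .
\]
On $B(a_j,R/2)$ we then have $|\bar\partial f_1|\lesssim\sum_{k\sim j}|h_k-h_j|$. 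By the submean inequality applied to $h_k-h_j$, this is bounded by a local $L^q$ average of $f-h_j$ and $f-h_k$, hence by $\lesssim G_{q,R'}f(w)$ for nearby $w$. Likewise $M_{q,R/2}f_2(a_j)\lesssim G_{q,R'}f(a_j)$. By radius equivalence \cite[Corollary~3.10]{HVAPDE} we may rename $R'$ as $R$.

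Next, since $H_f=H_{f_1}+H_{f_2}$ on $\Gamma$, I would split $E_q(z)\le\|H_{f_1}k_z\|_{q,\varphi}+\|H_{f_2}k_z\|_{q,\varphi}$. For the second term, the bound $\|(I-P)(f_2k_z)\|_{q,\varphi}\lesssim\|f_2k_z\|_{q,\varphi}$ follows from boundedness of $P$ on $L^q_\varphi$. Then, using $|k_z(w)|e^{-\varphi(w)}\le Ce^{-c|z-w|}$,
\[
\|f_2k_z\|^q_{q,\varphi}\lesssim\sum_j e^{-cq|z-a_j|}M_{q,R/2}f_2(a_j)^q\lesssim\int e^{-c'|z-w|}G_{q,R}f(w)^q\,dv(w).
\]
The passage from a lattice sum to the integral uses that $G_{q,R}f(a_j)\lesssim G_{q,2R}f(w)$ for $|w-a_j|<R$.

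For the first term, I would use that $H_{f_1}k_z$ is the $L^2_\varphi$-minimal solution $u$ of $\bar\partial u=k_z\bar\partial f_1$. Hörmander's $L^q$-type estimate under \eqref{eq:weight}, as used in \cite{HVAPDE} via a kernel bound $|\mathcal K(w,\xi)|e^{-\varphi(w)+\varphi(\xi)}\lesssim e^{-c|w-\xi|}$ for the canonical solution operator, gives
\[
\|H_{f_1}k_z\|_{q,\varphi}\lesssim\|k_z\bar\partial f_1\|_{q,\varphi}.
\]
The same exponential-decay computation then bounds this by $\bigl(\int e^{-c|z-w|}G_{q,R}f(w)^q\,dv\bigr)^{1/q}$. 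A standing concern is whether $f_1,f_2$ lie in $\Symb$ so that $H_{f_i}$ are defined on $\Gamma$. Under the hypothesis $G\in L^\infty$ or $L^s$, the pointwise bounds $|f_2|$ locally in $L^q$ and $|\bar\partial f_1|$ locally bounded by $G$, together with $f\in\Symb$, should give $f_2k_z\in L^1_\varphi$ and hence $f_1=f-f_2\in\Symb$. This is exactly the role of the hypotheses on $G$: they guarantee polynomial (indeed controlled) growth of $G$. I expect this step, together with justifying the $\bar\partial$-estimate at $L^q$ level for $q\ne2$, to be the main obstacle. I would try to take both directly from \cite[Section~4]{HVAPDE}, where this decomposition is carried out.

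Finally, the consequences follow from the convolution bound. For $L^\infty$, the bound is immediate since $e^{-c|\cdot|}\in L^1$. For $s>q$, write $E_q^q\lesssim e^{-c|\cdot|}*G^q$ and apply Young's inequality in $L^{s/q}$ with $s/q>1$, which yields $\|E_q\|_s\lesssim\|G\|_s$. For $\VDA^q$, the bound $G_{q,R}f\in L^\infty$ (valid for $\VDA$ symbols) allows us to split the integral over $|w|<|z|/2$ and its complement. On the first region the exponential factor is at most $e^{-c|z|/2}$ times a bounded $G$, and on the second region $G$ is small. Together these show $E_q(z)\to0$.
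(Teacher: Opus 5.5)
Your proposal is correct and follows essentially the same route as the paper: the decomposition $f=f_1+f_2$ (which you construct explicitly, while the paper cites \cite[Lemma~3.6]{HVAPDE}), the bound $\|H_{f_2}k_z\|_{q,\varphi}\lesssim\|f_2k_z\|_{q,\varphi}$ from boundedness of $P$, the $L^q$ $\bar\partial$-estimate $\|H_{f_1}k_z\|_{q,\varphi}\lesssim\|k_z\bar\partial f_1\|_{q,\varphi}$ from \cite{HVAPDE}, kernel decay, the domain check, Young's inequality on $L^{s/q}$, and vanishing of the convolution at infinity. One small point: the radius equivalence in \cite{HVAPDE} compares norms, not pointwise values, so ``renaming $R'$ as $R$'' needs care (for instance via finite sums of translates); the simplest fix, which is what the paper does, is to build the decomposition at scale $t=R/2$ from the start, so that $|\bar\partial f_1(w)|+M_{q,t/2}f_2(w)\le CG_{q,R}f(w)$ holds pointwise and the convolution bound comes out directly in terms of $G_{q,R}f$.
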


\begin{proof}
Put \(t=R/2\) and use the decomposition
\(f=f_1+f_2\) from \cite[Lemma~3.6]{HVAPDE}. Then
\(f_1\in C^2(\mathbb C^n)\) and
\begin{align}\label{X.1}
 |\bar\partial f_1(w)|
 +M_{q,t/2}f_2(w)
 \le C G_{q,R}f(w).
\end{align}
The same decomposition actually gives the corresponding local
\(L^q\) estimate for \(\bar\partial f_1\).

We first check the initial domains. By the exponential kernel estimate
and fixed-ball averaging,
\[
 \int_{\mathbb C^n}
 |f_2(w)k_z(w)|e^{-\varphi(w)}\,dv(w)
 \le
 C\int_{\mathbb C^n}
 e^{-c|z-u|}M_{q,t/2}f_2(u)\,dv(u).
\]
The right-hand side is finite if \(G_{q,R}f\) is bounded, and also if
\(G_{q,R}f\in L^s\) with \(s>q\), by H\"older's inequality.
Hence \(f_2\in\Symb\), and therefore \(f_1=f-f_2\in\Symb\).
The same argument with the \(q\)-th powers shows that
\[
 f_2k_z,\quad k_z\bar\partial f_1\in L_\varphi^q .
\]

By \cite[Lemma~2.4 and Corollary~2.5]{HVAPDE},
\[
 \|H_{f_1}k_z\|_{q,\varphi}
 \le C\|k_z\bar\partial f_1\|_{q,\varphi}.
\]
Since \(P\) is bounded on \(L_\varphi^q\),
\[
 \|H_{f_2}k_z\|_{q,\varphi}
 =
 \|(I-P)(f_2k_z)\|_{q,\varphi}
 \le C\|f_2k_z\|_{q,\varphi}.
\]
Thus the kernel decay gives
\[
 E_q(z)^q
 \le
 C\int_{\mathbb C^n}
 e^{-c|z-w|}
 \bigl(
  |\bar\partial f_1(w)|^q+|f_2(w)|^q
 \bigr)\,dv(w).
\]

For every fixed \(\tau>0\), Tonelli's theorem and the comparability
of \(e^{-c|z-w|}\) and \(e^{-c|z-u|}\) when \(|w-u|<\tau\) yield
\[
 \int_{\mathbb C^n}
 e^{-c|z-w|}|f_2(w)|^q\,dv(w)
 \le
 C\int_{\mathbb C^n}
 e^{-c|z-u|}M_{q,\tau}f_2(u)^q\,dv(u).
\]
Taking \(\tau=t/2\) and using \eqref{X.1}, together with
\(|\bar\partial f_1|\le C G_{q,R}f\), proves
\[
 E_q(z)^q
 \le
 C\int_{\mathbb C^n}
 e^{-c|z-w|}G_{q,R}f(w)^q\,dv(w).
\]

The \(L^\infty\) estimate is immediate. If \(s>q\), then
\(s/q>1\), and Young's inequality on \(L^{s/q}\) gives
$
 \|E_q\|_s^q
 \le
 C\|G_{q,R}f\|_s^q.
$
Finally, if \(f\in\VDA^q\), then radius equivalence gives
\(G_{q,R}f(z)\to0\). Since convolution with the integrable kernel
\(e^{-c|\cdot|}\) preserves vanishing at infinity, the preceding
pointwise estimate implies \(E_q(z)\to0\).
\end{proof}

\begin{lemma}
\label{lem:mixed-recovery}
Let \(1\le q<\infty\), \(f\in\Symb\cap L^q_{\rm loc}\), and let
\(E_q\) be as above. Write
$
 b_f(z)=\frac{T_fk_z(z)}{k_z(z)}.
$
There exists a sufficiently small fixed \(r>0\) such that
\[
 M_{q,r}f(z)
 \le C\bigl(E_q(z)+M_{q,r}b_f(z)+M_{q,r}E_q(z)\bigr).
\]
Changing the radii in the averaging terms by fixed factors, if needed,
does not affect any ensuing boundedness, vanishing, or \(L^s\) test.
In particular, for \(s>q\),
\[
 \|M_{q,r}f\|_s
 \le C\bigl(\|b_f\|_s+\|G_{q,R}f\|_s\bigr).
\]
The analogous supremum estimate holds. If \(f\in\VDA^q\), then
\[
 \limsup_{|z|\to\infty}M_{q,r}f(z)
 \le C\limsup_{|z|\to\infty}|b_f(z)|.
\]
\end{lemma}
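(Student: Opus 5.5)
The plan is to localize the identity $fk_z=T_fk_z+H_fk_z$, valid for $f\in\Symb$, and to read off the holomorphic part near $z$ from the Berezin transform at nearby points. Choose $r\le r_0/2$, where $r_0$ is the radius in the kernel lower bound $|k_z(w)|e^{-\varphi(w)}\ge c_0$ for $|w-z|<r_0$. Then $k_z$ has no zeros on $B(z,2r)$. Hence
\[
 u_z:=\frac{T_fk_z}{k_z}\in\mathcal H(B(z,2r)),\qquad
 f=u_z+\frac{H_fk_z}{k_z}\quad\text{a.e. on }B(z,2r).
\]
By the lower kernel bound,
\[
 \Bigl(\int_{B(z,2r)}\Bigl|\frac{H_fk_z}{k_z}\Bigr|^q dv\Bigr)^{1/q}
 \le c_0^{-1}\|H_fk_z\|_{q,\varphi}=c_0^{-1}E_q(z).
\]
By Minkowski's inequality, it therefore suffices to bound $\bigl(\avint_{B(z,r)}|u_z|^q\,dv\bigr)^{1/q}$ by $C\bigl(E_q(z)+M_{q,r}b_f(z)+M_{q,r}E_q(z)\bigr)$, possibly after shrinking the radius by a fixed factor.

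\textbf{Key step (comparison of $u_z$ with $b_f$).} Fix $w\in B(z,r)$. By definition $u_w(w)=T_fk_w(w)/k_w(w)=b_f(w)$. On the overlap $B(z,2r)\cap B(w,2r)\supset B(w,r)$ both representations of $f$ hold. Hence the holomorphic function $u_z-u_w$ equals $H_fk_w/k_w-H_fk_z/k_z$ a.e. there. Its $L^q$ norm on $B(w,r)$ is therefore at most $C(E_q(z)+E_q(w))$, using the lower kernel bound again; the factors $e^{\pm\varphi}$ are absorbed by that bound. The holomorphic submean inequality on $B(w,r)$ then gives
\[
 |u_z(w)|\le |b_f(w)|+C\bigl(E_q(z)+E_q(w)\bigr),\qquad w\in B(z,r).
\]
Taking the $L^q$ average over $w\in B(z,r)$ gives the desired bound, with the terms $M_{q,r}b_f(z)$ and $M_{q,r}E_q(z)$. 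If $E_q(z)=\infty$ there is nothing to prove. I expect this step to be the main obstacle. One must check that the a.e. identities on two different balls may be combined, and that no boundedness of $T_f$ is used. Both points hold because $T_fk_z=P(fk_z)$ is an entire function for each $z$.

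\textbf{Consequences.} For $s>q$, integrate the pointwise bound in $L^s$. By Tonelli's theorem, $\|M_{q,r}v\|_s\le C\|v\|_s$ for $v=b_f$ and $v=E_q$. Lemma~\ref{lem:mixed-E} gives $\|E_q\|_s\le C\|G_{q,R}f\|_s$. The same route, using the local supremum and $\|E_q\|_\infty\le C\|G_{q,R}f\|_\infty$, gives the supremum estimate. Lemma~\ref{lem:mean}, via its finite-covering argument, removes the small-radius restriction on $r$ for every fixed radius. If $f\in\VDA^q$, then Lemma~\ref{lem:mixed-E} gives $E_q(z)\to0$, so $M_{q,r}E_q(z)\to0$. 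Moreover, $\limsup_{|z|\to\infty}M_{q,r}b_f(z)\le\limsup_{|z|\to\infty}|b_f(z)|$. Together these yield the limsup estimate.
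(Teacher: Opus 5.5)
Your proposal is correct and follows essentially the same route as the paper. Both arguments use the holomorphic quotients $T_fk_z/k_z$ on a small ball where the kernel lower bound holds, compare the quotients for $z$ and $w$ on a common subball via the submean inequality to get $|Q_z(w)-b_f(w)|\le C(E_q(z)+E_q(w))$, and finish with Minkowski and Lemma~\ref{lem:mixed-E}. The only differences are cosmetic: your radii $2r$ and $r$ replace the paper's $8r$ and $2r$, and you get $\|M_{q,r}v\|_s\le C\|v\|_s$ from Tonelli where the paper uses Young's inequality (Tonelli alone is not enough here; you also need Jensen, since $s/q>1$).
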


\begin{proof}
Choose \(r>0\) so small that
\[
 |k_z(w)|e^{-\varphi(w)}\ge c_0,
 \qquad w\in B(z,8r).
\]
Then \(k_z\) has no zeros on \(B(z,8r)\), and
\[
 Q_z(w)=\frac{T_fk_z(w)}{k_z(w)},
 \qquad w\in B(z,8r),
\]
is holomorphic with \(Q_z(z)=b_f(z)\). Since
$
 (f-Q_z)k_z=H_fk_z,
$
the local kernel lower bound gives
$
 \|f-Q_z\|_{L^q(B(z,8r))}\le C E_q(z).
$

If \(w\in B(z,r)\), then
$
 B(w,2r)\subset B(z,8r)\cap B(w,8r).
$
The holomorphic submean inequality therefore gives
\[
 |Q_z(w)-b_f(w)|
 \le C\left(
  \fint_{B(w,2r)}|Q_z-Q_w|^q\,dv
 \right)^{1/q}
 \le C\bigl(E_q(z)+E_q(w)\bigr).
\]
Taking the local \(L^q\) mean over \(B(z,r)\), and using
Minkowski's inequality, yields
\[
 M_{q,r}f(z)
 \le C\bigl(E_q(z)+M_{q,r}b_f(z)+M_{q,r}E_q(z)\bigr).
\]

For \(s>q\), Young's inequality applied to the normalized average of
\(|h|^q\) gives
$
 \|M_{q,r}h\|_s\le C\|h\|_s.
$
Hence
\[
 \|M_{q,r}f\|_s
 \le C\bigl(\|b_f\|_s+\|E_q\|_s\bigr)
 \le C\bigl(\|b_f\|_s+\|G_{q,R}f\|_s\bigr),
\]
where the last inequality follows from
Lemma~\ref{lem:mixed-E}. The supremum estimate is identical.

Finally, if \(f\in\VDA^q\), then Lemma~\ref{lem:mixed-E} gives
\(E_q(z)\to0\). Since \(r\) is fixed, also
\(M_{q,r}E_q(z)\to0\). Thus
\[
 \limsup_{|z|\to\infty}M_{q,r}f(z)
 \le C\limsup_{|z|\to\infty}M_{q,r}b_f(z).
\]
Moreover,
\[
 M_{q,r}b_f(z)
 \le \sup_{w\in B(z,r)}|b_f(w)|,
\]
so
\[
 \limsup_{|z|\to\infty}M_{q,r}b_f(z)
 \le \limsup_{|z|\to\infty}|b_f(z)|.
\]
This proves the last assertion.
\end{proof}

\subsection{Berezin diagonals and compact evaluations}
\begin{lemma}
\label{lem:mixed-diagonal}
Let $1\le q<p<\infty$, put $s=pq/(p-q)$, and let
$A:F^p_\varphi\to F^q_\varphi$ be bounded. Define
\[
 d_A(z)=\frac{(Ak_z)(z)}{K(z,z)^{1/2}}.
\]
Then $d_A\in L^s(dv)$ and $\|d_A\|_s\le C\|A\|$.
\end{lemma}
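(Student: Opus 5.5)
We discretize and use duality. Fix a lattice $\{z_j\}=\delta\mathbb Z^{2n}$ with $\delta$ small, and partition $\C^n$ into the cubes $Q_j$ centred at $z_j$. The aim is to show
\[
 \sum_j\sup_{z\in Q_j}|d_A(z)|^s\lesssim\|A\|^s .
\]
This is enough, since $\|d_A\|_s^s\le\sum_j|Q_j|\sup_{Q_j}|d_A|^s$.

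\textbf{Pointwise bound.} First we bound $d_A(z)$ locally. For $|z-z_j|<\delta$, the weighted submean estimate gives
\[
 |d_A(z)|\lesssim|(Ak_z)(z)|e^{-\varphi(z)}\lesssim\Bigl(\int_{B(z_j,2\delta)}|Ak_z|^qe^{-q\varphi}\,dv\Bigr)^{1/q}.
\]
This is still awkward, because the test function $k_z$ varies with $z$. The first fix is to pick, for each $j$, a point $w_j\in\overline{Q_j}$ that almost maximizes $|d_A|$ on $Q_j$. This point exists because $d_A$ is continuous: $z\mapsto k_z$ is norm continuous into $F^p_\varphi$, and point evaluation is locally uniformly bounded. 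With this choice it suffices to bound
\[
 \sum_j|d_A(w_j)|^s .
\]
The points $w_j$ are $\delta$-separated in the sense that each ball of radius $\delta$ contains at most $C$ of them, so they form a finite union of separated sequences.

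\textbf{Duality step.} Let $\lambda=(\lambda_j)\in\ell^p$ be arbitrary, and let $(\epsilon_j)$ be Rademacher signs. Put
\[
 g_\epsilon=\sum_j\lambda_j\epsilon_jk_{w_j}.
\]
The standard atomic estimate for separated sequences, which follows from the exponential kernel decay and the submean estimate, gives $\|g_\epsilon\|_{p,\varphi}\lesssim\|\lambda\|_{\ell^p}$. Hence
\[
 \|Ag_\epsilon\|_{q,\varphi}\le\|A\|\,\|\lambda\|_p .
\]
We average $\|Ag_\epsilon\|^q_{q,\varphi}$ over $\epsilon$. Khintchine's inequality gives
\[
 \int\Bigl(\sum_j|\lambda_j|^2|Ak_{w_j}|^2\Bigr)^{q/2}e^{-q\varphi}\,dv\lesssim\|A\|^q\|\lambda\|_p^q .
\]
Next we restrict the integral over the whole space to $\bigcup_jB(w_j,\delta)$ and keep only the $j$-th term on $B(w_j,\delta)$. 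This is legitimate because the balls $B(w_j,\delta)$ overlap boundedly. Applying the submean estimate at $w_j$ then yields
\[
 \sum_j|\lambda_j|^q|d_A(w_j)|^q\lesssim\|A\|^q\|\lambda\|_p^q .
\]

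\textbf{Conclusion.} Since $\lambda\in\ell^p$ is arbitrary, the map $\lambda\mapsto(\lambda_jd_A(w_j))_j$ is bounded from $\ell^p$ to $\ell^q$. Thus $(|d_A(w_j)|^q)$ is a multiplier from $\ell^{p/q}$ into $\ell^1$. Duality between $\ell^{p/q}$ and $\ell^{(p/q)'}$ then gives
\[
 (|d_A(w_j)|^q)\in\ell^{p/(p-q)} .
\]
Raising to the power $1/q$ gives $(d_A(w_j))\in\ell^s$ with norm $\lesssim\|A\|$, and the continuous bound follows from the pointwise reduction.

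\textbf{Main obstacle.} The delicate step is the combination of Khintchine's inequality with the localization. When $q<2$ we have $\sum|\cdot|^2$ raised to the power $q/2$, and the localization needs
\[
 \Bigl(\sum_j a_j^2\Bigr)^{q/2}\ge a_{j_0}^q\quad\text{for each single index } j_0 .
\]
This inequality is trivial, so the pointwise lower bound on each ball holds. The second concern is the bounded overlap of the balls $B(w_j,\delta)$, which we handle by splitting the $w_j$ into finitely many subfamilies that are $3\delta$-separated. We expect these to go through.
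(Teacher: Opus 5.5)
Your argument is correct, but it reaches the key diagonal estimate by a different route than the paper. The paper uses two operators on the translated lattice $\delta\mathbb Z^{2n}+v$: a synthesis operator $S_v\colon\ell^p\to F^p_\varphi$ and a sampling operator $U_v\colon F^q_\varphi\to\ell^q$. It compresses $U_vAS_v$ to finite matrices $B_J$ and extracts the diagonal by the identity $\operatorname{diag}(B_J)=\mathbb E_\varepsilon D_\varepsilon B_JD_\varepsilon$. This step needs only the triangle inequality and the fact that sign matrices are isometries. It then identifies the $\ell^p\to\ell^q$ norm of a diagonal with its $\ell^s$ norm, using H\"older's inequality and the explicit extremal sequence $c_j=|d_j|^{s/p}$.

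You instead test $A$ only on randomized kernel sums. The lower Khintchine inequality and localization through the submean estimate (Luecking's technique) then give the same multiplier bound $\sum_j|\lambda_j|^q|d_A(w_j)|^q\lesssim\|A\|^q\|\lambda\|_{\ell^p}^q$. In your version the submean step replaces the sampling operator, and $\ell^{p/q}$--$\ell^{p/(p-q)}$ duality replaces the explicit extremal sequence. So the paper's diagonal extraction is slightly more elementary, since it avoids Khintchine, while yours avoids constructing $U_v$.

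The passage from the discrete bound to the continuous norm also differs. The paper proves the lattice bound uniformly in $v$ and integrates over the fundamental cube, which recovers $\int|d_A|^s$ exactly. You pick a near-maximizer $w_j$ in each cube. That only requires $\sup_{Q_j}|d_A|<\infty$, which holds since $|d_A|\le C\|A\|$, so continuity of $d_A$ is needed only for measurability.

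Two small remarks. First, take $\lambda$ finitely supported. Then $Ag_\epsilon=\sum_j\lambda_j\epsilon_jAk_{w_j}$ holds by linearity and Khintchine is applied to finite sums, which is all the duality step uses. Second, bounded overlap of the balls $B(w_j,\delta)$ already gives $\sum_j\int_{B(w_j,\delta)}F\,dv\le N\int F\,dv$. So no splitting into separated subfamilies is needed, and the step you flag as the main obstacle is routine.
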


\begin{proof}
Fix $\delta>0$. Let $\Lambda=\delta\mathbb Z^{2n}$, and let $Q$
be a fundamental cube. For $j\in\mathbb Z^{2n}$ and $v\in Q$, set
$\lambda_j=\delta j$ and $z_j=\lambda_j+v$. We first prove estimates
whose constants are independent of $v$.

For finitely supported $c$, set $S_vc=\sum_jc_jk_{z_j}$. Kernel decay gives
\[
 |S_vc(z)|e^{-\varphi(z)}\le C\sum_j|c_j|e^{-c_0|z-z_j|}.
\]
On the lattice cube associated with $z_i$, the right side is bounded by
$C\sum_j|c_j|e^{-c_1|i-j|}$. Since
$\{e^{-c_1|j|}\}\in\ell^1$, discrete Young's inequality and integration
over the cubes give
\[
 \norm{S_vc}_{p,\varphi}\le C\norm{c}_{\ell^p}.
\]
Thus $S_v:\ell^p\to F^p_\varphi$ is bounded uniformly in $v$.

Define $U_vg=\{g(z_j)/K(z_j,z_j)^{1/2}\}_j$. Using
$K(z,z)^{1/2}\asymp e^{\varphi(z)}$, the weighted submean estimate,
and bounded overlap, we obtain
\[
 \begin{split}
 \norm{U_vg}_{\ell^q}^q
 &\le C\sum_j\int_{B(z_j,\delta)}|g(z)|^qe^{-q\varphi(z)}\,dv(z)\\
 &\le C\norm{g}_{q,\varphi}^q.
 \end{split}
\]
Hence $U_v:F^q_\varphi\to\ell^q$ is also bounded uniformly in $v$.

For a finite index set $J$, compress $U_vAS_v$ to a matrix
$B_J:\ell^p(J)\to\ell^q(J)$. Its norm is at most $C\norm{A}$, and
its diagonal entries are $d_j=d_A(z_j)$. Let $D_\varepsilon$ be
multiplication by independent random signs. Then
\[
 \operatorname{diag}(B_J)=\mathbb E_\varepsilon
 D_\varepsilon B_JD_\varepsilon.
\]
Since the sign maps are isometries,
\[
 \norm{\operatorname{diag}(B_J)}_{\ell^p\to\ell^q}
 \le C\norm{A}.
\]
The norm of the diagonal map $c\mapsto\{d_jc_j\}$ is
\begin{equation}\label{eq:diagmultiplier}
 \norm{\operatorname{diag}(B_J)}_{\ell^p\to\ell^q}
    =\left(\sum_{j\in J}|d_j|^s\right)^{1/s}.
\end{equation}
Indeed, $1/q=1/p+1/s$ gives the upper estimate by H\"older's inequality.
For the reverse estimate, take $c_j=|d_j|^{s/p}$. Since
$q+sq/p=s$, the quotient of the two norms is the right side of
\eqref{eq:diagmultiplier}. The zero case is immediate.

Letting $J$ increase to $\mathbb Z^{2n}$ and using monotone convergence,
\[
 \sum_j|d_A(\lambda_j+v)|^s\le C\norm{A}^s
\]
uniformly in $v\in Q$. Therefore
\[
 \int_{\C^n}|d_A(z)|^s\,dv(z)
 =\int_Q\sum_j|d_A(\lambda_j+v)|^s\,dv(v)
 \le C|Q|\norm{A}^s.
\]
Finally, $d_A$ is measurable; in fact, $z\mapsto k_z$ is continuous
in $F^p_\varphi$, and normalized point evaluations vary continuously.
Thus $\|d_A\|_s\le C\|A\|$.
\end{proof}

\begin{lemma}
\label{lem:mixed-compactdiag}
For $1\le p,q<\infty$, a bounded map $A:F^p_\varphi\to F^q_\varphi$
satisfies $\|d_A\|_\infty\le C\|A\|$. If $A$ is compact, then
$d_A(z)\to0$. Consequently,
\[
 \limsup_{|z|\to\infty}|d_A(z)|\le C\|A\|_{\mathrm{ess}}.
\]
\end{lemma}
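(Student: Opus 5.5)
The plan is to write \(d_A(z)\) as a pairing of \(Ak_z\) against a normalized point evaluation, bound this pairing for the sup estimate, and then use weak convergence of \(k_z\) to zero for compact \(A\).

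\emph{Sup bound.} Fix \(z\). By the weighted submean estimate with exponent \(q\) and the bound \(K(z,z)^{1/2}\asymp e^{\varphi(z)}\), we have
\[
 |d_A(z)|\le C|(Ak_z)(z)|e^{-\varphi(z)}
 \le C\Bigl(\int_{B(z,1)}|Ak_z|^qe^{-q\varphi}\,dv\Bigr)^{1/q}
 \le C\norm{Ak_z}_{q,\varphi}.
\]
Since \(\norm{k_z}_{p,\varphi}\asymp1\), the right side is at most \(C\norm{A}\). This gives \(\|d_A\|_\infty\le C\|A\|\).

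\emph{Compact case.} I will show that \(k_z\to0\) weakly in \(F^p_\varphi\) as \(|z|\to\infty\) (for \(p=1\), in the weak-\(*\)-type sense that compact operators convert to norm convergence). For \(1<p<\infty\), the family \(\{k_z\}\) is bounded in the reflexive space \(F^p_\varphi\). For each \(w\), \(k_z(w)\to0\), because \(|k_z(w)|\le Ce^{\varphi(w)}e^{-c|z-w|}\). Point evaluations are continuous, so any weak limit point is the zero function. Hence \(k_z\to0\) weakly, and compactness of \(A\) gives \(\norm{Ak_z}_{q,\varphi}\to0\). The displayed inequality then yields \(d_A(z)\to0\).

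For \(p=1\), which is the main obstacle, I will avoid weak convergence and instead approximate \(A\) in norm by finite-rank operators. Since \(F^q_\varphi\) has a Schauder basis or at least the approximation property (it is complemented in \(L^q_\varphi\) via \(P\)), I will take \(A\) close to \(\sum_i \langle\cdot,\ell_i\rangle g_i\) with \(\ell_i\in(F^1_\varphi)^*\). It then suffices to check that \(\ell(k_z)\to0\) for each fixed \(\ell\in(F^1_\varphi)^*\). I will represent \(\ell\) through the \(F^2_\varphi\) pairing by a function \(u\) with \(|u|e^{-\varphi}\) bounded. Then \(\ell(k_z)=\overline{u(z)}/K(z,z)^{1/2}\) up to normalization, and this need not tend to zero in general.

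If the approximation approach runs into this difficulty, I will use a different route. Compactness of \(A\) means \(A(\text{ball})\) is relatively compact in \(F^q_\varphi\). For any relatively compact set \(\mathcal K\subset F^q_\varphi\), uniform integrability gives
\[
 \sup_{g\in\mathcal K}\int_{|w|>R}|g|^qe^{-q\varphi}\,dv\to0 \qquad (R\to\infty),
\]
by approximating \(\mathcal K\) with a finite \(\varepsilon\)-net. Since \(Ak_z\in\mathcal K\), taking \(|z|>R+1\) in the submean bound gives
\[
 |d_A(z)|\le C\Bigl(\int_{B(z,1)}|Ak_z|^qe^{-q\varphi}\,dv\Bigr)^{1/q}\le C\varepsilon .
\]
This argument is uniform in \(p\) and does not need weak convergence at all, so I would actually use it for all \(p\).

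\emph{Essential norm.} For any compact \(L\), the first two parts give
\[
 \limsup_{|z|\to\infty}|d_A(z)|\le\limsup_{|z|\to\infty}|d_{A-L}(z)|+0\le C\norm{A-L}.
\]
Here I use that \(d\) is linear in the operator. Taking the infimum over compact \(L\) gives \(\limsup_{|z|\to\infty}|d_A(z)|\le C\|A\|_{\mathrm{ess}}\).
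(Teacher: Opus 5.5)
Your final argument is correct and is essentially the paper's proof. You get the sup bound from the submean estimate. In the compact case you use a finite $\varepsilon$-net for the relatively compact set $A(\text{ball})\subset F^q_\varphi$ to make $\int_{B(z,1)}|Ak_z|^qe^{-q\varphi}\,dv$ uniformly small. The paper phrases this same step as uniform convergence of the normalized evaluations $\ell_z$ to zero on compact subsets of $F^q_\varphi$, and it also notes that no weak convergence of $k_z$ is needed. Your weak-convergence and finite-rank detours can be dropped. In particular, your parenthetical claim that compact operators turn a ``weak-$*$-type'' convergence into norm convergence when $p=1$ is false in general.
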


\begin{proof}
The kernels have uniformly bounded $F^p_\varphi$ norms, and the
functionals $\ell_z(g)=g(z)/K(z,z)^{1/2}$ have uniformly bounded
norms on $F^q_\varphi$. Hence
\[
 |d_A(z)|=|\ell_z(Ak_z)|\le C\|A\|.
\]
For each fixed $g\in F^q_\varphi$, the diagonal estimate
$K(z,z)^{1/2}\asymp e^{\varphi(z)}$ and the weighted submean inequality give
\[
 |\ell_z(g)|^q
 \le C\int_{B(z,r)}|g(w)|^qe^{-q\varphi(w)}\,dv(w)\to0
\]
as $|z|\to\infty$.

If $C\subset F^q_\varphi$ is compact, choose a finite $\epsilon$-net
$g_1,\ldots,g_N$ in $C$. Since $\|\ell_z\|\le C_0$,
\[
 \sup_{g\in C}|\ell_z(g)|
 \le \max_{1\le j\le N}|\ell_z(g_j)|+C_0\epsilon.
\]
Thus the convergence is uniform on compact subsets of $F^q_\varphi$.
If $A$ is compact, the closure of $\{Ak_z:z\in\mathbb C^n\}$ is
compact, and hence $d_A(z)\to0$.

Finally, for any compact $L:F^p_\varphi\to F^q_\varphi$,
$d_A=d_{A-L}+d_L$ and $d_L(z)\to0$. Therefore
\[
 \limsup_{|z|\to\infty}|d_A(z)|
 \le C\|A-L\|.
\]
Taking the infimum over compact $L$ gives the essential norm estimate.
This argument also covers $p=1$ and $q=1$ and does not use weak
convergence of the kernels.
\end{proof}

\section{Mixed mappings and compactness}\label{sec:mixedproof}
\begin{proof}[Proof of Theorem~\ref{thm:mixed}]
Suppose first that $p\le q$ and $f\in\BDA^q$. By
Proposition~\ref{prop:background}, $H_f:F^p_\varphi\to L^q_\varphi$
is bounded. On the common initial domain,
\begin{equation}\label{eq:column}
 M_f=JT_f+H_f,
\end{equation}
where $J:F^q_\varphi\hookrightarrow L^q_\varphi$ is the inclusion.
Thus boundedness of $T_f$ implies boundedness of $M_f$, while the
converse follows from $T_f=PM_f$.

To identify the bounded extension in \eqref{eq:column} with
multiplication, take $g_j\in\Gamma$ with $g_j\to g$ in
$F^p_\varphi$. Then $g_j\to g$ locally uniformly, while
$f g_j$ converges in $L^q_\varphi$. A subsequence converges almost
everywhere, and its limit is $fg$. Hence Proposition~\ref{prop:background}
gives
\[
 \norm{M_{q,r}f}_\infty
 \le C\bigl(\norm{T_f}_{p\to q}+\norm{G_{q,r}f}_\infty\bigr).
\]
Conversely, $T_f=PM_f$ and $G_{q,r}f\le M_{q,r}f$ give the reverse
estimate.

If $T_f$ is bounded, Lemma~\ref{lem:mixed-compactdiag} gives
$b_f\in L^\infty$. Conversely, boundedness of $b_f$ and
$G_{q,r}f$ gives boundedness of $M_{q,r}f$ by
Lemmas~\ref{lem:mixed-E} and \ref{lem:mixed-recovery}.
Lemma~\ref{lem:mean} gives the complex-average test. This proves
\eqref{eq:boundednorm}.

Next assume $f\in\VDA^q$. Then $H_f$ is compact. Hence
\eqref{eq:column} and Proposition~\ref{prop:background} show that
$T_f$ is compact if and only if $M_{q,r}f(z)\to0$.
If $T_f$ is compact, Lemma~\ref{lem:mixed-compactdiag} gives
$b_f(z)\to0$. Conversely, $b_f(z)\to0$ and
Lemma~\ref{lem:mixed-E} give $E_q(z)\to0$, so
Lemma~\ref{lem:mixed-recovery} yields $M_{q,r}f(z)\to0$.
Lemma~\ref{lem:mean} gives the complex-average criterion.

For the essential norm, let $f_N=f\mathbf1_{B(0,N)}$. The
multiplication operator associated with $f_N$ is compact from
$F^p_\varphi$ to $L^q_\varphi$: bounded sequences have locally
uniformly convergent subsequences, and $f\in L^q(B(0,N))$.
Thus $T_{f_N}$ is compact. Since
$M_{q,r}(f-f_N)(z)=0$ when $|z|\le N-r$,
Proposition~\ref{prop:background} gives
\[
 \norm{T_{f-f_N}}_{p\to q}
 \le C\sup_{|z|>N-r}M_{q,r}f(z).
\]
Letting $N\to\infty$ gives the essential-norm upper estimate by
$\limsup M_{q,r}f$. Lemma~\ref{lem:mixed-compactdiag} gives the
lower estimate by $\limsup|b_f|$, while under VDA,
Lemma~\ref{lem:mixed-recovery} gives the reverse comparison.
Finally, $|a_rf|\le M_{q,r}f$ and Lemma~\ref{lem:mean} complete
\eqref{eq:ess}.

Finally, let $q<p$ and $s=pq/(p-q)$. Then $s>q$. Under
$f\in\IDA^{s,q}$, Proposition~\ref{prop:background} shows that
$H_f$ is bounded and compact. Hence \eqref{eq:column} and the
multiplication criterion give
\[
 T_f\text{ bounded}
 \quad\Longleftrightarrow\quad
 M_{q,r}f\in L^s,
\]
and in this case $M_f$, and therefore $T_f$, is compact.
Conversely, compactness implies boundedness. The same identities,
together with $G_{q,r}f\le M_{q,r}f$, give the first norm comparison
in \eqref{eq:descendingnorm}.

If $T_f$ is bounded, Lemma~\ref{lem:mixed-diagonal} gives
$b_f\in L^s$ and
$\norm{b_f}_s\le C\norm{T_f}_{p\to q}$. Conversely,
$b_f,G_{q,r}f\in L^s$ imply $M_{q,r}f\in L^s$ by
Lemma~\ref{lem:mixed-recovery}; here Young's inequality is used with
$s/q>1$. Lemma~\ref{lem:mean} gives the complex-average equivalence
and the remaining norm comparisons.
\end{proof}

\section{Schatten--Lorentz ideals and the local symbol exponent}\label{sec:schattenproof}
We use the following standard singular-value facts; see \cite{Simon}.
For compact operators \(A,B\), bounded operators \(X,Y\), and
\(\alpha>0\),
\[
 s_{j+k-1}(A+B)\le s_j(A)+s_k(B),\qquad
 s_j(XAY)\le \|X\|\,\|Y\|\,s_j(A),
\]
and
$s_j(|A|^\alpha)=s_j(A)^\alpha.$
These give the Ky Fan estimates, the ideal property, and the
singular-value power identities used below. Inner products are linear
in the first variable.

We first prove the positive Toeplitz criterion. We then compare fixed
radii and construct a decomposition that preserves the initial domain.
The Hankel estimate and the local-size estimates complete the proof of
Theorem~\ref{thm:lorentz}. Its strong Schatten case follows by taking
$\tau=\rho$.

\subsection{Positive operators in Schatten--Lorentz ideals}
Positive local-mass estimates for radial Fock-type spaces appear in
\cite{HWZ}. Here we prove the fixed-radius version under
\eqref{eq:weight}. The proof uses the strong positive Toeplitz theorem
in \cite{IVW}, a threshold decomposition, and Hardy's inequality.
It also gives the counting estimates used below.

\begin{lemma}
\label{lem:positive-lorentz}
Assume \eqref{eq:weight}. Let $\mu$ be a positive locally finite Borel
measure. Put
\[
 \widetilde\mu(z)=\int_{\C^n}|k_z(w)|^2e^{-2\varphi(w)}\,d\mu(w).
\]
The notation $T_\mu$ refers to the bounded operator, when it exists,
represented by the positive form
\[
 \langle T_\mu u,v\rangle_\varphi
 =\int_{\C^n}u(w)\overline{v(w)}e^{-2\varphi(w)}\,d\mu(w).
\]
The Berezin integral is initially interpreted in $[0,\infty]$.
Existence of a bounded $T_\mu$ means that the displayed form is finite
for all $u,v\in F^2_\varphi$ and bounded on
$F^2_\varphi\times F^2_\varphi$. Local finiteness alone does not
ensure this property.

Fix $0<a<\infty$, $0<b\le\infty$, and $r>0$. Then
\[
 T_\mu\in\mathcal S_{a,b}
 \quad\Longleftrightarrow\quad
 \mu(B(\cdot,r))\in L^{a,b}
 \quad\Longleftrightarrow\quad
 \widetilde\mu\in L^{a,b},
\]
and the three quasi-norms are comparable.

More precisely, partition $\mathbb R^{2n}$ into half-open cubes $Q_j$
of a fixed sufficiently small side length, with centers $z_j$, and put
$m_j=\mu(Q_j)$. If $T_\mu$ is compact, then
\begin{equation}\label{eq:positivecountlower}
 \#\{j:m_j>t\}\le L N(ct;T_\mu),\qquad t>0.
\end{equation}
If $(m_j)\in\ell^{a,b}$ and $0<\eta<\min\{a,1\}$, then
\begin{equation}\label{eq:positivecountupper}
 N(Ct;T_\mu)\le C_\eta t^{-\eta}
 \sum_{m_j>t}m_j^\eta,
 \qquad t>0.
\end{equation}
Writing $(m_j^*)$ for the decreasing rearrangement of $(m_j)$, if
$b<\infty$ and $0<\eta<\min\{a,b,1\}$, then
\begin{equation}\label{eq:positive-rearrangement-upper}
 s_k(T_\mu)\le C_\eta
 \left(\frac1k\sum_{j=1}^k(m_j^*)^\eta\right)^{1/\eta},
 \qquad k\ge1.
\end{equation}
For $b=\infty$, the same estimate holds for
$0<\eta<\min\{a,1\}$.
Here $N(t;A)=\#\{k:s_k(A)>t\}$. The constants do not depend on
$\mu$ or $t$.
\end{lemma}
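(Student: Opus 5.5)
The plan is to reduce everything to the discrete masses $m_j=\mu(Q_j)$ and to two counting estimates, \eqref{eq:positivecountlower} and \eqref{eq:positivecountupper}. The Lorentz equivalences are then obtained from these by Hardy-type inequalities.

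\emph{Comparability of the scalar quantities.} Since $|k_z(w)|^2e^{-2\varphi(w)}\ge c_0^2$ on $B(z,r_0)$, we have $\mu(B(z,r_0))\lesssim\widetilde\mu(z)$. Conversely, the decay $|k_z(w)|^2e^{-2\varphi(w)}\le Ce^{-2c|z-w|}$ gives
\[
\widetilde\mu(z)\lesssim\sum_j e^{-c|z-z_j|}m_j .
\]
The $L^{a,b}$ quasi-norms of $\mu(B(\cdot,r))$ for different $r$, and the $\ell^{a,b}$ quasi-norm of $(m_j)$, are all comparable by finite covering, as in Lemma~\ref{lem:mean}. Here the relevant point is that rearrangements of functions that are essentially constant on cubes match those of sequences. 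To bound $\widetilde\mu$ by $(m_j)$ in $L^{a,b}$ I would pass to a small power $\eta<\min\{a,b,1\}$. Using $(\sum x_j)^\eta\le\sum x_j^\eta$, we get
\[
\widetilde\mu^\eta\lesssim\sum_j e^{-c\eta|z-z_j|}m_j^\eta ,
\]
which is a convolution of $(m_j^\eta)$ with an $\ell^1$ kernel. Convolution with an $\ell^1$ kernel is bounded on the Banach space $\ell^{a/\eta,b/\eta}$, so the bound follows.

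\emph{Lower count \eqref{eq:positivecountlower}.} I would split the cubes into $L$ sparse subfamilies, each separated by a large distance $R$. For one subfamily $\{j:m_j>t\}$, consider $V=\spanop\{k_{z_j}\}$ over that subfamily. By the sampling lemma with the maps $S_v$ and $U_v$, for $R$ large these kernels form a Riesz sequence. The compression of $T_\mu$ to $V$ is then bounded below by its diagonal minus small off-diagonal terms. The diagonal entries are $\langle T_\mu k_{z_j},k_{z_j}\rangle\ge c_0^2m_j$ when the side length is below $r_0$. The off-diagonal entries are controlled by positivity: by the Schur test with $e^{-cR}$ decay after restricting $\mu$ to the chosen cubes, using $T_\mu\ge T_{\mu|_{\cup Q_j}}$. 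The min-max principle then gives $N(ct;T_\mu)\ge\#$ of that subfamily.

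\emph{Upper bounds.} Decompose $\mu=\mu_{>t}+\mu_{\le t}$ according to whether $m_j>t$. Then $\|T_{\mu_{\le t}}\|\lesssim\sup_j m_j\le t$ by the Carleson bound. For $T_{\mu_{>t}}$, the $\mathcal S_\eta$ quasi-norm with $\eta\le1$ satisfies
\[
\|T_{\mu_{>t}}\|_{\mathcal S_\eta}^\eta\lesssim\sum_{m_j>t}m_j^\eta ,
\]
since it is the sum of rank-one-like pieces $T_{\mu|Q_j}$, each with $\mathcal S_\eta$ quasi-norm $\lesssim m_j$ (from the strong theorem of \cite{IVW}, or directly via $\eta$-subadditivity). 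Combining with the Chebyshev bound $N(t;A)\le t^{-\eta}\|A\|_{\mathcal S_\eta}^\eta$ and the Ky Fan inequality gives \eqref{eq:positivecountupper}. Choosing $t=m_k^*$ and splitting at index $k$ gives \eqref{eq:positive-rearrangement-upper} in the same way. Finally, $\ell^{a,b}$ membership of the singular values follows from \eqref{eq:positive-rearrangement-upper} by Hardy's inequality applied to averages of $(m_j^*)^\eta$ in $\ell^{a/\eta,b/\eta}$, which is valid because $a/\eta>1$. The reverse direction follows from \eqref{eq:positivecountlower}.

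I expect the main obstacle to be the lower count: making the off-diagonal interaction uniformly small requires the separation argument together with positivity, and the dependence on small cube size has to be tracked carefully.
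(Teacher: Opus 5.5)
Your overall architecture matches the paper's: the $L$-colouring into $R$-separated subfamilies, compression to the span of the selected kernels, and min--max for \eqref{eq:positivecountlower}; the threshold split $\mu=\mu_{\le t}+\mu_{>t}$ with the Carleson bound, the strong $\Sch_\eta$ theorem of \cite{IVW}, Ky Fan and Chebyshev for \eqref{eq:positivecountupper} and \eqref{eq:positive-rearrangement-upper}; Hardy's inequality in $\ell^{a/\eta,b/\eta}$; and the $\eta$-power convexification for $\widetilde\mu$. The step that fails as written is the Schur argument in the lower count. You keep diagonal entries $B_{ii}\ge c_0^2m_i$ and only restrict $\mu$ to the selected cubes. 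Then the off-diagonal entries are bounded only by $|B_{ij}|\le C\sum_{\ell\in J}m_\ell e^{-c|z_i-z_\ell|}e^{-c|z_j-z_\ell|}$. So the Schur row sums are of order $\varepsilon_R\sup_{\ell\in J}m_\ell$, not $\varepsilon_R t$ or $\varepsilon_R m_i$.

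The selected masses can range from just above $t$ up to $\sup_j m_j$, while $R$ is fixed and $t\downarrow0$. A heavy neighbour $Q_\ell$ then contributes to $B_{i\ell}$ a term as large as a fixed $R$-dependent multiple of $m_\ell$; for a point mass on the classical Fock space it is exactly of that size. That term can be far larger than $m_i$, so ``diagonal minus small off-diagonal'' gives no lower bound of order $t$. Weighting by $\operatorname{diag}(m)^{-1/2}$ does not rescue it, because the normalized entries contain the unbounded factor $\sqrt{m_\ell/m_i}$. The conclusion $B\ge ct\,I$ is true, but the mechanism you describe does not prove it.

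The missing idea is to equalize the masses before the Schur test, as the paper does. Set $d\nu=\sum_{j\in J}\frac{t}{m_j}\mathbf 1_{Q_j}\,d\mu$. Since $m_j>t$, we have $0\le\nu\le\mu$ and $\nu(Q_j)=t$ for all $j\in J$. Then $B=S^*T_\nu S$ satisfies $B_{ii}\ge c_1^2t$ and $\sup_i\sum_{j\ne i}|B_{ij}|\le Ct\,\varepsilon_R(2+\varepsilon_R)$, so $B\ge\frac{c_1^2t}{2}I$ for large $R$. Finally, $T_\mu\ge T_\nu$ transfers this bound to $T_\mu$.

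Equivalently, argue pointwise. First, $\langle T_\mu Sx,Sx\rangle_\varphi\ge t\sum_{\ell\in J}\inf_{Q_\ell}|Sx|^2e^{-2\varphi}$. Second, $\inf_{Q_\ell}|Sx|e^{-\varphi}\ge c_1|x_\ell|-C\sum_{j\ne\ell}|x_j|e^{-c|z_j-z_\ell|}$. Then apply Young's inequality.

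You also do not need a lower Riesz bound from a sampling lemma. Boundedness of $S$ together with $B\ge ct\,I$ already gives injectivity and $\langle T_\mu y,y\rangle_\varphi\ge ct\|S\|^{-2}\|y\|^2$ on the $|J|$-dimensional range. A smaller point concerns the upper bound. $\eta$-subadditivity handles the sum over cubes, but not the single-cube estimate $\|T_{\mu|_{Q_j}}\|_{\Sch_\eta}\lesssim m_j$ for $\eta<1$, since the trace only gives $\eta=1$. That step genuinely rests on \cite{IVW}, which can be applied directly to $\mu_{>t}$.
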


\begin{proof}
Choose the cubes so small that their diameter is less than $r$ and
the local kernel lower bound holds for any two points in one cube.
The kernel estimates give
\[
 |k_{z_i}(w)|e^{-\varphi(w)}
 \le C e^{-c_0|z_i-z_j|}
 \quad(w\in Q_j),\qquad
 |k_{z_i}(w)|e^{-\varphi(w)}
 \ge c_1\quad(w\in Q_i).
\]
The normalized kernels on any separated sublattice have a synthesis
operator $S:\ell^2\to F^2_\varphi$ with uniformly bounded norm.
These facts follow from \cite[Lemmas~2.1, 2.2 and 2.6]{IVW}.

Suppose $T_\mu$ is compact. Color the cube centers with $L$ colors so that centers of the same
color have pairwise distances at least $R$. The number $R$ will be
fixed below. Fix $t>0$ and a finite set $J$ of centers of one color
such that $m_j>t$. Define
\[
 d\nu=\sum_{j\in J}\frac{t}{m_j}\mathbf1_{Q_j}\,d\mu.
\]
Then $0\le\nu\le\mu$ and $\nu(Q_j)=t$ for $j\in J$. The finite matrix
$B=S^*T_\nu S$, where $Se_j=k_{z_j}$ for $j\in J$, satisfies
\[
 B_{ii}\ge c_1^2t,\qquad
 |B_{ij}|\le Ct\sum_{\ell\in J}
 e^{-c_0|z_i-z_\ell|}e^{-c_0|z_j-z_\ell|}.
\]
Put
\[
 \varepsilon_R=\sup_i\sum_{j\ne i}
 e^{-c_0|z_i-z_j|}.
\]
The supremum may be taken over the entire sublattice. A lattice
summation gives $\varepsilon_R\to0$ as $R\to\infty$. Separating the
term $\ell=i$ from the remaining terms gives
\[
 \sup_i\sum_{j\ne i}|B_{ij}|
 \le Ct\{\varepsilon_R+
 \varepsilon_R(1+\varepsilon_R)\}.
\]
Choose $R$ so that the right side is at most $c_1^2t/2$. The Schur
bound for the Hermitian off-diagonal matrix gives
\[
 B\ge \frac{c_1^2t}{2}I.
\]
Hence, for $x\in\ell^2(J)$,
\[
 \langle T_\mu Sx,Sx\rangle_\varphi
 \ge \langle T_\nu Sx,Sx\rangle_\varphi
 \ge \frac{c_1^2t}{2}\|x\|_2^2
 \ge \frac{c_1^2t}{2\|S\|^2}\|Sx\|_{2,\varphi}^2.
\]
The lower bound on $B$ makes $S$ injective. Its range has dimension
$|J|$, and $\|S\|$ is uniformly bounded. The min--max principle gives
$s_{|J|}(T_\mu)\ge c_2t$.
After decreasing the constant if necessary, the strict inequality in
the definition of $N$ gives
$|J|\le N(ct;T_\mu).$
Taking all finite subsets of each color and summing over the $L$
colors proves \eqref{eq:positivecountlower}.

For a nonnegative sequence $x=(x_j)$, put
$d_x(t)=\#\{j:x_j>t\}$.
For $b<\infty$, the distribution formula gives
\[
\|x\|_{\ell^{a,b}}^b
 \asymp\int_0^\infty
    [t\,d_x(t)^{1/a}]^b\,\frac{dt}{t},
\]
with the corresponding supremum formula when $b=\infty$.
Thus \eqref{eq:positivecountlower} implies
\begin{equation}\label{eq:positive-necessity-lorentz}
 \|(m_j)\|_{\ell^{a,b}}
 \le C\|T_\mu\|_{\mathcal S_{a,b}}.
\end{equation}

Assume $(m_j)\in\ell^{a,b}$. Its embedding into
$\ell^{a,\infty}$ gives uniformly bounded cube masses. Since each
fixed-radius ball meets only a bounded number of cubes, the positive
Carleson criterion defines a bounded $T_\mu$. The same bound and a
lattice summation give
\[
 \int_{\C^n}e^{-c|z-w|}\,d\mu(w)<\infty
 \qquad(c>0,\ z\in\C^n),
\]
which is the measure hypothesis used in \cite{IVW}.

For $t>0$, write
$\mu=\mu_{\le t}+\mu_{>t},$
where the two measures are obtained by restricting $\mu$ to cubes
with $m_j\le t$ and $m_j>t$, respectively. Since each fixed-radius
ball meets only finitely many cubes,
\[
 \sup_z\mu_{\le t}(B(z,r))\le Ct.
\]
Hence the positive Carleson estimate gives
$\|T_{\mu_{\le t}}\|\le C_0t.$

Let $0<\eta<\min\{a,1\}$. Since $\eta\le1$,
\[
 \left(\sum_\ell x_\ell\right)^\eta
 \le\sum_\ell x_\ell^\eta
 \qquad(x_\ell\ge0).
\]
Using again the bounded number of cubes meeting a fixed ball gives
\[
 \|\mu_{>t}(B(\cdot,r))\|_{L^\eta}^{\eta}
 \le C\sum_{m_j>t}m_j^\eta.
\]
The strong positive Toeplitz theorem
\cite[Theorem~2.7]{IVW} therefore yields
\begin{equation}\label{eq:positive-threshold-split}
 \|T_{\mu_{\le t}}\|\le C_0t,
 \qquad
 \|T_{\mu_{>t}}\|_{\mathcal S_\eta}^{\eta}
 \le C_\eta\sum_{m_j>t}m_j^\eta.
\end{equation}

The sum on the right is finite. Indeed, if
$W=\|(m_j)\|_{\ell^{a,\infty}},$
then $d_m(\lambda)\le W^a\lambda^{-a}$, and the layer-cake formula
gives, since $\eta<a$,
\[
 \sum_{m_j>t}m_j^\eta
 \le \frac{a}{a-\eta}W^a t^{\eta-a}.
\]
Thus $T_{\mu_{>t}}$ is compact, and
$T_\mu$ is its operator-norm limit as $t\downarrow0$. Moreover, the
singular-value perturbation inequality and
\eqref{eq:positive-threshold-split} give
\eqref{eq:positivecountupper}.

Let $x_j=m_j^*$. If $x_k=0$, then $x_j=0$ for $j\ge k$. The strong
positive theorem gives
\[
 s_k(T_\mu)
 \le k^{-1/\eta}\|T_\mu\|_{\mathcal S_\eta}
 \le Ck^{-1/\eta}
    \left(\sum_{j<k}x_j^\eta\right)^{1/\eta},
\]
so \eqref{eq:positive-rearrangement-upper} already follows.

Suppose now that $x_k>0$ and take $t=x_k$. At most $k-1$ cube masses
are strictly larger than $t$. By
\eqref{eq:positive-threshold-split},
\[
 \begin{split}
 s_k(T_\mu)
 &\le \|T_{\mu_{\le x_k}}\|
   +s_k(T_{\mu_{>x_k}})\\
 &\le Cx_k+Ck^{-1/\eta}
    \left(\sum_{j<k}x_j^\eta\right)^{1/\eta}\\
 &\le C\left(\frac1k\sum_{j\le k}x_j^\eta\right)^{1/\eta}.
 \end{split}
\]
This proves \eqref{eq:positive-rearrangement-upper}.

If $b<\infty$, choose
$0<\eta<\min\{a,b,1\}$
and put
\[
 p=\frac a\eta>1,\qquad s=\frac b\eta>1.
\]
The weighted discrete Hardy inequality
\cite[Chapter~2]{BennettSharpley},
\[
 \sum_{k\ge1}k^{s/p-1}
 \left(\frac1k\sum_{j\le k}y_j\right)^s
 \le C_{p,s}\sum_{k\ge1}k^{s/p-1}y_k^s,
\]
applied to $y_j=x_j^\eta$, gives
\begin{equation}\label{eq:positive-sufficiency-lorentz}
 \|T_\mu\|_{\mathcal S_{a,b}}
 \le C\|(m_j)\|_{\ell^{a,b}}.
\end{equation}
If $b=\infty$, choose $0<\eta<\min\{a,1\}$ and use
\[
 k^{1/a}
 \left(\frac1k\sum_{j\le k}x_j^\eta\right)^{1/\eta}
 \le C_{a,\eta}\|x\|_{\ell^{a,\infty}}.
\]
This proves \eqref{eq:positive-sufficiency-lorentz} also for
$b=\infty$. In particular, the argument covers $a<1$ and $b<1$.

For $z\in Q_j$, the choice of the cube size gives
$
 Q_j\subset B(z,r),
$
and hence
$
 \mu(B(z,r))\ge m_j.
$

Conversely, each ball $B(z,r)$ meets at most a fixed number of cubes.
Solidity and the finite-sum quasi-triangle inequality in Lorentz
spaces therefore give
\[
 \|(m_j)\|_{\ell^{a,b}}
 \asymp
 \|\mu(B(\cdot,r))\|_{L^{a,b}}.
\]

For $z\in Q_j$, the kernel bounds give
\[
 cm_j\le\widetilde\mu(z)
 \le C\sum_i m_i e^{-c|z_i-z_j|}.
\]
Choose
$
 0<\delta<\min\{1,a,b\}
$
when $b<\infty$, and
$
 0<\delta<\min\{1,a\}
$
when $b=\infty$. Since $\delta\le1$,
\[
 \left(\sum_i m_i e^{-c|z_i-z_j|}\right)^\delta
 \le C\sum_i m_i^\delta
    e^{-c\delta|z_i-z_j|}.
\]
Indexing the lattice cubes by $\mathbb Z^{2n}$, convolution by this
summable sequence is bounded with respect to an equivalent Banach
lattice norm on $\ell^{a/\delta,b/\delta}$, also when $b=\infty$.
Comparing $\widetilde\mu$ with the corresponding step function and
using the power identity for Lorentz quasi-norms gives
\[
 \|\widetilde\mu\|_{L^{a,b}}
 \asymp\|(m_j)\|_{\ell^{a,b}}.
\]
Together with \eqref{eq:positive-necessity-lorentz} and
\eqref{eq:positive-sufficiency-lorentz}, this completes the proof.
\end{proof}

\subsection{Local decomposition and Hankel estimates}

\begin{lemma}
\label{lem:lorentz-radius}
Let $1\le q<\infty$, $0<\rho<\infty$, $0<\tau\le\infty$,
$r,R>0$, and $f\in L^q_{\rm loc}(\C^n)$. Then
\begin{equation}\label{eq:lorentz-radius}
 \|G_{q,r}f\|_{\rho,\tau}\asymp\|G_{q,R}f\|_{\rho,\tau}.
\end{equation}
If these quantities are finite, then $G_{q,R}f$ is bounded and
$G_{q,R}f(z)\to0$ as $|z|\to\infty$, for every fixed $R>0$.
Consequently, $\IDA^{(\rho,\tau),q}\subset\VDA^q\subset\BDA^q$.
\end{lemma}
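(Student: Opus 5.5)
We prove Lemma~\ref{lem:lorentz-radius} by comparing the two radii pointwise, up to finitely many translates, and then using elementary Lorentz quasi-norm facts.

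\emph{Direction $r\le R$, bounding $G_{q,r}f$ by $G_{q,R}f$.} Since $B(z,r)\subset B(z,R)$, any $h\in\mathcal H(B(z,R))$ restricts to $B(z,r)$. This gives $G_{q,r}f(z)\le (R/r)^{2n/q}G_{q,R}f(z)$ pointwise, and the Lorentz bound follows from monotonicity of the quasi-norm.

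\emph{Direction $r<R$, bounding $G_{q,R}f$ by $G_{q,r}f$.} Here pointwise domination fails, so we rely on the radius comparison from \cite[Section~3]{HVAPDE}, in the local form underlying \cite[Corollary~3.10]{HVAPDE}. For $q\ge1$ that local form says
\[
 G_{q,R}f(z)\le C\sum_{j=1}^{N}G_{q,r}f(z+v_j),
\]
where $v_1,\dots,v_N$ are finitely many fixed vectors with $N$ and $C$ depending only on $r,R,n,q$. If only an integrated version is available there, we instead derive a bound of the form
\[
 G_{q,R}f(z)\le C\Bigl(\avint_{B(z,R')}G_{q,r'}f^{\,t}\,dv\Bigr)^{1/t}
\]
for a small $t$. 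This would be done by covering $B(z,R)$ with overlapping small balls and gluing the local holomorphic approximants with a $\bar\partial$-correction. The gluing is the main technical point, and I would quote it rather than redo it.

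Granting the finite-translate bound, the Lorentz estimate follows from two facts. First, translation invariance of $\|\cdot\|_{\rho,\tau}$. Second, the finite-sum quasi-triangle inequality $\|\sum_{j\le N}u_j\|_{\rho,\tau}\le C_N\sum_j\|u_j\|_{\rho,\tau}$, which is valid for all $0<\rho<\infty$ and $0<\tau\le\infty$, including the quasi-Banach range. Together these give \eqref{eq:lorentz-radius}. If instead we have the averaged bound, the right side is dominated by a finite sum of translates of $G_{q,r'}f$ evaluated at points of a small cube, and the same conclusion follows. The final step is to replace $r'$ by $r$, using the first direction.

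\emph{Boundedness and decay.} Suppose $\|G_{q,r}f\|_{\rho,\tau}<\infty$. Then $G_{q,r}f\in L^{\rho,\infty}$, so $|\{G_{q,r}f>\lambda\}|\le C\lambda^{-\rho}$ for every $\lambda>0$. Fix $R$ and choose $r'=R/4$. By the first direction, $G_{q,R/4}f$ also lies in $L^{\rho,\infty}$.

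Now fix $z$ and $\lambda>0$, and suppose $G_{q,R}f(z)>C_1\lambda$ for a large constant $C_1$. Let $w\in B(z,R/4)$ and choose $h$ nearly optimal on $B(w,R/4)$. A direct comparison on overlapping balls does not give a lower bound for $G_{q,R}f(z)$ in terms of a single such $w$, so we use a chaining argument instead. Consider chains of balls of radius $R/4$ joining points of $B(z,R)$. The holomorphic approximants on consecutive balls can be glued as above. This shows that $G_{q,R}f(z)\le C\sup_{w\in B(z,R)}\operatorname{ess}G_{q,R/4}f(w)$ is not what we need; what we need, and what the finite-translate inequality gives, is the following averaged version. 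Averaging that inequality over $z$ in a small ball $B(z_0,\delta)$ yields
\[
 G_{q,R}f(z_0)\le C\avint_{B(z_0,\delta)}\sum_j G_{q,r}f(\cdot+v_j)\,dv.
\]
This uses continuity of $G_{q,R}f$ in $z$, which comes from absolute continuity of the integral over the moving ball.

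The right side is bounded. For $\rho>1$, use H\"older against $L^{\rho,\infty}$ on a set of fixed measure. For $\rho\le1$, first take the power $t<\rho$: the average of $u^{t}$ over a set of fixed measure is bounded by $C\|u\|_{\rho,\infty}^t$. The right side also tends to $0$ as $|z_0|\to\infty$. The reason is that the $L^{\rho,\tau}$ quasi-norm with $\tau<\infty$ is absolutely continuous, so the tails vanish. For $\tau=\infty$ we must instead use the distribution bound directly: the set where $G>\lambda$ has finite measure, and hence cannot contain balls of fixed radius centred at arbitrarily far points with the mean exceeding $\lambda$. Doing this with the power-$t$ average is the delicate part of the argument.

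Finally, the inclusions $\IDA^{(\rho,\tau),q}\subset\VDA^q\subset\BDA^q$ follow directly. The first is the decay statement just proved. The second holds because a function in $\VDA^q$ is bounded outside a compact set, and it is locally bounded because $f\in L^q_{\rm loc}$ gives $G_{q,1}f\le M_{q,1}f$.
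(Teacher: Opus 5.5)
Your comparison of radii follows the paper. You bound $G_{q,R}f$ pointwise by finitely many translates of $G_{q,r}f$, then use translation invariance and the finite-sum quasi-triangle inequality. The paper derives that pointwise bound explicitly rather than from a ``local form'' of \cite[Corollary~3.10]{HVAPDE}:
\begin{itemize}
\item it splits $f=f_1+f_2$ at a scale $2s<r$ by \cite[Lemma~3.6]{HVAPDE};
\item it uses $G_{q,R}f_1\le C_RM_{q,2R}(\bar\partial f_1)$ and $G_{q,R}f_2\le M_{q,R}f_2$ from \cite[Theorem~3.8]{HVAPDE};
\item it covers back down to radius $s/2$ and uses $G_{q,2s}f\le C_rG_{q,r}f$.
\end{itemize}
That is exactly the gluing you propose to quote, so this half is fine if you cite it in that form.

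The genuine gap is in the boundedness and decay part. To pass from the averaged inequality to the point value $G_{q,R}f(z_0)$, you invoke continuity of $G_{q,R}f$. Absolute continuity of the integral alone does not give this, since the admissible class $\mathcal H(B(z,R))$ moves with $z$. Even granting continuity, it only supplies a radius $\delta=\delta(z_0,f)$. The H\"older or weak-type bound on $B(z_0,\delta)$ is then not uniform, so you get neither $\|G_{q,R}f\|_\infty\lesssim\|G_{q,r}f\|_{\rho,\tau}$ nor decay. You also leave the decay for $\tau=\infty$ as ``the delicate part''. That case is needed for $\IDA^{(\rho,\infty),q}\subset\VDA^q$ and later in the weak-Schatten applications.

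The missing idea is the elementary restriction inequality, used in the opposite direction from the one you attempted. Showing that a large $G_{q,R}f(z)$ forces a large small-radius distance nearby is the hard direction. Instead, note that if $w\in B(z,R/2)$, then $B(z,R/2)\subset B(w,R)$, so
\[
 G_{q,R/2}f(z)\le 2^{2n/q}\,G_{q,R}f(w),\qquad w\in B(z,R/2).
\]
Thus if $A=G_{q,R/2}f(z)>0$, the whole ball $B(z,R/2)$, of fixed volume, lies in $\{G_{q,R}f>2^{-2n/q}A/2\}$.
\begin{itemize}
\item The weak-type bound $|\{|u|>\lambda\}|\le\lambda^{-\rho}\|u\|_{\rho,\infty}^\rho$ then gives $A\le C\|G_{q,R}f\|_{\rho,\infty}$.
\item If $G_{q,R/2}f(z_j)\ge\varepsilon$ along $z_j\to\infty$, disjoint balls $B(z_j,R/2)$ would lie in a level set of finite measure, a contradiction.
\end{itemize}
This covers all $0<\rho<\infty$ and $0<\tau\le\infty$ at once. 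It needs no averaging, no auxiliary power $t$, no continuity, and no absolute continuity of the quasi-norm. Running it with $2R$ in place of $R$ and using \eqref{eq:lorentz-radius} gives the conclusions at every radius. In your averaged route, the variant $G_{q,R}f(z_0)\le C\,G_{q,R+\delta}f(z)$ for $|z-z_0|<\delta$ is what would legitimately replace continuity. Once you have it, though, the averaging is unnecessary.
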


\begin{proof}
Choose $s>0$ with $2s<r$. By \cite[Lemma~3.6]{HVAPDE}, write
$f=f_1+f_2$, where $f_1\in C^2$ and
\begin{equation}\label{eq:lorentz-ida-decomp}
 M_{q,s/2}(\bar\partial f_1)+M_{q,s/2}f_2
 \le C G_{q,2s}f\le C_rG_{q,r}f.
\end{equation}
Here $M_{q,a}(\bar\partial f_1)$ is the local $L^q$ mean of the
Euclidean norm of $\bar\partial f_1$. The local solution estimates in
\cite[Theorem~3.8, equations~(3.22)--(3.24)]{HVAPDE} give
\[
 G_{q,R}f_1\le C_RM_{q,2R}(\bar\partial f_1),\qquad
 G_{q,R}f_2\le M_{q,R}f_2.
\]
Finite coverings bound each mean on the right by a finite sum of
translates of the corresponding mean at radius $s/2$. Together with
\eqref{eq:lorentz-ida-decomp}, this gives a pointwise bound of
$G_{q,R}f$ by a finite sum of translates of $G_{q,r}f$.
Translation invariance and the Lorentz quasi-triangle inequality give
$\|G_{q,R}f\|_{\rho,\tau}\le C_{r,R}\|G_{q,r}f\|_{\rho,\tau}.$
Interchanging $r$ and $R$ proves \eqref{eq:lorentz-radius}.

Restriction of holomorphic approximants gives
\begin{equation}\label{eq:lorentz-local-propagation}
 G_{q,R/2}f(z)\le C_RG_{q,R}f(w),\qquad w\in B(z,R/2);
\end{equation}
see \cite[Corollary~3.4]{HVAPDE}. If $A=G_{q,R/2}f(z)>0$, then
$B(z,R/2)\subset\{G_{q,R}f>A/(2C_R)\}$. The weak distribution bound
and $L^{\rho,\tau}\hookrightarrow L^{\rho,\infty}$ imply
$\|G_{q,R/2}f\|_\infty\le C\|G_{q,R}f\|_{\rho,\tau}.$
If $G_{q,R/2}f$ did not tend to zero, there would be
$\varepsilon>0$ and $z_j\to\infty$ with
$G_{q,R/2}f(z_j)\ge\varepsilon$. Select a subsequence for which
$B(z_j,R/2)$ are disjoint. By \eqref{eq:lorentz-local-propagation},
the set $\{G_{q,R}f>\varepsilon/(2C_R)\}$ would have infinite
measure, contrary to $G_{q,R}f\in L^{\rho,\infty}$.
Applying the argument with $2R$ in place of $R$ proves both conclusions
at every fixed radius. The stated inclusions follow from the definitions.
\end{proof}

\begin{lemma}[Reduction to a local square-integrable symbol]
\label{lem:q-decomposition}
Assume \eqref{eq:weight}. Let $1\le q<\infty$, $t>0$, and
$f\in\Symb\cap\BDA^q$. Put $\Delta=G_{q,16t}f$.
There is a decomposition $f=g+h$ with $g\in C^\infty(\C^n)$ such that
\[
 M_{q,t/2}h\le C G_{q,2t}f,\qquad G_{2,t}g\le C\Delta.
\]
Both $g$ and $h$ belong to $\Symb$. Also, $gk_\zeta\in L^2_\varphi$
for every $\zeta\in\C^n$. The local sizes satisfy
\[
 \begin{split}
 M_{q,t}f&\le M_{q,t}h+C M_{2,2t}g+C\Delta,\\
 M_{2,t}g&\le C M_{q,2t}f+C\Delta.
 \end{split}
\]
All constants are independent of $f$.
\end{lemma}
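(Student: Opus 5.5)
The plan is to take the decomposition of \cite[Lemma~3.6]{HVAPDE} at scale $t$, already used in Lemmas~\ref{lem:mixed-E} and \ref{lem:lorentz-radius}, and set $g=f_1$, $h=f_2$. In that construction one covers $\C^n$ by balls $B(a_j,t/4)$ on a lattice and takes a smooth partition of unity $\{\psi_j\}$ subordinate to it, with $|\nabla\psi_j|\le C/t$ and bounded overlap. One then chooses near-optimal holomorphic approximants $H_j\in\mathcal H(B(a_j,2t))$ of $f$ and puts
\[
 g=\sum_j\psi_jH_j,\qquad h=f-g=\sum_j\psi_j(f-H_j).
\]
Since the $\psi_j$ are smooth and the $H_j$ holomorphic, $g\in C^\infty$. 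The estimate $M_{q,t/2}h\le CG_{q,2t}f$ then follows from Minkowski's inequality and bounded overlap.

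For $\bar\partial g=\sum_j\bar\partial\psi_j\,(H_j-H_k)$ on the support of $\psi_k$, I would use the comparison of overlapping approximants by the holomorphic submean inequality. This gives the pointwise bound $|\bar\partial g(w)|\le C\sup_{u\in B(w,4t)}G_{q,8t}f(u)$. The propagation estimate \eqref{eq:lorentz-local-propagation} converts this into $|\bar\partial g|\le C\Delta$ on $B(z,2t)$, with $\Delta=G_{q,16t}f(z)$. The local $\bar\partial$-solution estimate \cite[Theorem~3.8]{HVAPDE}, $G_{2,t}g\le C M_{2,2t}(\bar\partial g)$, then gives $G_{2,t}g\le C\Delta$.

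\emph{Local sizes.} The first inequality follows from $f=g+h$, Minkowski's inequality, and $M_{q,t}g\le M_{2,t}g\le CM_{2,2t}g$ (H\"older when $q\le2$; for $q>2$ I would instead write $g=H+(g-H)$ with $H$ holomorphic and control $M_q$ of $g-H$ by $\Delta$ via the bounded $\bar\partial$). For the second inequality, pick a holomorphic $H$ on $B(z,4t)$ with $\|g-H\|_{L^2(B(z,2t))}\lesssim\Delta$. The submean inequality bounds $\sup_{B(z,2t)}|H|$ by the $L^q$ mean of $H$ over a slightly larger ball. That mean is at most $M_qf+M_qh+M_q(g-H)\le CM_{q,2t}f+C\Delta$, after radius adjustments from Lemma~\ref{lem:mean}. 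Hence $M_{2,t}g\le CM_{q,2t}f+C\Delta$.

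\emph{Domains.} For $h$, exactly as in Lemma~\ref{lem:mixed-E},
\[
 \int|hk_z|e^{-\varphi}\,dv\le C\int e^{-c|z-u|}M_{q,t/2}h(u)\,dv(u)<\infty,
\]
since $M_{q,t/2}h\le C\|G_{q,2t}f\|_\infty<\infty$ because $f\in\BDA^q$. Hence $h\in\Symb$, and then $g=f-h\in\Symb$. The statement $gk_\zeta\in L^2_\varphi$ is where I expect the main obstacle: it needs growth control of $M_{2,t}g(u)$ against the decay $e^{-c|\zeta-u|}$, and local $L^q$ finiteness alone does not give this. My first attempt is the following. By the estimates above, $M_{2,t}g(u)\le C(M_{1,Ct}f(u)+\|\Delta\|_\infty)$, where the $L^1$ mean suffices because the submean step only needs $L^1$ control of $H$. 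Using the local lower bound $|k_u|e^{-\varphi}\ge c_0$ near $u$,
\[
 M_{1,Ct}f(u)\le C\int|f||k_u|e^{-\varphi}\,dv,
\]
and summing over a lattice of points $u$ gives finiteness of $\int|g|^2|k_\zeta|^2e^{-2\varphi}$. If this last summation fails to converge uniformly, I would instead bound $|g|\le\sum_j\psi_j|H_j|$ pointwise and estimate $|H_j|e^{-\varphi}$ through the weighted submean inequality applied to $H_jk_\zeta$, exploiting that $k_\zeta$ is zero-free near $\zeta$.
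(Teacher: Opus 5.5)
Your decomposition, the bound on $G_{2,t}g$ through $\bar\partial g$ and \cite[Theorem~3.8]{HVAPDE}, the two local-size inequalities, and the proof that $h,g\in\Symb$ are sound in outline. The radius bookkeeping needs adjusting. With approximants near-optimal on $B(a_j,2t)$ and $\psi_j$ supported in $B(a_j,t/4)$, you only get $M_{q,t/2}h\le C\,G_{q,3t}f$, so take approximants on $B(a_j,t)$ with $\supp\psi_j\subset B(a_j,t/2)$, as the paper does. Also, apply the submean comparison you already use for $\bar\partial g$ between each $H_j$ and a single best approximant $v_z$ of $f$ on $B(z,16t)$. This gives the uniform estimate $\sup_{B(z,4t)}|g-v_z|\le C\,G_{q,16t}f(z)$. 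That is the paper's key step, and it yields $G_{2,t}g\le C\Delta$ and both size inequalities for every $q\ge1$ at once. You then need neither a $\bar\partial$-solution estimate nor a separate case $q>2$.

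The genuine gap is $gk_\zeta\in L^2_\varphi$, and your first attempt cannot close it. After bounding $M_{2,t}g(u)$ by $M_{1,Ct}f(u)+\|\Delta\|_\infty$, you must control $\sum_u(\sup_{B(u,t)}|k_\zeta|e^{-\varphi})^2M_{1,Ct}f(u)^2$. Nothing in your chain does this: $f\in\Symb$ is an $L^1$ statement about the \emph{product} $fk_\zeta$, and splitting the product loses it. The majorants you propose actually diverge. Take $n=1$, $\varphi=|z|^2/2$, and $f(w)=e^{aw^2}$ with $1/4<a<1/2$. Then $f$ is entire, so $g=f$, $h=0$, and indeed $fk_\zeta\in L^2_\varphi$. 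Yet for real $u$ one has $M_{1,Ct}f(u)=e^{au^2+O(|u|)}$, so the sum diverges against the general decay bound $e^{-2c|\zeta-u|}$. Moreover $\|fk_u\|_{1,\varphi}\asymp e^{au^2/(1-2a)}$, so after your substitution it diverges even against the exact factor $|k_\zeta(u)|^2e^{-2\varphi(u)}\asymp e^{-|u-\zeta|^2}$.

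Your fallback points the right way but targets the wrong quantity. A bound for $|H_j|e^{-\varphi}$ alone would require dividing by $k_\zeta$, and zero-freeness of $k_\zeta$ near $\zeta$ does not help near infinity, where integrability is decided. The missing idea is to keep the product: apply the weighted $L^1$ submean inequality to the holomorphic function $v_zk_\zeta$ and use $|g-v_z|\le C\Delta$ to get
\[
 |g(z)k_\zeta(z)|e^{-\varphi(z)}
 \le C\fint_{B(z,t)}|g k_\zeta|e^{-\varphi}\,dv
 +C\|\Delta\|_\infty\sup_{B(z,t)}|k_\zeta|e^{-\varphi}.
\]
You have already shown $gk_\zeta\in L^1_\varphi$, so the right side is bounded uniformly in $z$. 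Hence $gk_\zeta e^{-\varphi}\in L^1\cap L^\infty\subset L^2$, which is how the paper concludes.
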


\begin{proof}
Choose a lattice $\{a_j\}$, a nonnegative smooth partition of
unity $\{\psi_j\}$ supported in $B(a_j,t/2)$, and local holomorphic best
approximants $h_j$ on $B(a_j,t)$.
Lemma~3.3 of \cite{HVAPDE} shows that these local best approximants
exist for every $1\le q<\infty$.
Set
\[
 g=\sum_j\psi_jh_j,\qquad h=f-g.
\]
By \cite[Lemmas 3.3 and 3.6]{HVAPDE},
\begin{equation}\label{eq:q-decomp-error}
 M_{q,t/2}h(z)\le C G_{q,2t}f(z).
\end{equation}
We also need a local consequence of the same construction. Let $v_z$
be a best holomorphic approximant to $f$ on $B(z,16t)$. Then
\begin{equation}\label{eq:q-decomp-sup}
 \sup_{w\in B(z,4t)}|g(w)-v_z(w)|
 \le C G_{q,16t}f(z).
\end{equation}
Indeed, if $w\in B(z,4t)$ and $\psi_j(w)\ne0$, then
$B(w,t/4)\subset B(a_j,t)\subset B(z,16t)$. The best-approximation
property gives
\[
 M_{q,t}(f-h_j)(a_j)
 \le M_{q,t}(f-v_z)(a_j)\le C G_{q,16t}f(z).
\]
The same bound holds for $M_{q,t}(h_j-v_z)(a_j)$ by the triangle
inequality. Apply the holomorphic submean inequality on $B(w,t/4)$.
It follows that $|h_j(w)-v_z(w)|\le C G_{q,16t}f(z)$.
Summing against $\psi_j(w)$ proves \eqref{eq:q-decomp-sup}. Hence
\begin{equation}\label{eq:q-to-two}
 G_{2,t}g(z)\le C G_{q,16t}f(z).
\end{equation}

Since $q\ge1$, the local $L^1$ means of $h$ are bounded by its local
$L^q$ means. These are bounded by \eqref{eq:q-decomp-error}.
Kernel decay and a lattice summation therefore give
$hk_\zeta\in L^1_\varphi$ for every $\zeta$, and hence
$gk_\zeta\in L^1_\varphi$. In fact $gk_\zeta\in L^2_\varphi$.
Using the weighted holomorphic $L^1$ submean estimate for
$v_zk_\zeta$ and \eqref{eq:q-decomp-sup}, with
$D_\infty=\|G_{q,16t}f\|_\infty$, gives
\[
 |g(z)k_\zeta(z)|e^{-\varphi(z)}
 \le C\avint_{B(z,t)}|g(w)k_\zeta(w)|e^{-\varphi(w)}\,dv(w)
 +CD_\infty\sup_{w\in B(z,t)}
 |k_\zeta(w)|e^{-\varphi(w)}.
\]
The right side is uniformly bounded in $z$. Thus
$gk_\zeta e^{-\varphi}\in L^1\cap L^\infty\subset L^2$, so $g$
satisfies the claimed kernel $L^2$ condition.
Moreover, $T_f=T_g+T_h$ on $\Gamma$.

Set $\Delta(z)=G_{q,16t}f(z)$. From
\eqref{eq:q-decomp-sup} and the holomorphic submean inequality,
\[
 \sup_{B(z,t)}|v_z|
 \le C M_{2,2t}v_z(z)
 \le C M_{2,2t}g(z)+C\Delta(z).
\]
Since $f=g+h$,
\begin{equation}\label{eq:sizeforward}
 M_{q,t}f(z)
 \le M_{q,t}h(z)+C M_{2,2t}g(z)+C\Delta(z).
\end{equation}
Conversely,
\[
 \sup_{B(z,t)}|v_z|
 \le C M_{q,2t}v_z(z)
 \le C M_{q,2t}f(z)+C\Delta(z),
\]
and therefore
\begin{equation}\label{eq:sizereverse}
 M_{2,t}g(z)
 \le C M_{q,2t}f(z)+C\Delta(z).
\end{equation}

These are the two local-size estimates in the statement.
\end{proof}

\begin{lemma}
\label{lem:lorentz-hankel}
Assume \eqref{eq:weight}. Let $0<\rho<\infty$,
$0<\tau\le\infty$, and let $u\in\Symb\cap L^2_{\rm loc}$ satisfy
$uk_z\in L^2_\varphi$ for every $z$. If
$G_{2,r}u\in L^{\rho,\tau}$, then
\[
 H_u\in\Sch_{\rho,\tau},\qquad
 E_u(z):=\|H_uk_z\|_{2,\varphi}\in L^{\rho,\tau},
\]
and
\begin{equation}\label{eq:lorentz-hankel-estimate}
 \|H_u\|_{\Sch_{\rho,\tau}}+\|E_u\|_{\rho,\tau}
 \le C\|G_{2,r}u\|_{\rho,\tau}.
\end{equation}
\end{lemma}

\begin{proof}
We combine the IDA decomposition with a positive-form estimate. A
related argument for radial Fock-type spaces appears in \cite{HWZ}.
Choose $t>0$ with
$2t<r$. By \cite[Lemma~3.6]{HVAPDE}, with local exponent $2$, write
$u=u_1+u_2$, where $u_1\in C^2$ and
\begin{equation}\label{eq:lorentz-hankel-decomp}
 M_{2,t/2}(\bar\partial u_1)+M_{2,t/2}u_2
 \le C G_{2,2t}u
 \le C_rG_{2,r}u.
\end{equation}
The local $L^2$ mean of $\bar\partial u_1$ is the quantity appearing
in the corrected estimate of
\cite[Theorem~2.6 and Remark~2]{HVCorr}.

By Lemma~\ref{lem:lorentz-radius}, the right side of
\eqref{eq:lorentz-hankel-decomp} is bounded. Hence the corresponding
fixed-ball $L^2$ masses are uniformly bounded. Kernel decay gives
$u_2k_z,\ k_z\bar\partial u_1\in L^2_\varphi.$
Since $uk_z\in L^2_\varphi$, we also have $u_1k_z\in L^2_\varphi$.
Thus the identities below are justified first on $\Gamma$.

Set
\[
 d\mu_1=|\bar\partial u_1|^2\,dv,\qquad
 d\mu_2=|u_2|^2\,dv.
\]
From \eqref{eq:lorentz-hankel-decomp},
\[
 \|\mu_j(B(\cdot,t/2))\|_{\rho/2,\tau/2}
 \le C\|G_{2,r}u\|_{\rho,\tau}^2,
 \qquad j=1,2,
\]
where $\infty/2=\infty$. Lemma~\ref{lem:positive-lorentz} therefore
gives
\[
 T_{\mu_j}\in\Sch_{\rho/2,\tau/2},
 \qquad
 \|T_{\mu_j}\|_{\Sch_{\rho/2,\tau/2}}
 \le C\|G_{2,r}u\|_{\rho,\tau}^2.
\]

For $v\in\Gamma$, the weighted $\bar\partial$ estimate
\cite[Section~4]{HVAPDE} and the contractivity of the orthogonal
projection give
\[
 \|H_{u_1}v\|_{2,\varphi}^2
 \le C\int_{\C^n}|v|^2e^{-2\varphi}\,d\mu_1,
 \qquad
 \|H_{u_2}v\|_{2,\varphi}^2
 \le \int_{\C^n}|v|^2e^{-2\varphi}\,d\mu_2.
\]
Here $H_{u_1}v$ is the minimal weighted $L^2$ solution of
$\bar\partial w=v\bar\partial u_1$. Since $T_{\mu_j}$ are bounded,
these estimates extend $H_{u_j}$ from $\Gamma$ to $F^2_\varphi$ and give
\[
 0\le H_{u_1}^*H_{u_1}\le CT_{\mu_1},
 \qquad
 0\le H_{u_2}^*H_{u_2}\le T_{\mu_2}.
\]

Fix $j\in\{1,2\}$. Let $E_N$ project onto the eigenvectors for the
$N$ largest positive eigenvalues of $T_{\mu_j}$, or onto its whole
range if its rank is less than $N$. Then
\[
 \|H_{u_j}(I-E_N)\|^2
 \le C s_{N+1}(T_{\mu_j})\longrightarrow0,
\]
while $H_{u_j}E_N$ has finite rank. The min--max principle gives
$s_k(H_{u_j})^2\le Cs_k(T_{\mu_j}),$
and therefore
\[
 \|H_{u_j}\|_{\Sch_{\rho,\tau}}^2
 \le C\|T_{\mu_j}\|_{\Sch_{\rho/2,\tau/2}}
 \le C\|G_{2,r}u\|_{\rho,\tau}^2.
\]
Since $H_u=H_{u_1}+H_{u_2}$,
\[
 \|H_u\|_{\Sch_{\rho,\tau}}
 \le C\|G_{2,r}u\|_{\rho,\tau}.
\]

Finally, testing the same form inequalities on $k_z$ yields
\[
 \|H_{u_j}k_z\|_{2,\varphi}^2
 \le C\widetilde\mu_j(z).
\]
By Lemma~\ref{lem:positive-lorentz},
\[
 \|\widetilde\mu_j\|_{\rho/2,\tau/2}
 \le C\|G_{2,r}u\|_{\rho,\tau}^2.
\]
Using
\[
 \|\widetilde\mu_j^{1/2}\|_{\rho,\tau}^2
 =\|\widetilde\mu_j\|_{\rho/2,\tau/2}
\]
and
$E_u\le E_{u_1}+E_{u_2},$
the Lorentz quasi-triangle inequality gives
\[
 \|E_u\|_{\rho,\tau}
 \le C\|G_{2,r}u\|_{\rho,\tau}.
\]
Together with the preceding operator estimate, this proves
\eqref{eq:lorentz-hankel-estimate}.
\end{proof}

\subsection{Proof of the Schatten--Lorentz criterion}

\begin{proof}[Proof of Theorem~\ref{thm:lorentz}]
We first establish two scalar estimates valid at all Lorentz indices.
For these estimates let $v\in\Symb\cap L^q_{\rm loc}$.
Choose
\[
 0<\theta<\min\{1,\rho,\tau\}
 \quad\text{if }\tau<\infty,
 \qquad
 0<\theta<\min\{1,\rho\}
 \quad\text{if }\tau=\infty.
\]
Then $\rho/\theta>1$ and $\tau/\theta>1$ when $\tau<\infty$.
We use an equivalent Banach lattice norm on
$L^{\rho/\theta,\tau/\theta}$. For $p>1$ and
$1\le s\le\infty$, this norm may be defined by replacing $w^*$ with
\[
 w^{**}(t)=\frac1t\int_0^t w^*(u)\,du;
\]
see \cite[Chapter~4]{BennettSharpley}. For every nonnegative
$\kappa\in L^1(\C^n)$,
$(\kappa*|w|)^{**}(t)\le\|\kappa\|_1w^{**}(t).$
Hence fixed-ball averaging and integrable convolution are bounded in
the convexified Lorentz space. Together with
\[
 \||v|^\theta\|_{\rho/\theta,\tau/\theta}
 =\|v\|_{\rho,\tau}^\theta,
\]
\eqref{eq:meanrecovery} with exponent $\theta$ and
Lemma~\ref{lem:lorentz-radius} give
\begin{equation}\label{eq:lorentz-mean-recovery}
 \|M_{q,r}v\|_{\rho,\tau}
 \le C\bigl(\|a_rv\|_{\rho,\tau}
       +\|G_{q,r}v\|_{\rho,\tau}\bigr).
\end{equation}
Likewise, choose a lattice spacing $\delta>0$ so small that each cube
centered at $z+\delta k$ lies in $B(z+\delta k,r)$. Kernel decay gives
\[
 |b_v(z)|\le C\sum_{k\in\mathbb Z^{2n}}e^{-c|k|}
          M_{q,r}v(z+\delta k).
\]
Raising to the power $\theta$ and using translation invariance and
the preceding convolution estimate yields
\begin{equation}\label{eq:lorentz-b-upper}
 \|b_v\|_{\rho,\tau}\le C\|M_{q,r}v\|_{\rho,\tau}.
\end{equation}

We next treat the local $L^2$ case. Let $v\in\Symb$ satisfy
$vk_z\in L^2_\varphi$ for every $z$ and
$G_{2,r}v\in L^{\rho,\tau}$. Lemma~\ref{lem:lorentz-hankel} gives
\begin{equation}\label{eq:lorentz-local-hankel}
 H_v\in\Sch_{\rho,\tau},\qquad
 E_v(z)=\|H_vk_z\|_{2,\varphi}\in L^{\rho,\tau},
\end{equation}
with both quasi-norms bounded by
$C\|G_{2,r}v\|_{\rho,\tau}$.

The identity $M_v=JT_v+H_v$ holds first on $\Gamma$. Since
$J:F^2_\varphi\hookrightarrow L^2_\varphi$ is isometric, an extension
of either $T_v$ or $M_v$ in $\Sch_{\rho,\tau}$ yields the corresponding
extension of the other.

We next check that a bounded extension of $M_v$ acts by multiplication.
If $M_{2,r}v\in L^{\rho,\tau}$, then for
$w\in B(z,r)$,
$M_{2,r}v(z)\le C M_{2,2r}v(w).$
Finite coverings give
\[
 \|M_{2,2r}v\|_{\rho,\tau}
 \le C\|M_{2,r}v\|_{\rho,\tau}.
\]
If $M_{2,r}v(z)=A>0$, the propagation estimate puts $B(z,r)$ inside
$\{M_{2,2r}v>A/(2C)\}$. Its fixed positive volume and the weak
$L^\rho$ bound imply $A\le C\|M_{2,2r}v\|_{\rho,\infty}$.
Thus $M_{2,r}v$ is bounded. The positive Carleson criterion therefore
defines the multiplication operator
$M_v:F^2_\varphi\to L^2_\varphi$.

If a bounded extension of $M_v$ is given instead, choose
$u_j\in\Gamma$ with $u_j\to u$ in $F^2_\varphi$. Then
$u_j\to u$ locally uniformly, and a subsequence of $vu_j$ converges
almost everywhere to the $L^2_\varphi$ limit. Hence that limit is
$vu$. Thus in either case the bounded extension is multiplication, and
$M_v^*M_v=T_{|v|^2}$
as a positive form. The singular-value power identity and
Lemma~\ref{lem:positive-lorentz} now give
\[
\begin{split}
 T_v\in\Sch_{\rho,\tau}
 &\Longleftrightarrow M_v\in\Sch_{\rho,\tau}
 \Longleftrightarrow T_{|v|^2}\in\Sch_{\rho/2,\tau/2}\\
 &\Longleftrightarrow M_{2,r}v\in L^{\rho,\tau}.
 \end{split}
\]

Fix $r_0>0$ so small that the uniform local kernel lower bound holds on
$B(z,8r_0)$ for every $z$. At this radius use the holomorphic quotients
\[
 Q_z=\frac{T_vk_z}{k_z}.
\]
The quotient comparison in Lemma~\ref{lem:mixed-recovery} gives
$|Q_z(w)-b_v(w)|\le C(E_v(z)+E_v(w))$ for $w\in B(z,2r_0)$.
Also, $\|v-Q_z\|_{L^2(B(z,4r_0))}\le CE_v(z)$.
Apply the holomorphic submean inequality to $Q_z$ with exponent
$\theta$. We obtain
\[
 M_{2,r_0}v(z)\le CE_v(z)+C\left(\avint_{B(z,2r_0)}
    (|b_v(w)|^\theta+E_v(w)^\theta)\,dv(w)\right)^{1/\theta}.
\]
The convexified Lorentz averaging estimate and
\eqref{eq:lorentz-local-hankel} give
\[
 \|M_{2,r_0}v\|_{\rho,\tau}
 \le C\bigl(\|b_v\|_{\rho,\tau}
       +\|G_{2,r}v\|_{\rho,\tau}\bigr).
\]
Finite coverings compare the fixed-radius local means, while
Lemma~\ref{lem:lorentz-radius} compares the analytic distances. Hence
\begin{equation}\label{eq:lorentz-local-recovery}
 \|M_{2,r}v\|_{\rho,\tau}
 \le C\bigl(\|b_v\|_{\rho,\tau}
       +\|G_{2,r}v\|_{\rho,\tau}\bigr).
\end{equation}
Equations \eqref{eq:lorentz-mean-recovery},
\eqref{eq:lorentz-b-upper}, and
\eqref{eq:lorentz-local-recovery} prove the scalar equivalences and
the joint estimates in the local $L^2$ case.

For general $q\ge1$, fix a small $t>0$ and use the decomposition $f=g+h$ in
Lemma~\ref{lem:q-decomposition}. Its BDA assumption follows from
Lemma~\ref{lem:lorentz-radius}. Put $\Delta=G_{q,16t}f$.
Equations \eqref{eq:q-decomp-error} and
\eqref{eq:q-to-two}, together with Lemma~\ref{lem:lorentz-radius},
give
\begin{equation}\label{eq:lorentz-general-decomp}
 \|G_{2,t}g\|_{\rho,\tau}
 +\|M_{q,t/2}h\|_{\rho,\tau}
 \le CD_{\rho,\tau}.
\end{equation}
The boundedness statement in Lemma~\ref{lem:lorentz-radius} also gives
the domain argument following \eqref{eq:q-decomp-sup}; hence
$g,h\in\Symb$ and $gk_z\in L^2_\varphi$.

Put $d\mu=|h|\,dv$. Since
$M_{1,t/2}h\le M_{q,t/2}h,$
the fixed-ball masses of $\mu$ belong to $L^{\rho,\tau}$ with norm
bounded by $CD_{\rho,\tau}$. Lemma~\ref{lem:positive-lorentz} therefore
gives
$T_\mu\in\Sch_{\rho,\tau}.$
Let $A:F^2_\varphi\to L^2_\varphi$ be multiplication by
$|h|^{1/2}$. Then $A^*A=T_\mu$. With
$\operatorname{sgn}h=h/|h|$ on $\{h\ne0\}$ and zero elsewhere,
$T_h=A^*M_{\operatorname{sgn}h}A$
in the sense of forms, and this factorization agrees with the initial
Toeplitz operator on $\Gamma$. The singular-value inequality
\[
 s_{2k-1}(T_h)
 \le s_k(A)^2
 =s_k(T_\mu)
\]
gives the operator estimate. The scalar estimates follow from
\[
 |a_rh|\le M_{1,r}h,\qquad
 |b_h|\le b_{|h|},
\]
together with fixed-radius comparison and
Lemma~\ref{lem:positive-lorentz}. Thus
\begin{equation}\label{eq:lorentz-remainder}
 \|T_h\|_{\Sch_{\rho,\tau}}
 +\|a_rh\|_{\rho,\tau}
 +\|b_h\|_{\rho,\tau}
 \le CD_{\rho,\tau}.
\end{equation}

Apply the local $L^2$ result to $g$. Equations
\eqref{eq:sizeforward} and \eqref{eq:sizereverse}, the finite-cover
comparison for $M_q$, and \eqref{eq:lorentz-general-decomp} yield
\[
 \begin{split}
 \|M_{q,r}f\|_{\rho,\tau}
 &\le C\bigl(\|M_{2,r}g\|_{\rho,\tau}
       +D_{\rho,\tau}\bigr),\\
 \|M_{2,r}g\|_{\rho,\tau}
 &\le C\bigl(\|M_{q,r}f\|_{\rho,\tau}
       +D_{\rho,\tau}\bigr).
 \end{split}
\]
Together with $T_f=T_g+T_h$ and
\eqref{eq:lorentz-remainder}, these prove the operator and local-size
equivalences. Finally,
$a_rf=a_rg+a_rh,\qquad b_f=b_g+b_h$
give the two scalar equivalences. Since
$G_{q,r}f\le M_{q,r}f,$
the distance term is absorbed on the local-size side of
\eqref{eq:lorentznorm}. This proves all joint estimates.
\end{proof}

\subsection{A link with different Fock exponents}
\begin{lemma}
\label{lem:local-one-descending}
Assume \eqref{eq:weight}, fix $r>0$, let $1\le q<p<\infty$, and put
$s=pq/(p-q)$. If $f\in L^1_{\rm loc}$ and $M_{1,r}f\in L^s$, then
$f\in\Symb$ and the initial Toeplitz operator extends to a compact map
$F^p_\varphi\to F^q_\varphi$. Moreover,
$\norm{T_f}_{p\to q}\le C\norm{M_{1,r}f}_s.$
\end{lemma}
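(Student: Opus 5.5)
Since only local $L^1$ control of $f$ is available, $T_f$ cannot be handled through $M_f:F^p_\varphi\to L^q_\varphi$, which would need local $L^q$ control. We instead work with the sesquilinear form and use duality. For $u\in\Gamma$ and $v$ in a dense class of $F^{q'}_\varphi$, the reproducing formula for $P$ on $L^1_\varphi$ gives
\[
 \langle T_fu,v\rangle_\varphi=\int_{\C^n}f\,u\,\bar v\,e^{-2\varphi}\,dv,
\]
so it suffices to bound $\int |u||v|e^{-2\varphi}\,d\mu$ with $d\mu=|f|\,dv$. When $q=1$, $q'=\infty$, so we take $v$ bounded with $ve^{-\varphi}\in L^\infty$, i.e.\ we pair with the dual of $F^1_\varphi$. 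This is harmless because $P$ is bounded on $L^1_\varphi$.

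The first step is to show $f\in\Symb$. For $g\in\Gamma$ it is enough to treat $g=k_z$. Fixed-ball averaging and kernel decay give
\[
 \int|f k_z|e^{-\varphi}\,dv\le C\int e^{-c|z-w|}M_{1,r}f(w)\,dv(w),
\]
and this is finite by Hölder's inequality, since $M_{1,r}f\in L^s$ and the exponential is in $L^{s'}$.

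The main estimate comes next. By the weighted submean inequality, for $w\in B(\zeta,r)$ we have $|u(w)|e^{-\varphi(w)}\lesssim U(\zeta):=\bigl(\avint_{B(\zeta,2r)}|u|^pe^{-p\varphi}\bigr)^{1/p}$, and similarly $V$ for $v$ with exponent $q'$; when $q'=\infty$ we use $\sup|v|e^{-\varphi}$ instead. Tonelli's theorem then gives
\[
 \int|u||v|e^{-2\varphi}\,d\mu\lesssim\int_{\C^n}M_{1,r}f(\zeta)\,U(\zeta)V(\zeta)\,dv(\zeta).
\]
Since $1/s+1/p+1/q'=1$, Hölder's inequality bounds this by $\norm{M_{1,r}f}_s\norm{U}_p\norm{V}_{q'}\lesssim\norm{M_{1,r}f}_s\norm{u}_{p,\varphi}\norm{v}_{q',\varphi}$. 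Here we use $\norm{U}_p\lesssim\norm{u}_{p,\varphi}$, which follows from Fubini, and $V\lesssim\norm{v}_{q',\varphi}$ pointwise when $q'=\infty$. Hence $T_f$ extends boundedly with norm at most $C\norm{M_{1,r}f}_s$, by duality $(F^q_\varphi)^*\cong F^{q'}_\varphi$ under the weighted pairing (the standard duality available since $P$ is bounded).

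I expect the duality bookkeeping to be the main obstacle, because the norm of $T_fu$ must be controlled by testing against a norming family even for $q=1$. For $1<q$ I would rely on the boundedness of $P$ on $L^{q'}_\varphi$. For $q=1$ I would use that $P$ is bounded on $L^\infty_\varphi$ as well: its kernel satisfies $|K(w,z)|e^{-\varphi(w)-\varphi(z)}\lesssim e^{-c|z-w|}$. Then $\norm{T_fu}_{1,\varphi}=\sup|\langle T_fu,\psi\rangle|$ over $\psi e^{-\varphi}$ in the unit ball of $L^\infty$, and $\langle T_fu,\psi\rangle=\langle fu,P\psi\rangle$.

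Compactness then follows by truncation. Put $f_N=f\mathbf 1_{B(0,N)}$. The bound above applied to $f-f_N$ gives
\[
 \norm{T_f-T_{f_N}}\le C\norm{M_{1,r}(f-f_N)}_s\le C\norm{M_{1,r}f\,\mathbf 1_{\{|z|>N-r\}}}_s\to0.
\]
Each $T_{f_N}$ is compact: a bounded sequence in $F^p_\varphi$ has a locally uniformly convergent subsequence, and $f_N\in L^1$ has compact support, so $f_Nu_j$ converges in $L^1_\varphi$. Its images are then Cauchy in $F^q_\varphi$. For this, the pairing estimate over the compact set $B(0,N)$ bounds $|\langle T_{f_N}(u_j-u_k),v\rangle|$ by $\norm{f_N}_1\sup_{B(0,N)}|u_j-u_k|e^{-\varphi}\cdot\sup_{B(0,N)}|v|e^{-\varphi}$, and the last factor is $\lesssim\norm{v}_{q',\varphi}$ by point evaluation. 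Hence $T_f$ is a norm limit of compact operators and is therefore compact.
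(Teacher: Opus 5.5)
Your proposal is correct, but it follows a different route from the paper.

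\textbf{The paper's route.} The paper never dualizes. It partitions $\C^n$ into small cubes and sets $m_j=\int_{Q_j}|f|\,dv$ and $u_j=\sup_{Q_j}|u|e^{-\varphi}$. It then proves the pointwise bound
$|\widetilde T_fu(z)|e^{-\varphi(z)}\le C\sum_j e^{-c|z-a_j|}m_ju_j$
for the kernel integral $\widetilde T_fu$ directly on all of $F^p_\varphi$. Discrete Young and H\"older with $1/q=1/p+1/s$ give the norm bound. Compactness comes from truncating to finitely many cubes, where the error is the $\ell^s$ tail of $(m_j)$, together with an operator-norm-continuous integral of rank-one maps.

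\textbf{Your route.} You instead bound the form $\int fu\bar v\,e^{-2\varphi}\,dv$ by trilinear H\"older with $1/s+1/p+1/q'=1$. Your continuous local averages $U,V$ play the role of the sampled sequences.

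\textbf{Comparison.} The paper's approach uses only kernel decay. It needs no duality and no boundedness of $P$ on $L^{q'}_\varphi$, in particular no separate $L^\infty_\varphi$ argument at $q=1$. It also gives the extension as an absolutely convergent integral formula on all of $F^p_\varphi$. Your approach avoids the lattice discretization. Your compactness step, via ball truncation and normal families, is more elementary than the paper's operator-valued integral.

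\textbf{Two bookkeeping points.} First, $T_fu=P(fu)$ is a priori only in $F^1_\varphi$. So norm it for every $q$, not only for $q=1$, by testing against $\psi\in L^{q'}_\varphi$ and using $\langle P(fu),\psi\rangle_\varphi=\langle fu,P\psi\rangle_\varphi$. Fubini is justified by kernel decay and $fu\in L^1_\varphi$. Then apply your main estimate with $v=P\psi$. This makes the abstract identification $(F^q_\varphi)^*\cong F^{q'}_\varphi$ and the density of $\Gamma$ in $F^{q'}_\varphi$ unnecessary. Second, in the compactness step you use the form identity for general $u\in F^p_\varphi$, not only $u\in\Gamma$. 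That extension follows from your main estimate and the density of $\Gamma$ in $F^p_\varphi$.
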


\begin{proof}
Partition $\C^n$ into congruent half-open cubes $Q_j$, with centers
$a_j$ and diameter less than $r/2$. For $u\in F^p_\varphi$, set
\[
 m_j=\int_{Q_j}|f|\,dv,\qquad
 u_j=\sup_{w\in Q_j}|u(w)|e^{-\varphi(w)}.
\]
For $z\in Q_j$, we have $Q_j\subset B(z,r)$, and hence
$m_j\le C M_{1,r}f(z)$. Raising to the power $s$ and integrating over
$Q_j$ gives
$\norm{m}_{\ell^s}\le C\norm{M_{1,r}f}_s.$
The weighted holomorphic submean inequality and bounded overlap of
fixed enlargements of the cubes give
$\norm{(u_j)}_{\ell^p}\le C\norm{u}_{p,\varphi}.$

For $u\in F^p_\varphi$, consider the kernel integral
\[
 \widetilde T_fu(z)
 =\int_{\C^n}f(w)u(w)K(z,w)e^{-2\varphi(w)}\,dv(w).
\]
The kernel estimate gives
\[
 |\widetilde T_fu(z)|e^{-\varphi(z)}
 \le C\sum_j e^{-c|z-a_j|}m_ju_j.
\]
Since $m\in\ell^s$ and $(u_j)\in\ell^p$, both sequences are bounded,
and the exponential lattice sum is uniformly finite. Thus the integral
converges absolutely and locally uniformly in $z$, so
$\widetilde T_fu$ is entire. Discrete Young's inequality for the
exponential synthesis operator, followed by H\"older's inequality and
$1/q=1/p+1/s$, gives
\[
 \norm{\widetilde T_fu}_{q,\varphi}
 \le C\norm{(m_ju_j)}_{\ell^q}
 \le C\norm{m}_{\ell^s}\norm{(u_j)}_{\ell^p}
 \le C\norm{M_{1,r}f}_s\norm{u}_{p,\varphi}.
\]

For each $\zeta$, kernel decay and boundedness of $m$ give
\[
 \int_{\C^n}|f(w)k_\zeta(w)|e^{-\varphi(w)}\,dv(w)<\infty.
\]
Hence $f\in\Symb$. Therefore $\widetilde T_f$ agrees on $\Gamma$
with the initial Toeplitz operator and gives the required bounded
extension.

For a finite set $J$ of cube indices, let
$E_J=\bigcup_{j\in J}Q_j$. The same estimate yields
\[
 \norm{T_f-T_{f\chi_{E_J}}}_{p\to q}
 \le C\norm{(m_j)_{j\notin J}}_{\ell^s}\longrightarrow0
\]
as $J$ increases to the full lattice. It remains to show that each
$T_{f\chi_{E_J}}$ is compact.

Since $E_J$ is bounded,
\[
 T_{f\chi_{E_J}}
 =\int_{E_J} f(w)e^{-2\varphi(w)}
 \bigl[u\mapsto u(w)K(\cdot,w)\bigr]\,dv(w).
\]
The rank-one factor
\[
 w\mapsto\bigl[u\mapsto u(w)K(\cdot,w)\bigr]
\]
is bounded and continuous in operator norm from
$F^p_\varphi$ to $F^q_\varphi$ on the closure of $E_J$.
Indeed, Cauchy's estimate and the local submean bound give
operator-norm continuity of evaluations. Continuity of $K(\cdot,w)$ in
$F^q_\varphi$ follows from the kernel bounds and dominated convergence.
Thus the operator-valued integral is approximable in operator norm by
finite sums of rank-one operators, because
$f\in L^1(E_J)$. Hence $T_{f\chi_{E_J}}$ is compact.

The norm limit of these compact truncations is $T_f$, so $T_f$ is
compact and the stated norm estimate follows.
\end{proof}

\begin{proof}[Proof of Corollary~\ref{cor:unified-descending}]
By Lemma~\ref{lem:mixed-diagonal}, assertion \textup{(i)} implies
$\norm{b_f}_s\le C\norm{T_f^{p,q}}_{p\to q}.$
Applying Theorem~\ref{thm:lorentz} with $(\rho,\tau)=(s,s)$
and local exponent $\sigma$ shows that
\textup{(iii)}--\textup{(vi)} are equivalent and gives
\[
 \norm{M_{\sigma,r}f}_s
 \le C\bigl(\norm{b_f}_s+D\bigr).
\]
Conversely, $M_{1,r}f\le M_{\sigma,r}f$, so
Lemma~\ref{lem:local-one-descending} gives
\textup{(vi)}$\Rightarrow$\textup{(ii)} and
\[
 \norm{T_f^{p,q}}_{p\to q}
 \le C\norm{M_{\sigma,r}f}_s.
\]
Since \textup{(ii)} implies \textup{(i)}, all six assertions are
equivalent.

Moreover, $G_{\sigma,r}f\le M_{\sigma,r}f$, and hence
$D\le\norm{M_{\sigma,r}f}_s$. Combining the preceding estimates with
the joint norm comparisons in
Theorem~\ref{thm:lorentz} proves
\eqref{eq:unifiednorm}. The operators $T_f^{p,q}$ and $T_f^{2,2}$
are extensions of the same initial Toeplitz operator on $\Gamma$,
but act on different spaces.
\end{proof}

\subsection{Counting functions}

For $0<\rho<\infty$ and compact operators between Hilbert spaces, write
\[
 \Sch_{\rho,\infty}^{0}
 =\{A:\lim_{k\to\infty}k^{1/\rho}s_k(A)=0\}.
\]
Equivalently, $A\in\Sch_{\rho,\infty}^{0}$ if
$N(t;A)=o(t^{-\rho})$ as $t\downarrow0$.

\begin{proposition}[Counting functions and the analytic-distance remainder]
\label{prop:counting-transfer}
Under the assumptions of Theorem~\ref{thm:lorentz}, suppose one of
its equivalent conditions holds. Use its decomposition $f=g+h$, and
put
$P_g=M_g^*M_g=T_{|g|^2},\qquad R=H_g-JT_h.$
Then $R\in\Sch_{\rho,\tau}$,
$\|R\|_{\Sch_{\rho,\tau}}\le CD_{\rho,\tau},$
and, for all $\varepsilon,t>0$,
\[
 \begin{split}
 N((1+\varepsilon)t;T_f)
 &\le N(t^2;P_g)+N(\varepsilon t;R),\\
 N((1+\varepsilon)^2t^2;P_g)
 &\le N(t;T_f)+N(\varepsilon t;R).
 \end{split}
\]
Moreover,
\[
 N(\varepsilon t;R)
 \le C(\varepsilon t)^{-\rho}D_{\rho,\tau}^{\rho}.
\]

For the small cubes $Q_j$ of Lemma~\ref{lem:positive-lorentz}, put
\[
 m_j=\int_{Q_j}|g|^2\,dv,\qquad
 D_g(t)=\#\{j:m_j>t^2\}.
\]
There are constants $c,C>0$ such that
\begin{equation}\label{eq:complexcounts}
 \begin{split}
 D_g(t)
 &\le C N(ct;T_f)+C N(ct;R),\\
 N(Ct;T_f)
 &\le C_\eta t^{-2\eta}
    \sum_{m_j>t^2}m_j^\eta+N(t;R),
 \qquad 0<\eta<\min\{\rho/2,1\}.
 \end{split}
\end{equation}

If $0<\tau<\infty$, then, for every fixed $\varepsilon>0$,
\[
 N(\varepsilon t;R)=o(t^{-\rho})
 \qquad(t\downarrow0),
\]
and
\begin{equation}\label{eq:lorentz-remainder-distribution}
 \left(\int_0^\infty
 [tN(t;R)^{1/\rho}]^\tau\,\frac{dt}{t}\right)^{1/\tau}
 \le CD_{\rho,\tau}.
\end{equation}

For an exact leading term, suppose separately that on $\Gamma$
$M_g=JT_f+R,$
where $M_g$ has a compact extension and
$R\in\Sch_{\rho,\infty}^{0}$. Then $T_f$ has a compact extension, and
\begin{equation}\label{eq:exact-counting-transfer}
 N(t^2;M_g^*M_g)\sim Lt^{-\rho},\quad L>0
 \quad\Longrightarrow\quad
 N(t;T_f)\sim Lt^{-\rho}.
\end{equation}
For the decomposition above, it is enough to assume
\begin{equation}\label{eq:mixed-counting-hypotheses}
 M_{q,r}f\in L^{\rho,\infty},\qquad
 \lim_{u\to\infty}u^{1/\rho}(G_{q,r}f)^*(u)=0,
\end{equation}
together with the standing weight and domain assumptions.
The first condition implies $G_{q,r}f\in L^{\rho,\infty}$, since
$G_{q,r}f\le M_{q,r}f$. The second condition follows, in particular,
from
\[
 G_{q,r}f\in L^{\rho,\tau_0}
 \qquad\text{for some }0<\tau_0<\infty.
\]
Neither $M_g$ nor $T_f$ is required to belong to
$\Sch_{\rho,\infty}^{0}$.
\end{proposition}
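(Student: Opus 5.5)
The plan starts from the column identity on $\Gamma$. Since $f=g+h$, we have $M_g=M_f-M_h$. Using $M_f=JT_f+H_f$ and $M_h=JT_h+H_h$, together with $H_f=H_g+H_h$, gives
\[
 M_g=JT_g+H_g=JT_f-JT_h+H_g=JT_f+R,
\]
with $R=H_g-JT_h$.

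\emph{Size of the remainder.} Lemma~\ref{lem:lorentz-hankel}, applied to $g$, gives $\|H_g\|_{\Sch_{\rho,\tau}}\le C\|G_{2,t}g\|_{\rho,\tau}$. This is at most $CD_{\rho,\tau}$ by \eqref{eq:lorentz-general-decomp}. Equation \eqref{eq:lorentz-remainder} gives $\|T_h\|_{\Sch_{\rho,\tau}}\le CD_{\rho,\tau}$. The Lorentz quasi-triangle inequality then yields $\|R\|_{\Sch_{\rho,\tau}}\le CD_{\rho,\tau}$. Since $M_g$ is bounded by the local-$L^2$ part of the proof of Theorem~\ref{thm:lorentz}, the identity extends from $\Gamma$ to $F^2_\varphi$.

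\emph{Two-sided counting inequalities.} I will use the Ky Fan inequality $s_{j+k-1}(A+B)\le s_j(A)+s_k(B)$ in counting form, $N(a+b;A+B)\le N(a;A)+N(b;B)$. The operator $J$ is isometric, so $s_k(JT_f)=s_k(T_f)$. Also $s_k(M_g)^2=s_k(P_g)$, which gives $N(t;M_g)=N(t^2;P_g)$.
\begin{itemize}
\item Writing $JT_f=M_g-R$ gives $N((1+\varepsilon)t;T_f)\le N(t;M_g)+N(\varepsilon t;R)$.
\item Writing $M_g=JT_f+R$ gives $N((1+\varepsilon)t;M_g)\le N(t;T_f)+N(\varepsilon t;R)$.
\end{itemize}
The bound $N(\varepsilon t;R)\le C(\varepsilon t)^{-\rho}D^\rho_{\rho,\tau}$ follows from $\Sch_{\rho,\tau}\hookrightarrow\Sch_{\rho,\infty}$, since $k^{1/\rho}s_k(R)\le C\|R\|_{\rho,\tau}$.

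For \eqref{eq:complexcounts}, apply \eqref{eq:positivecountlower} and \eqref{eq:positivecountupper} to $\mu=|g|^2dv$, with $a=\rho/2$. This gives $D_g(t)\le LN(ct^2;P_g)=LN(c^{1/2}t;M_g)$, and then the Ky Fan splitting of $M_g$ bounds the right side by $N(\cdot;T_f)+N(\cdot;R)$ after adjusting constants. The upper bound goes the other way, through $N(Ct;T_f)\le N(C't;M_g)+N(t;R)$ followed by \eqref{eq:positivecountupper} at level $t^2$. The hypothesis $(m_j)\in\ell^{\rho/2,\tau/2}$ needed there comes from $M_{2,r}g\in L^{\rho,\tau}$.

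\emph{Case $\tau<\infty$ and the distribution formula.} For $\tau<\infty$, the sequence $s_k(R)$ lies in $\ell^{\rho,\tau}\subset$ (the closure of finite sequences in $\ell^{\rho,\infty}$). Hence $k^{1/\rho}s_k(R)\to0$, which is exactly $N(\varepsilon t;R)=o(t^{-\rho})$. The distribution-function formula for $\ell^{\rho,\tau}$ quasinorms, already quoted in Lemma~\ref{lem:positive-lorentz}, gives \eqref{eq:lorentz-remainder-distribution}.

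\emph{Exact leading term.} Assume $M_g=JT_f+R$ with $M_g$ compact and $R\in\Sch^0_{\rho,\infty}$. Then $JT_f=M_g-R$ is compact, and so is $T_f=J^*JT_f$. The two counting inequalities then give
\[
\limsup_{t\downarrow0}t^\rho N((1+\varepsilon)t;T_f)\le L+o(1).
\]
This means $\limsup t^\rho N(t;T_f)\le(1+\varepsilon)^\rho L$, and similarly $\liminf t^\rho N(t;T_f)\ge(1+\varepsilon)^{-\rho}L$. Letting $\varepsilon\to0$ proves \eqref{eq:exact-counting-transfer}.

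\emph{Sufficiency of \eqref{eq:mixed-counting-hypotheses}: the main obstacle.} The difficulty is that Theorem~\ref{thm:lorentz} is not directly available with $\tau=\infty$ to give $R\in\Sch^0_{\rho,\infty}$. I would redo the estimates of Lemma~\ref{lem:lorentz-hankel} and \eqref{eq:lorentz-remainder} in the separable part of $L^{\rho,\infty}$. The key property is that both the positive estimate \eqref{eq:positive-rearrangement-upper} and the power-convexified averaging preserve the condition $u^{1/\rho}w^*(u)\to0$. Concretely:
\begin{itemize}
\item Apply \eqref{eq:positive-rearrangement-upper} with $\eta<\rho/2$ to the cube masses of $|\bar\partial g_1|^2$, $|g_2|^2$ and $|h|$. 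These masses are controlled by $G_{q,r}f$, whose $o$-weak decay passes to Ces\`aro $\eta$-means because $\eta<\rho$. This yields $s_k(H_g),s_k(T_h)=o(k^{-1/\rho})$.
\item Compactness of $M_g$ comes from $M_{2,r}g\in L^{\rho,\infty}$, via \eqref{eq:sizereverse} and Lemma~\ref{lem:positive-lorentz} with $b=\infty$.
\item The implication $G\in L^{\rho,\tau_0}\Rightarrow u^{1/\rho}G^*(u)\to0$ is the standard embedding.
\end{itemize}
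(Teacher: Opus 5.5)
Your proposal is correct and follows the paper's proof essentially step for step. The shared steps are the column identity $M_g=JT_f+R$, the counting inequality $N(a+b;A+B)\le N(a;A)+N(b;B)$, the positive counting bounds \eqref{eq:positivecountlower}--\eqref{eq:positivecountupper} for $|g|^2\,dv$ with $a=\rho/2$, and the $\varepsilon$-squeeze for the exact leading term. For \eqref{eq:mixed-counting-hypotheses} you also use the same passage to $\Sch^{0}_{\rho,\infty}$, via \eqref{eq:positive-rearrangement-upper} and the form inequalities of Lemma~\ref{lem:lorentz-hankel}.

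The details you leave implicit are routine, and the paper writes them out:
\begin{itemize}
\item the condition $u^{1/\rho}v^*(u)\to0$ survives the radius changes, because $(\sum_{\nu\le\ell}v_\nu)^*(u)\le\sum_\nu v_\nu^*(u/\ell)$;
\item $\Sch^{0}_{\rho,\infty}$ is closed under finite sums;
\item the Ces\`aro step for the squared masses needs $\eta<\rho/2$, which you chose, not merely $\eta<\rho$ as you wrote.
\end{itemize}
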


\begin{proof}
On $\Gamma$,
$M_g=JT_f+R,$
and the identity extends to $F^2_\varphi$. Since
$J:F^2_\varphi\to L^2_\varphi$ is isometric,
\[
 s_j(JT_f)=s_j(T_f),\qquad
 s_j(M_g)^2=s_j(P_g).
\]
Applying
$N(a+b;A+B)\le N(a;A)+N(b;B)$
to $JT_f=M_g-R$ and to $M_g=JT_f+R$ gives the two displayed counting
inequalities. By
$\Sch_{\rho,\tau}\hookrightarrow\Sch_{\rho,\infty},$
\[
 N(\varepsilon t;R)
 \le C(\varepsilon t)^{-\rho}
    \|R\|_{\Sch_{\rho,\tau}}^\rho.
\]
The Lorentz Hankel estimate and
\eqref{eq:lorentz-remainder} therefore give
\[
 \|R\|_{\Sch_{\rho,\tau}}
 \le CD_{\rho,\tau}.
\]

We next prove \eqref{eq:complexcounts}. By
\eqref{eq:positivecountlower},
$D_g(t)\le C N(ct^2;P_g)=C N(c^{1/2}t;M_g).$
Using
$N(2t;M_g)\le N(t;T_f)+N(t;R)$
and changing constants gives the first estimate in
\eqref{eq:complexcounts}.

For the second, let $C_0$ be the threshold constant in
\eqref{eq:positivecountupper}. Then
\[
 N((\sqrt{C_0}+1)t;T_f)
 \le N(\sqrt{C_0}t;M_g)+N(t;R)
 =N(C_0t^2;P_g)+N(t;R).
\]
Applying \eqref{eq:positivecountupper} with threshold $t^2$ yields
the required upper bound.

Formula \eqref{eq:lorentz-remainder-distribution} is the distribution
form of the Lorentz--Schatten quasi-norm. If $\tau<\infty$, then
$k^{1/\rho}s_k(R)\longrightarrow0.$
Equivalently,
$N(t;R)=o(t^{-\rho})\qquad(t\downarrow0),$
and hence the same holds with $t$ replaced by $\varepsilon t$.

For the separate exact-asymptotic assertion,
$M_g-R$ is compact. Its range is contained in the closed subspace
$JF^2_\varphi$, because this is true on the dense domain $\Gamma$.
Thus it defines a compact operator $JT_f$.
Since $R\in\Sch_{\rho,\infty}^{0}$,
\[
 N(\varepsilon t;R)=o(t^{-\rho})
 \qquad(t\downarrow0)
\]
for every fixed $\varepsilon>0$. The preceding counting inequalities
therefore imply
\[
 L(1+\varepsilon)^{-\rho}
 \le \liminf_{t\downarrow0}t^\rho N(t;T_f)
 \le \limsup_{t\downarrow0}t^\rho N(t;T_f)
 \le L(1+\varepsilon)^\rho.
\]
Letting $\varepsilon\downarrow0$ proves
\eqref{eq:exact-counting-transfer}.

It remains to verify the sufficient symbol conditions in
\eqref{eq:mixed-counting-hypotheses}. We first note that if
$v\in L^{a,\infty}$ and
$u^{1/a}v^*(u)\to0$ as $u\to\infty$,
then this property is preserved by pointwise domination, translations,
and finite sums. For the last assertion,
\[
 \left(\sum_{\nu=1}^{\ell}v_\nu\right)^*(u)
 \le\sum_{\nu=1}^{\ell}v_\nu^*(u/\ell).
\]
The finite-cover argument in Lemma~\ref{lem:lorentz-radius} gives,
for every fixed $R>0$, a pointwise bound of $G_{q,R}f$ by a finite
sum of translates of $G_{q,r}f$. Hence the second condition in
\eqref{eq:mixed-counting-hypotheses} holds at every fixed radius.

We also need the corresponding statement for positive Toeplitz
operators. Suppose
$\mu(B(\cdot,s))\in L^{a,\infty}$
and $u^{1/a}(\mu(B(\cdot,s)))^*(u)\to0$ as $u\to\infty$.
Choose the cubes of Lemma~\ref{lem:positive-lorentz} with diameter
less than $s$, and put $x_j=\mu(Q_j)$. For $z\in Q_j$,
$x_j\le\mu(B(z,s)).$
Since the cubes have equal volume,
$k^{1/a}x_k^*\longrightarrow0.$
The positive criterion gives compactness of $T_\mu$. For
$0<\eta<\min\{a,1\}$,
\eqref{eq:positive-rearrangement-upper} gives
\[
 k^{\eta/a}s_k(T_\mu)^\eta
 \le C k^{\eta/a-1}
    \sum_{j\le k}(x_j^*)^\eta.
\]
Given $\epsilon>0$, choose $J$ so that
$x_j^*\le\epsilon j^{-1/a},\qquad j>J.$
Since $\eta<a$,
\[
 k^{\eta/a-1}\sum_{j\le J}(x_j^*)^\eta\longrightarrow0,
\]
while
\[
 k^{\eta/a-1}
 \sum_{J<j\le k}(x_j^*)^\eta
 \le C_{a,\eta}\epsilon^\eta.
\]
Thus
$k^{1/a}s_k(T_\mu)\longrightarrow0,$
that is,
$T_\mu\in\Sch_{a,\infty}^{0}.$

Under \eqref{eq:mixed-counting-hypotheses},
Theorem~\ref{thm:lorentz} with $(\rho,\infty)$ gives compact
extensions of $T_f$ and $M_g$. Use the decomposition $f=g+h$ at a
fixed sufficiently small auxiliary radius $t>0$.
The pointwise estimates
\eqref{eq:q-decomp-error} and \eqref{eq:q-to-two}, together with the
preceding radius comparison, show that
\[
 M_{q,t/2}h
 \quad\text{and}\quad
 G_{2,t}g
\]
satisfy the same rearrangement limit as $G_{q,r}f$.
Lemma~\ref{lem:lorentz-radius}
also gives their boundedness, so the original domain argument applies:
$g,h\in\Symb,\qquad gk_z\in L^2_\varphi.$

The fixed-ball masses of $|h|\,dv$ are bounded by
$CM_{q,t/2}h$. Applying the positive-operator statement with
$a=\rho$ and using the factorization from
\eqref{eq:lorentz-remainder} gives
\[
 s_{2k-1}(T_h)\le s_k(T_{|h|}),\qquad
 T_h\in\Sch_{\rho,\infty}^{0}.
\]

Apply the decomposition in Lemma~\ref{lem:lorentz-hankel} to $g$.
The associated measures
\[
 d\mu_1=|\bar\partial g_1|^2\,dv,\qquad
 d\mu_2=|g_2|^2\,dv
\]
have fixed-ball masses bounded by
$C(G_{2,t}g)^2$
after decreasing the auxiliary radius if necessary. Hence their
local masses satisfy the rearrangement limit above with $a=\rho/2$.
The preceding
positive-operator argument, with $a=\rho/2$, and the form estimates
from Lemma~\ref{lem:lorentz-hankel} give
\[
 s_k(H_{g_j})^2\le C s_k(T_{\mu_j}),\qquad
 H_{g_j}\in\Sch_{\rho,\infty}^{0},
 \quad j=1,2.
\]
Since
$s_{2k-1}(A+B)\le s_k(A)+s_k(B),$
the class $\Sch_{\rho,\infty}^{0}$ is stable under finite sums.
Therefore
$R=H_g-JT_h\in\Sch_{\rho,\infty}^{0}.$

Finally, if $v\in L^{\rho,\tau_0}$ with $0<\tau_0<\infty$, then
monotonicity of $v^*$ gives
\[
 [u^{1/\rho}v^*(u)]^{\tau_0}
 \le C\int_{u/2}^{u}
 [s^{1/\rho}v^*(s)]^{\tau_0}\,\frac{ds}{s}
 \longrightarrow0.
\]
This proves the stated finite-secondary-index sufficient condition.
\end{proof}

\subsection{Regular decay envelopes}

The next result treats decay rates more general than powers. Related positive local-mass and one-sided Hankel estimates in the
radial setting appear in \cite{HWZ}.
We prove the estimates for fixed Euclidean balls. We then combine
them with the local-size estimates for complex Toeplitz symbols.
Each criterion retains the analytic-distance term.

Indeed, in the classical one-dimensional Fock space, let
$0\ne\chi\in C_c^\infty(\C)$ be radial and nonnegative. For every
fixed $q\ge1$ and $r>0$, both $M_{q,r}\chi$ and $G_{q,r}\chi$ have
rearrangements that vanish for sufficiently large arguments, whereas
$T_\chi$ has the strictly positive monomial eigenvalues
\[
 \lambda_k=\frac1{k!}\int_0^\infty
 \chi(\sqrt t)t^ke^{-t}\,dt,\qquad k\ge0.
\]
Thus $T_\chi$ has infinite rank. Hence no two-sided estimate of the form
$s_k(T_f)\asymp (M_{q,r}f)^*(ck)$
can hold in general, even after adding
$(G_{q,r}f)^*(ck)$ to the right-hand side.

Let $\Psi:[1,\infty)\to(0,\infty)$ be nondecreasing and unbounded.
Assume that $\Psi$ has finite upper type: for some $\gamma>0$ and
$C_\Psi\ge1$,
\begin{equation}\label{eq:upper-type}
 \Psi(\lambda t)\le C_\Psi\lambda^\gamma\Psi(t),
 \qquad t\ge1,\quad\lambda\ge1.
\end{equation}
After replacing $\Psi$ by a comparable function, we may also assume
that $\Psi(t)/t^\gamma$ is nonincreasing. Indeed,
\[
 \Psi_0(t)
 =t^\gamma\inf_{1\le s\le t}\frac{\Psi(s)}{s^\gamma}
\]
is nondecreasing,
$C_\Psi^{-1}\Psi(t)\le\Psi_0(t)\le\Psi(t),$
and $\Psi_0(t)/t^\gamma$ is nonincreasing. This is the
monotone-quotient form used in \cite{FWZ,HWZ}.

For a measurable function $v$ and a compact operator $A$, define
\[
 \|v\|_{\mathcal M_\Psi}
 =\sup_{t>0}\Psi(1+t)v^*(t),
 \qquad
 \|A\|_{\mathcal S_\Psi}
 =\sup_{k\ge1}\Psi(k)s_k(A).
\]
The shift $1+t$ controls both boundedness and decay at infinity. For
example, when $\Psi(t)=t^{1/\rho}$,
$\|\cdot\|_{\mathcal M_\Psi}$ is equivalent to the joint
$L^\infty$ and weak-$L^\rho$ gauge.

We use the following consequence of \eqref{eq:upper-type}. Choose
$0<\theta<\min\{1,(2\gamma)^{-1}\}$
and put $\Psi_\theta=\Psi^\theta$. Then
\begin{equation}\label{eq:upper-type-hardy}
 \frac1t\int_0^t\frac{ds}{\Psi_\theta(1+s)}
 \le \frac{C}{\Psi_\theta(1+t)},\qquad t>0.
\end{equation}
Indeed, for $0<s<t$, \eqref{eq:upper-type} gives
\[
 \frac1{\Psi_\theta(1+s)}
 \le C\left(\frac{1+t}{1+s}\right)^{\gamma\theta}
    \frac1{\Psi_\theta(1+t)},
\]
and \eqref{eq:upper-type-hardy} follows from $\gamma\theta<1$.
The smaller choice of $\theta$ also allows the same argument for $\Psi^2$.

Let $U$ be a positive linear operator bounded on both $L^1$ and
$L^\infty$. Calder\'on's rearrangement estimate
\cite[Chapter~2]{BennettSharpley}, together with
\eqref{eq:upper-type-hardy}, gives
\begin{equation}\label{eq:envelope-averaging}
 \|Uu\|_{\mathcal M_{\Psi_\theta}}
 \le C_U\|u\|_{\mathcal M_{\Psi_\theta}}.
\end{equation}
For sequences, use the gauge $\sup_{k\ge1}\Psi(k)^\theta x_k^*$.
The same estimate holds for positive operators on $\ell^1$ and
$\ell^\infty$. Applying
\eqref{eq:envelope-averaging} to $|v|^\theta$ yields
\begin{equation}\label{eq:nonlinear-envelope-averaging}
 \left\|(U|v|^\theta)^{1/\theta}\right\|_{\mathcal M_\Psi}
 \le C_U\|v\|_{\mathcal M_\Psi}.
\end{equation}

We shall refer to \eqref{eq:nonlinear-envelope-averaging} as the
upper-type averaging lemma. It applies to the fixed-ball averages used
below. Exponentially weighted sums and finite sums of translates are
handled after taking the $\theta$th power. The same argument, combined
with the finite-cover and decomposition estimates from
Lemma~\ref{lem:lorentz-radius}, gives fixed-radius equivalence for
$M_{q,r}$ and $G_{q,r}$ on $\mathcal M_\Psi$.

\begin{theorem}[Regular singular-value decay]
\label{thm:regular-decay}
Assume \eqref{eq:weight}. Let $\Psi$ be as above and satisfy
\eqref{eq:upper-type}. Let $1\le q<\infty$, $r>0$, and
$f\in\Symb\cap L^q_{\rm loc}$. Set $\|T_f\|_{\mathcal S_\Psi}=+\infty$
if the initial operator has no compact extension to $F^2_\varphi$.
Then, with function gauges allowed to take the value $+\infty$,
\begin{equation}\label{eq:regular-decay-equivalence}
 \begin{split}
 \|T_f\|_{\mathcal S_\Psi}+\|G_{q,r}f\|_{\mathcal M_\Psi}
 &\asymp\|M_{q,r}f\|_{\mathcal M_\Psi}\\
 &\asymp\|a_rf\|_{\mathcal M_\Psi}
    +\|G_{q,r}f\|_{\mathcal M_\Psi}\\
 &\asymp\|b_f\|_{\mathcal M_\Psi}
    +\|G_{q,r}f\|_{\mathcal M_\Psi}.
 \end{split}
\end{equation}
The constants depend on the geometric parameters, $q,r$, and the
upper-type data of $\Psi$, but not on $f$.
\end{theorem}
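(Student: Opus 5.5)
The plan is to rerun the proof of Theorem~\ref{thm:lorentz} with the Lorentz quasi-norms replaced by the gauges $\mathcal M_\Psi$ and $\mathcal S_\Psi$. The convexified Banach-lattice averaging in that proof is replaced by the upper-type averaging lemma \eqref{eq:nonlinear-envelope-averaging}.

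\textbf{Step 1 (scalar estimates).} I first prove the scalar comparisons, which need no operator input. The pointwise bound \eqref{eq:meanrecovery} with exponent $t=\theta$ expresses $M_{q,r}f$ through $G_{q,4r}f$ and a $\theta$-average of $|a_rf|$. Applying \eqref{eq:nonlinear-envelope-averaging} to the fixed-ball average gives
\[
 \|M_{q,r}f\|_{\mathcal M_\Psi}\lesssim\|a_rf\|_{\mathcal M_\Psi}+\|G_{q,r}f\|_{\mathcal M_\Psi}.
\]
The radius change in $G$ is handled by the finite-cover equivalence noted before the theorem. The lattice bound $|b_f(z)|\le C\sum_k e^{-c|k|}M_{q,r}f(z+\delta k)$ from the proof of Theorem~\ref{thm:lorentz} is raised to the power $\theta$. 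Since this exponential sum is a positive operator bounded on $L^1$ and $L^\infty$, it gives $\|b_f\|_{\mathcal M_\Psi}\lesssim\|M_{q,r}f\|_{\mathcal M_\Psi}$. The trivial bounds $|a_rf|,G_{q,r}f\le M_{q,r}f$ give the reverse directions.

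\textbf{Step 2 (positive operators).} Next I need a $\Psi$-version of Lemma~\ref{lem:positive-lorentz}. For necessity, \eqref{eq:positivecountlower} gives $\#\{j:m_j>t\}\le LN(ct;T_\mu)$. If $\Psi(k)s_k(T_\mu)\le A$, this bounds the cube-mass rearrangement, since $\Psi$ is doubling by upper type. For sufficiency, I use \eqref{eq:positive-rearrangement-upper} with $\eta$ small together with the Hardy-type bound \eqref{eq:upper-type-hardy}. This gives
\[
 s_k(T_\mu)\lesssim\Bigl(\tfrac1k\sum_{j\le k}(m_j^*)^\eta\Bigr)^{1/\eta}\lesssim\|m\|_{\Psi}/\Psi(k).
\]
I expect this to require $\eta\gamma<1$, and the same for $\Psi^2$ and $\Psi^{1/2}$ where the exponents are halved or doubled. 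The fact that $\mathcal M_\Psi$ uses $\Psi(1+t)$ supplies the $L^\infty$ bound needed for boundedness of $T_\mu$ through the Carleson criterion.

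\textbf{Step 3 (decomposition and Hankel terms).} Finiteness of $\|G_{q,r}f\|_{\mathcal M_\Psi}$ gives boundedness and vanishing at infinity because $\Psi$ is unbounded, so $f\in\VDA^q\subset\BDA^q$. Lemma~\ref{lem:q-decomposition} then applies and gives $f=g+h$. Then I bound each piece.
\begin{itemize}
\item For $T_h$, the factorization $T_h=A^*M_{\operatorname{sgn}h}A$ gives $s_{2k-1}(T_h)\le s_k(T_{|h|})$. Combined with Step 2, this bounds $\|T_h\|_{\mathcal S_\Psi}$ by $\|M_{q,t/2}h\|_{\mathcal M_\Psi}\lesssim\|G_{q,r}f\|_{\mathcal M_\Psi}$, using $\Psi(2k)\lesssim\Psi(k)$.
\item For $g$, I copy Lemma~\ref{lem:lorentz-hankel}: the forms give $s_k(H_{g_j})^2\le Cs_k(T_{\mu_j})$, and Step 2 applied with $\Psi^2$ yields $\|H_g\|_{\mathcal S_\Psi}+\|E_g\|_{\mathcal M_\Psi}\lesssim\|G_{2,t}g\|_{\mathcal M_\Psi}$.
\item Also $M_g^*M_g=T_{|g|^2}$, so $\|M_g\|_{\mathcal S_\Psi}\asymp\|M_{2,r}g\|_{\mathcal M_\Psi}$ by Step 2 with $\Psi^2$.
\end{itemize}

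\textbf{Step 4 (assembly).} The quotient argument of Lemma~\ref{lem:mixed-recovery}, with $\theta$-submean and averaging, gives $\|M_{2,r}g\|_{\mathcal M_\Psi}\lesssim\|b_g\|_{\mathcal M_\Psi}+\|E_g\|_{\mathcal M_\Psi}$. Combining this with $M_g=JT_g+H_g$, $T_f=T_g+T_h$, the Ky Fan inequality $s_{2k-1}(A+B)\le s_k(A)+s_k(B)$ and doubling of $\Psi$, and \eqref{eq:sizeforward}--\eqref{eq:sizereverse}, closes the chain exactly as in Theorem~\ref{thm:lorentz}. The one extra point is compactness: if the right side is finite, $T_f$ is compact because $s_k\to0$ follows from the approximation in Step 2. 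If the left side is finite, Step 1 and the Berezin bound $|b_f|$ control things through $T_f$. For the direction from $\|T_f\|_{\mathcal S_\Psi}$ to $M_{q,r}f$, I use $M_g=JT_f+R$ with $R=H_g-JT_h$ bounded by $G$.

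The main obstacle is Step 2 in the $\Psi$-scale, namely the necessity bound from counting functions and the Hardy averaging with only upper-type control and the $1+t$ shift. The rest transfers mechanically once it is in place.
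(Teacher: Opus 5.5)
Your proposal is correct and follows essentially the same route as the paper. Both arguments proceed in the same order:
\begin{enumerate}
\item a $\Psi$-version of the positive estimate, from the counting lower bound plus the threshold decomposition with $\eta\gamma<1$;
\item the decomposition $f=g+h$, with $T_h$ handled by factorization and with $H_g$ and $M_g$ handled by the positive estimate for $\Psi^2$;
\item the identity $M_g=JT_f+(H_g-JT_h)$ for the reverse operator bound;
\item the quotient argument with $b_g=b_f-b_h$ for the Berezin test.
\end{enumerate}

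There are two minor differences. First, the paper gets $\|T_f\|_{\mathcal S_\Psi}\lesssim\|M_{q,r}f\|_{\mathcal M_\Psi}$ directly from the global factorization $T_f=A^*M_{\operatorname{sgn}f}A$, with $A$ multiplication by $|f|^{1/2}$, rather than going through the decomposition. Second, the paper re-derives \eqref{eq:positive-rearrangement-upper} rather than citing it. As stated, that estimate assumes $(m_j)\in\ell^{a,b}$, which fails for slowly growing $\Psi$ such as $[\log(e+t)]^\beta$. The re-derivation works because the high-mass part at threshold $x_k^*$ involves fewer than $k$ cubes, so no Lorentz hypothesis is needed.
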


Equivalently, $(M_{q,r}f)^*(t)=O(\Psi(1+t)^{-1})$ if and only if
\[
 s_k(T_f)=O(\Psi(k)^{-1})
 \quad\text{and}\quad
 (G_{q,r}f)^*(t)=O(\Psi(1+t)^{-1}),
\]
where the function bounds are uniform for $t>0$ and the sequence
bound is uniform for $k\ge1$. The local-size condition may be replaced
by the corresponding joint condition involving $a_rf$ or $b_f$. The choice
$\Psi(t)=t^{1/\rho}$ gives the weak Schatten endpoint. The theorem
also applies to
$\Psi(t)=[\log(e+t)]^\beta\quad(\beta>0)$
and to
\[
 \Psi(t)=t^\alpha[\log(e+t)]^\beta,
 \qquad \alpha>0,\quad\beta\in\mathbb R,
\]
after a monotone modification on a bounded interval. The case
$\alpha=0$ is included when $\beta>0$.

\begin{proof}
We first prove the positive counterpart. Let $\mu$ be a positive
locally finite measure, let $Q_j$ be the cubes from
Lemma~\ref{lem:positive-lorentz}, and put $x_j=\mu(Q_j)$. Then
\begin{equation}\label{eq:positive-envelope}
 \|T_\mu\|_{\mathcal S_\Psi}
 \asymp\sup_{k\ge1}\Psi(k)x_k^*
 \asymp\|\mu(B(\cdot,r))\|_{\mathcal M_\Psi}
 \asymp\|\widetilde\mu\|_{\mathcal M_\Psi}.
\end{equation}
Indeed, \eqref{eq:positivecountlower} gives
$x_j^*\le C s_{\lceil j/L\rceil}(T_\mu),\qquad j\ge1.$
Since $j\le L\lceil j/L\rceil$, \eqref{eq:upper-type} yields the
corresponding lower estimate in \eqref{eq:positive-envelope}.

For the reverse estimate, assume $0<W=\sup_k\Psi(k)x_k^*<\infty$
and normalize $W=1$; the case $W=0$ is immediate.
Then $x_k^*\le\Psi(k)^{-1}\to0$. The positive Carleson criterion
gives boundedness, and truncating to finitely many cubes gives
compactness: the norm of the omitted part is bounded by its largest
cube mass. Choose
$0<\eta<\min\{1,\gamma^{-1}\}.$
The threshold decomposition used in
Lemma~\ref{lem:positive-lorentz} gives
\[
 s_k(T_\mu)\le C
 \left(\frac1k\sum_{j\le k}(x_j^*)^\eta\right)^{1/\eta},
\]
The high-mass part uses only finitely many cubes, so its strong
$\mathcal S_\eta$ estimate requires no Lorentz hypothesis. If
$x_k^*=0$, the whole measure has fewer than $k$ positive cube masses,
and the same strong estimate gives the displayed bound. For $j\le k$,
\eqref{eq:upper-type} gives
\[
 \Psi(j)^{-\eta}
 \le C(k/j)^{\gamma\eta}\Psi(k)^{-\eta}.
\]
Since $\gamma\eta<1$,
\[
 \sum_{j\le k}\Psi(j)^{-\eta}
 \le Ck\Psi(k)^{-\eta},
\]
and hence
\[
 \sup_k\Psi(k)s_k(T_\mu)
 \le C\sup_k\Psi(k)x_k^*.
\]
Finite coverings give the local-mass equivalence in
\eqref{eq:positive-envelope}. Moreover, for $z\in Q_j$,
\[
 cx_j\le\widetilde\mu(z)
 \le C\sum_i e^{-c|z_i-z_j|}x_i.
\]
After taking the $\theta$th power, the sequence form of
\eqref{eq:envelope-averaging} controls the exponential convolution.
This proves \eqref{eq:positive-envelope}.

Suppose first that
$\|M_{q,r}f\|_{\mathcal M_\Psi}<\infty.$
For $d\mu=|f|\,dv$,
$\mu(B(z,r))\le C M_{q,r}f(z).$
Hence \eqref{eq:positive-envelope} gives
$T_\mu\in\mathcal S_\Psi$. Let
$A:F^2_\varphi\to L^2_\varphi$ be multiplication by $|f|^{1/2}$.
Then $A^*A=T_\mu$. With
$\operatorname{sgn}f=f/|f|$ on $\{f\ne0\}$ and zero elsewhere,
$T_f=A^*M_{\operatorname{sgn}f}A$
on $\Gamma$. Thus
\[
 s_{2k-1}(T_f)
 \le s_k(A)^2=s_k(T_\mu).
\]
Since $\Psi(2k)\le C\Psi(k)$,
\[
 \|T_f\|_{\mathcal S_\Psi}
 \le C\|M_{q,r}f\|_{\mathcal M_\Psi}.
\]
Also $G_{q,r}f\le M_{q,r}f$, and therefore
\[
\|T_f\|_{\mathcal S_\Psi}
 +\|G_{q,r}f\|_{\mathcal M_\Psi}
 \le C\|M_{q,r}f\|_{\mathcal M_\Psi}.
\]

For the reverse estimates, assume
$D_\Psi=\|G_{q,r}f\|_{\mathcal M_\Psi}<\infty$.
We first construct the decomposition without assuming $T_f$ bounded.
Use Lemma~\ref{lem:q-decomposition} to write $f=g+h$. Fixed-radius equivalence for
the envelope gauge gives, at a sufficiently small fixed radius $r_0$,
\[
 G_{2,r_0}g+M_{q,r_0/2}h\le C\Delta,
 \qquad
 \Delta=G_{q,16r_0}f,
\]
with
$\|\Delta\|_{\mathcal M_\Psi}\le CD_\Psi.$
Finiteness of the envelope gauge also gives the boundedness required
in the domain argument; hence $g,h\in\Symb$ and
$gk_z\in L^2_\varphi$.

Apply the argument of Lemma~\ref{lem:lorentz-hankel} to $g$.
The two positive measures arising there have fixed-ball masses bounded
by $C\Delta^2$. Applying \eqref{eq:positive-envelope} to $\Psi^2$, which has upper
type $2\gamma$, and using the form inequalities gives
\[
 H_g\in\mathcal S_\Psi,\qquad
 \|H_g\|_{\mathcal S_\Psi}
 \le C\|\Delta\|_{\mathcal M_\Psi}.
\]
Likewise, the fixed-ball masses of $|h|\,dv$ are controlled by
$CM_{q,r_0/2}h$. The positive factorization gives
\[
 T_h\in\mathcal S_\Psi,\qquad
 \|T_h\|_{\mathcal S_\Psi}
 \le C\|\Delta\|_{\mathcal M_\Psi}.
\]
Indeed,
\[
 s_{2k-1}(T_h)\le s_k(T_{|h|}),
 \qquad \Psi(2k)\le C\Psi(k).
\]

If also $\|T_f\|_{\mathcal S_\Psi}<\infty$, the identity
$M_g=JT_f+(H_g-JT_h)$ on $\Gamma$ gives a compact extension of
$M_g$. The closure argument after \eqref{eq:column} identifies it
with multiplication. Ky Fan's inequality and the doubling consequence of
\eqref{eq:upper-type} give
\[
 \|M_g\|_{\mathcal S_\Psi}
 \le C\bigl(\|T_f\|_{\mathcal S_\Psi}+D_\Psi\bigr).
\]
Now
$M_g^*M_g=T_{|g|^2}.$
The local mass of $|g|^2dv$ is $|B(0,r)|(M_{2,r}g)^2$.
Apply \eqref{eq:positive-envelope} to $\Psi^2$ and use
$(v^2)^*=(v^*)^2$ for $v\ge0$, together with
$s_k(M_g^*M_g)=s_k(M_g)^2$. This gives
\begin{equation}\label{eq:envelope-g-local}
 \|M_{2,r}g\|_{\mathcal M_\Psi}
 \le C\bigl(\|T_f\|_{\mathcal S_\Psi}+D_\Psi\bigr).
\end{equation}

The pointwise estimates
\eqref{eq:sizeforward}--\eqref{eq:sizereverse}, together with the
upper-type averaging lemma and fixed-radius comparison, yield
\[
 \begin{split}
 \|M_{q,r}f\|_{\mathcal M_\Psi}
 &\le C\bigl(\|M_{2,r}g\|_{\mathcal M_\Psi}+D_\Psi\bigr),\\
 \|M_{2,r}g\|_{\mathcal M_\Psi}
 &\le C\bigl(\|M_{q,r}f\|_{\mathcal M_\Psi}+D_\Psi\bigr).
 \end{split}
\]
Combining the first estimate with
\eqref{eq:envelope-g-local} proves the reverse operator/local-size
bound in \eqref{eq:regular-decay-equivalence}.

It remains to compare the scalar tests. The direct estimates
$|a_rf|\le M_{q,r}f$
and
\[
 |b_f(z)|\le C\sum_{k\in\mathbb Z^{2n}}
 e^{-c|k|}M_{q,r}f(z+\delta k)
\]
together with the upper-type averaging lemma give
\[
 \|a_rf\|_{\mathcal M_\Psi}
 +\|b_f\|_{\mathcal M_\Psi}
 \le C\|M_{q,r}f\|_{\mathcal M_\Psi}.
\]
Conversely, apply \eqref{eq:meanrecovery} with exponent $\theta$.
Then \eqref{eq:nonlinear-envelope-averaging} and fixed-radius
equivalence give
\[
 \|M_{q,r}f\|_{\mathcal M_\Psi}
 \le C\bigl(\|a_rf\|_{\mathcal M_\Psi}
       +\|G_{q,r}f\|_{\mathcal M_\Psi}\bigr).
\]

For the Berezin converse, assume only $D_\Psi<\infty$ and
$\|b_f\|_{\mathcal M_\Psi}<\infty$. Use the same distance-based
decomposition, and let $d\mu_h=|h|\,dv$. Its fixed-ball masses are
bounded by $C\Delta$, so \eqref{eq:positive-envelope} and
$|b_h|\le\widetilde\mu_h$ give
\[
 \|b_h\|_{\mathcal M_\Psi}
 \le C\|\Delta\|_{\mathcal M_\Psi}.
\]
The two measures arising from the Hankel estimate for $g$ have
fixed-ball masses bounded by $C\Delta^2$. Applying
\eqref{eq:positive-envelope} to $\Psi^2$ and testing the corresponding
form inequalities on $k_z$ gives
\[
 \|E_g\|_{\mathcal M_\Psi}
 \le C\|\Delta\|_{\mathcal M_\Psi},
 \qquad
 E_g(z)=\|H_gk_z\|_{2,\varphi}.
\]
Since
$b_g=b_f-b_h,$
the local-$L^2$ size estimate used before
\eqref{eq:lorentz-local-recovery}, together with the upper-type
averaging lemma, yields
\[
 \|M_{2,r}g\|_{\mathcal M_\Psi}
 \le C\bigl(\|b_f\|_{\mathcal M_\Psi}+D_\Psi\bigr).
\]
Finally, \eqref{eq:sizeforward} and fixed-radius comparison give
\[
 \|M_{q,r}f\|_{\mathcal M_\Psi}
 \le C\bigl(\|b_f\|_{\mathcal M_\Psi}+D_\Psi\bigr).
\]
This proves \eqref{eq:regular-decay-equivalence}.
\end{proof}

\section{Weyl laws for quadratic weights and a sharp limitation}
\label{sec:weyl}
The counting comparison applies to complex symbols with a nonradial
principal part. In the standard model, we compute the leading constant
by the anti-Wick/Weyl correspondence. Unitary maps
then give the result for every strictly convex quadratic weight.

\begin{theorem}[The standard quadratic model]
\label{thm:classical-weyl}
Let $\varphi(z)=|z|^2/2$ and let $m>0$. Identify $\C^n$ with
$\R^{2n}$ and write $\langle w\rangle=(1+|w|^2)^{1/2}$ for Euclidean
vectors $w$. Suppose that
$f$ is a complex-valued symbol of order $-m$ with an asymptotically
homogeneous principal part in the following sense: for some
$\delta>0$, some
$f_{-m}\in C^\infty(S^{2n-1})$, and a cut-off
$\chi\in C^\infty(\R^{2n})$ which vanishes near the origin and equals
one for $|z|$ large,
\[
 f(z)=\chi(z)|z|^{-m}f_{-m}(z/|z|)+r(z),
 \qquad
 |\partial^\alpha r(z)|\le C_\alpha
    \langle z\rangle^{-m-\delta-|\alpha|}
\]
for every real multi-index $\alpha$. Assume $f_{-m}\not\equiv0$ and set
\[
 \rho=\frac{2n}{m},\qquad
 C_f=\frac1{2n\pi^n}\int_{S^{2n-1}}
       |f_{-m}(\omega)|^\rho\,dS(\omega).
\]
Here $dS$ is standard Euclidean surface measure.
Then the Toeplitz operator $T_f$ on $F^2_\varphi$ satisfies
\begin{equation}\label{eq:classical-weyl}
 N(t;T_f)\sim C_ft^{-\rho}\quad(t\downarrow0),
 \qquad
 s_j(T_f)\sim C_f^{1/\rho}j^{-1/\rho}\quad(j\to\infty).
\end{equation}
No ellipticity is assumed: $f_{-m}$ may vanish on open subsets, or on a
set of positive surface measure; only $f_{-m}\not\equiv0$ is required.
\end{theorem}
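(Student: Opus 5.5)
The plan is to reduce to Proposition~\ref{prop:counting-transfer} with $q=2$, so the leading constant comes from the positive operator $M_g^*M_g=T_{|g|^2}$. Since $f$ is smooth and decays, the decomposition of Lemma~\ref{lem:q-decomposition} is not needed. We use directly the identity $M_f=JT_f+H_f$ on $\Gamma$, so that $g=f$ and $R=H_f$. Two facts are then required. First, $H_f\in\Sch_{\rho,\infty}^0$. Second, $N(t^2;T_{|f|^2})\sim C_ft^{-\rho}$. With these, \eqref{eq:exact-counting-transfer} gives $N(t;T_f)\sim C_ft^{-\rho}$. The statement about $s_j(T_f)$ is the usual inversion of a regularly varying counting function.

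\textbf{Step 1: the Hankel remainder.} Since $|\bar\partial f(z)|\lesssim\langle z\rangle^{-m-1}$, we have $G_{2,r}f(z)\lesssim\langle z\rangle^{-m-1}$. Indeed, on $B(z,r)$ we may subtract the constant $f(z)$ and then use the holomorphic first-order Taylor polynomial; the error is controlled by $\bar\partial f$ together with second derivatives, all of order $\langle z\rangle^{-m-1}$. The function $\langle z\rangle^{-m-1}$ lies in $L^{\rho',\infty}$ with $\rho'=2n/(m+1)<\rho$. Hence it lies in $L^{\rho,\tau_0}$ for every $\tau_0>0$, since it is bounded with a smaller weak exponent. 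Lemma~\ref{lem:lorentz-hankel} then gives $H_f\in\Sch_{\rho,\tau_0}\subset\Sch^0_{\rho,\infty}$. The domain conditions are immediate because $f$ is bounded. In the same way, $M_{2,r}f\lesssim\langle z\rangle^{-m}\in L^{\rho,\infty}$, so Theorem~\ref{thm:lorentz} makes $T_f$ and $M_f$ compact.

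\textbf{Step 2: the leading term for the positive symbol $F=|f|^2$.} This is a symbol of order $-2m$ with principal part $|z|^{-2m}|f_{-m}(\omega)|^2$; it is only Lipschitz where $f_{-m}$ vanishes, but that suffices. I would use the Bargmann transform, which turns $T_F$ on $F^2$ with $\varphi=|z|^2/2$ into the anti-Wick quantization of $F$, suitably rescaled, on $L^2(\R^n)$. The anti-Wick symbol differs from the Weyl symbol $F*$(Gaussian) by terms of order $\langle z\rangle^{-2m-1}$. Those terms contribute $o(t^{-\rho})$ by Step~1-type Lorentz estimates, via Lemma~\ref{lem:positive-lorentz} with exponent $\rho/2$. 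The leading count should then be the phase-space volume
\[
 (2\pi)^{-n}\cdot\bigl|\{F>t^2\}\bigr|
 \quad\text{with the Bargmann normalization }|\{\,\cdot\,\}|/\pi^n,
\]
and $\bigl|\{|z|^{-m}|f_{-m}(\omega)|>t\}\bigr|=\frac{t^{-\rho}}{2n}\int_{S^{2n-1}}|f_{-m}|^\rho\,dS$, which gives $C_f$.

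To avoid relying on pseudodifferential Weyl laws for nonelliptic symbols, I would instead prove Step~2 by a direct sandwich. Cover the sphere by small caps, and on each cap bound $|f_{-m}|^2$ above and below by constants. This yields cone-localized symbols $c\,\mathbf 1_{\text{cone}}|z|^{-2m}$. Disjoint cones then give asymptotically orthogonal operators, with cross terms controlled by the positive-form estimates of Lemma~\ref{lem:positive-lorentz} on the boundary layers. For a single cone symbol, rescale with the dilation $z\mapsto\lambda z$, which converts small $t$ to the semiclassical limit $h=\lambda^{-2}$. Then use the Berezin--Lieb inequalities $\operatorname{tr}\Phi(T_F)\le\int\Phi(F)\,\frac{dv}{\pi^n}$ (with $\Phi$ convex) together with their reverse Wick-side counterpart. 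These give matching upper and lower bounds for $\sum\Phi(s_k)$ with $\Phi$ approximating $\mathbf 1_{(t^2,\infty)}$, and the counting asymptotics follow by a standard Tauberian squeeze. Where $f_{-m}$ vanishes, the small caps contribute arbitrarily small constants times $t^{-\rho}$, so no ellipticity is needed.

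\textbf{Main obstacle.} The hardest step is making the Berezin--Lieb/Tauberian squeeze rigorous for a symbol that is homogeneous and singular at the origin. One must check that the errors introduced by the cutoff $\chi$, by the caps, and by the difference between Toeplitz and anti-Wick quantization are genuinely lower order in the counting sense. The plan is to handle every such error through Proposition~\ref{prop:counting-transfer}. In each case we exhibit the error as an operator in $\Sch^0_{\rho,\infty}$, or its square in $\Sch^0_{\rho/2,\infty}$. To do this we bound the associated local masses by $\epsilon\langle z\rangle^{-2m}$ plus a term with strictly faster decay, and then let $\epsilon\downarrow0$.
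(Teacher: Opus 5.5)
Your reduction is the paper's. It also takes $g=f$, $h=0$, $R=H_f$ in \eqref{eq:exact-counting-transfer}, gets $H_f\in\Sch_\rho$ from $G_{2,r}f=O(\langle z\rangle^{-m-1})$ and Lemma~\ref{lem:lorentz-hankel}, and is left with proving $N(t^2;T_{|f|^2})\sim C_ft^{-\rho}$. The paper gets this from the exact identity $\mathcal U^{-1}T_{|f|^2}\mathcal U=\operatorname{Op}^{\rm w}(e^{\Delta/4}b)$ and the nonelliptic negative-order Weyl formula of Dauge--Robert. No operator-level anti-Wick/Weyl comparison is needed there; Lemma~\ref{lem:positive-lorentz} would not control the non-Toeplitz operator $\operatorname{Op}^{\rm w}(a-b)$ anyway.

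\textbf{The gap is in your self-contained substitute for the Dauge--Robert step.} Put $F=|f|^2$ and $\widetilde F=F*\pi^{-n}e^{-|\cdot|^2}$. The Berezin--Lieb bounds
\[
 \pi^{-n}\int\Phi(\widetilde F)\,dv\le\operatorname{tr}\Phi(T_F)\le\pi^{-n}\int\Phi(F)\,dv
\]
hold only for convex $\Phi$, so $\Phi$ cannot approximate $\mathbf 1_{(t^2,\infty)}$. The usable choice is $\Phi(x)=(x-s)_+$, with $\operatorname{tr}(T_F-s)_+=\int_s^\infty N(u;T_F)\,du$, followed by a monotone Tauberian step. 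That recovers $N(u;T_F)\sim C_fu^{-\rho/2}$ only when $\rho>2$, i.e.\ $m<n$.
\begin{enumerate}
\item If $m>n$, then $T_F$ is trace class and $\operatorname{tr}(T_F-s)_+\to\operatorname{tr}T_F$, so the counting term sits in an $o(1)$ correction. You need $\operatorname{tr}T_F=\pi^{-n}\int F\,dv=\pi^{-n}\int\widetilde F\,dv$ to rewrite the sandwich as
\[
 \pi^{-n}\int\min(F,s)\,dv\le\int_0^sN(u;T_F)\,du\le\pi^{-n}\int\min(\widetilde F,s)\,dv.
\]
\item If $m=n$, the main term is only $C_f\log(1/s)$. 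The Tauberian step then needs $\int[(F-s)_+-(\widetilde F-s)_+]\,dv\to0$. By dominated convergence this amounts to $\int(F-\widetilde F)\,dv=0$ for this non-integrable $F$, which is true here but must be proved from the symbol structure.
\end{enumerate}
None of this is covered by ``a standard Tauberian squeeze''.

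\textbf{The cone/cap layer, which you single out as the main obstacle, is unnecessary and, as stated, unsupported.} Berezin--Lieb needs no homogeneity, smoothness or ellipticity. You can apply it directly to $F=|f|^2$, which is smooth (only $|f|$ is Lipschitz). Then $F-\widetilde F=O(\langle z\rangle^{-2m-2})$ gives $\pi^{-n}|\{F>u\}|\sim\pi^{-n}|\{\widetilde F>u\}|\sim C_fu^{-\rho/2}$, and zero sets of $f_{-m}$ cause no difficulty. The cone route, by contrast, has three problems.
\begin{enumerate}
\item Lemma~\ref{lem:positive-lorentz} gives no asymptotic orthogonality of cone pieces; counting functions of sums are only subadditive up to Ky Fan shifts.
\item The cap errors, of size $\epsilon\langle z\rangle^{-2m}$, are small in $\Sch_{\rho/2,\infty}$ but not in $\Sch^0_{\rho/2,\infty}$. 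So \eqref{eq:exact-counting-transfer} cannot absorb them, and you would need an $\epsilon$-approximation argument or operator monotonicity instead.
\item For $m\ge n$ the uncut symbols $\mathbf 1_{\rm cone}|z|^{-2m}$ are not locally integrable at $0$, so the dilation argument fails as written.
\end{enumerate}
With the $m>n$ and $m=n$ cases supplied, your route would be a genuinely self-contained alternative to Dauge--Robert. It works on the Fock side and does not need $f_{-m}$ smooth. The paper's route is shorter but rests entirely on the pseudodifferential Weyl formula.
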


\begin{proof}
For every fixed $r_0>0$, the symbol estimates and the constant local
holomorphic approximant $f(z)$ give
\[
 M_{2,r_0}f(z)=O(\langle z\rangle^{-m}),
 \qquad
 G_{2,r_0}f(z)=O(\langle z\rangle^{-m-1}).
\]
Thus $M_{2,r_0}f\in L^{\rho,\infty}$ and
$G_{2,r_0}f\in L^\rho$, because $(m+1)\rho>2n$. In particular,
$H_f\in\Sch_\rho$ by Lemma~\ref{lem:lorentz-hankel} with
$\tau=\rho$.

We calculate the positive comparison operator
$P_f=M_f^*M_f=T_{|f|^2}$. Replace $dv$ by $\pi^{-n}dv$
simultaneously in the ambient and Fock inner products. The projection,
$P_f$, and its singular values are unchanged. With this normalization,
let $\mathcal U:L^2(\R^n)\to F^2_\varphi$ be the Bargmann transform
\[
 (\mathcal Uu)(z)=\pi^{-n/4}\int_{\R^n}
 e^{-\frac12(|x|^2+z^2)+\sqrt2\,z\cdot x}u(x)\,dx.
\]
Here $z^2=\sum_{j=1}^n z_j^2$ is the holomorphic quadratic form.
For $X=(x,\xi)\in\R^{2n}$, set
\[
 b(X)=\left|f\left(\frac{x-i\xi}{\sqrt2}\right)\right|^2,
 \qquad
 a(X)=\pi^{-n}\int_{\R^{2n}}e^{-|X-Y|^2}b(Y)\,dY.
\]
The anti-Wick/Weyl correspondence \cite[Chapters 1--2]{FollandPhaseSpace}
gives
\[
 \mathcal U^{-1}P_f\mathcal U=\operatorname{Op}^{\rm w}(a),
 \qquad a=e^{\Delta/4}b,
\]
where, for a Schwartz function $u$, the Weyl integral is interpreted
in the oscillatory sense:
\[
 (\operatorname{Op}^{\rm w}(a)u)(x)
 =(2\pi)^{-n}\iint_{\R^n\times\R^n} e^{i(x-y)\cdot\xi}
 a\left(\frac{x+y}{2},\xi\right)u(y)\,dy\,d\xi.
\]
Choose a smooth cut-off $\widetilde\chi$ which vanishes near the origin
and equals one near infinity, and put
\[
 b_{-2m}(X)=\widetilde\chi(X)2^m|X|^{-2m}
 \left|f_{-m}\left(\frac{x-i\xi}{|X|}\right)\right|^2.
\]
The symbol assumptions give a remainder of order $-2m-\delta$ in
$b-b_{-2m}$, while the heat expansion gives a remainder of order
$-2m-2$ in $a-b$. Consequently, with $\eta=\min\{\delta,2\}$, for every
real multi-index $\alpha$,
\begin{equation}\label{eq:weyl-principal}
 |\partial^\alpha(a-b_{-2m})(X)|
 \le C_\alpha\langle X\rangle^{-2m-\eta-|\alpha|}.
\end{equation}
In particular,
\[
 a(r\theta)=2^m r^{-2m}
 |f_{-m}(\theta_x-i\theta_\xi)|^2+O(r^{-2m-\eta}),
\]
and \eqref{eq:weyl-principal} gives the corresponding estimates after
arbitrary derivatives.

The operator $P_f$ is nonnegative and selfadjoint, while
\eqref{eq:weyl-principal} places its real Weyl symbol in the required
negative-order symbol class with the displayed homogeneous principal
part. Apply the negative-order Weyl formula of Dauge and Robert
\cite[Section~2.A, third case, and Remark~1.6(2)]{DaugeRobertWeyl}.
Their argument also treats lower-order perturbations. No ellipticity of
the homogeneous principal symbol is needed. With
the Weyl normalization displayed above, \eqref{eq:weyl-principal} gives
\begin{align*}
 N(\lambda;P_f)
 &\sim \frac{(2\pi)^{-n}}{2n}\lambda^{-n/m}
 \int_{S^{2n-1}}
 \left(2^m|f_{-m}(\theta_x-i\theta_\xi)|^2\right)^{n/m}
 \,dS(\theta)\\
 &=\frac1{2n\pi^n}\lambda^{-n/m}
 \int_{S^{2n-1}}|f_{-m}(\omega)|^{2n/m}\,dS(\omega)
 =C_f\lambda^{-n/m}.
\end{align*}
Here $(\theta_x,\theta_\xi)\mapsto\theta_x-i\theta_\xi$ is an
orthogonal identification of $S^{2n-1}$ with the unit sphere in
$\C^n$. Taking $\lambda=t^2$ gives
$N(t^2;P_f)\sim C_ft^{-\rho}$. Since
$P_f=M_f^*M_f$ is compact, so is $M_f$. Also, $H_f\in\Sch_\rho$ gives
$N(\varepsilon t;H_f)=o(t^{-\rho})$ for every fixed
$\varepsilon>0$. Apply the separate transfer assertion
\eqref{eq:exact-counting-transfer} with $g=f$, $h=0$, and $R=H_f$.
It gives the first asymptotic in \eqref{eq:classical-weyl}.
Since $C_f>0$, monotone inversion gives the singular-value formula.
\end{proof}

Every strictly convex real quadratic weight on $\C^n$ can be written,
up to an additive real constant, as a positive Hermitian quadratic form
plus the real part of a holomorphic polynomial of degree at most two.
Thus the following theorem covers the full strictly convex quadratic
class.

\begin{theorem}[Quadratic weighted Weyl law]
\label{thm:quadratic-weyl}
Let $Q=Q^*>0$ be an $n\times n$ Hermitian matrix, let $c\in\R$, and
let $\psi$ be a holomorphic polynomial of degree at most two. Suppose
that the real quadratic function
$\varphi_Q(z)=\tfrac12\langle Qz,z\rangle+\operatorname{Re}\psi(z)+c$
is strictly convex. Let $f$ satisfy the symbol assumptions of
Theorem~\ref{thm:classical-weyl}, with order $-m$, principal part
$f_{-m}$, and $\rho=2n/m$. Put
\begin{equation}\label{eq:quadratic-weyl-constant}
 C_{f,Q}=\frac{\det_{\C}Q}{2n\pi^n}
 \int_{S^{2n-1}}|f_{-m}(\omega)|^\rho\,dS(\omega).
\end{equation}
Then the Toeplitz operator on $F^2_{\varphi_Q}$ satisfies
\[
N(t;T_f^{\varphi_Q})\sim C_{f,Q}t^{-\rho},
 \qquad
 s_j(T_f^{\varphi_Q})\sim C_{f,Q}^{1/\rho}j^{-1/\rho}.
\]
The first limit is as $t\downarrow0$ and the second as $j\to\infty$.
\end{theorem}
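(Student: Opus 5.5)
The plan is to reduce to the standard model of Theorem~\ref{thm:classical-weyl} in two steps. First remove the pluriharmonic part by a weighted unitary. Then remove $Q$ by a linear change of variables. Finally track how the symbol and its principal part transform.

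\emph{Step 1: the pluriharmonic part.} Set $\varphi_0(z)=\tfrac12\langle Qz,z\rangle$. The map $W_1u=e^{\psi/2+c}u$ is a unitary from $F^2_{\varphi_0}$ onto $F^2_{\varphi_Q}$. It intertwines multiplication by $f$ on the ambient $L^2$ spaces, and it maps holomorphic functions to holomorphic functions. Hence it intertwines the Bergman projections, and $T^{\varphi_Q}_f=W_1T^{\varphi_0}_fW_1^{-1}$. We must check that the initial domains $\Gamma$ correspond, since $W_1$ maps reproducing kernels to multiples of reproducing kernels. Then the singular values coincide.

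\emph{Step 2: the Hermitian part.} Write $Q=A^*A$ with $A$ invertible complex-linear, and put $\Phi(z)=A^{-1}z$. Then $\varphi_0\circ\Phi=|z|^2/2$. The operator $W_2u=|\det_{\C}A|^{-1}\,u\circ\Phi$ is unitary from $F^2_{\varphi_0}$ to $F^2_{|z|^2/2}$. This uses that the real Jacobian of $\Phi$ is $|\det_\C A|^{-2}$. It gives $T^{\varphi_0}_f\cong T^{|z|^2/2}_{f\circ\Phi}$, so $N(t;T_f^{\varphi_Q})=N(t;T_{f\circ\Phi})$ on the classical space.

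\emph{Step 3: the transformed symbol.} Next verify that $f\circ\Phi$ satisfies the hypotheses of Theorem~\ref{thm:classical-weyl}. The principal part is
\[
\chi(A^{-1}z)\,|A^{-1}z|^{-m}f_{-m}\bigl(A^{-1}z/|A^{-1}z|\bigr).
\]
This is homogeneous of degree $-m$ near infinity with angular profile $g(\omega)=|A^{-1}\omega|^{-m}f_{-m}(A^{-1}\omega/|A^{-1}\omega|)$. The cut-off mismatch, $\chi(A^{-1}z)$ versus a new cut-off, is compactly supported and smooth. It can therefore be absorbed into the remainder together with $r\circ\Phi$, since linear maps preserve the symbol estimates. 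Theorem~\ref{thm:classical-weyl} then yields the constant
\[
\frac1{2n\pi^n}\int_{S^{2n-1}}|g|^\rho\,dS.
\]

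\emph{Step 4: the constant.} Finally compute this integral, which is the main (though routine) obstacle. Since $\rho m=2n$, the function $|g(\omega)|^\rho$ equals $|A^{-1}\omega|^{-2n}|f_{-m}(\cdot)|^\rho$. Use polar coordinates: $\int_{S^{2n-1}}F\,dS=2n\int_{|x|<1}F(x/|x|)\,dv$ for $F$ of homogeneity $0$, and $\int_{S^{2n-1}}|g|^\rho\,dS$ equals $2n$ times the volume of $\{x:|x|^{-m}\ldots\}$. More cleanly, apply the identity $\int_{S}|h|^\rho\,dS = 2n\,|\{x:|h(x)|\ge1\}|$ valid for $h$ homogeneous of degree $-m$ with $\rho m=2n$. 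The linear change $x=A^{-1}y$ multiplies that volume by $|\det_{\R}A^{-1}|^{-1}=|\det_\C A|^2=\det_\C Q$. This gives \eqref{eq:quadratic-weyl-constant}. The singular-value asymptotic follows by monotone inversion as before.
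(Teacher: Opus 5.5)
Your proposal is correct and follows the paper's proof essentially step for step. A unitary multiplier removes the pluriharmonic part, and a linear change of variables reduces to the weight $|z|^2/2$; the paper takes $A=Q^{1/2}$. Your level-set volume identity simply proves the spherical Jacobian formula $\int_{S^{2n-1}}|L\theta|^{-2n}h(L\theta/|L\theta|)\,dS=|\det_{\R}L|^{-1}\int_{S^{2n-1}}h\,dS$ that the paper quotes. Your absorption of the cut-off mismatch into the remainder fills in a detail the paper leaves implicit.

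One slip to fix: with the paper's normalization $\|g\|_{2,\varphi}=\|ge^{-\varphi}\|_{L^2}$, the unitary in Step~1 must be multiplication by $e^{\psi+c}$, not $e^{\psi/2+c}$. This is because you need $|W_1u|e^{-\varphi_Q}=|u|e^{-\varphi_0}$; the corrected map is the inverse of the paper's $V$.
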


\begin{proof}
First remove the pluriharmonic term. If $F_Q^2$ denotes the space
with weight $\langle Qz,z\rangle/2$, then
\[
 V:F^2_{\varphi_Q}\longrightarrow F_Q^2,
 \qquad (Vg)(z)=e^{-\psi(z)-c}g(z),
\]
is unitary, as is its ambient $L^2$ extension. It commutes with
multiplication by $f$, and therefore
$VT_f^{\varphi_Q}V^{-1}=T_f^Q$.

Set $w=Q^{1/2}z$ and define
\[
 W:F_Q^2\longrightarrow F^2_{|w|^2/2},
 \qquad
 (Wg)(w)=(\det_{\C}Q)^{-1/2}g(Q^{-1/2}w).
\]
Since $dv(w)=(\det_{\C}Q)\,dv(z)$, this map and its ambient extension are
unitary. Hence
\[
 WT_f^QW^{-1}=T_{\widetilde f},
 \qquad \widetilde f(w)=f(Q^{-1/2}w).
\]
The transformed symbol has principal part
\[
 \widetilde f_{-m}(\theta)
 =|Q^{-1/2}\theta|^{-m}
 f_{-m}\left(\frac{Q^{-1/2}\theta}
 {|Q^{-1/2}\theta|}\right).
\]
Apply Theorem~\ref{thm:classical-weyl} to $\widetilde f$. For every
invertible real linear map $L$ on $\R^{2n}$ and every integrable $h$ on
the sphere,
\[
 \int_{S^{2n-1}}|L\theta|^{-2n}
 h\left(\frac{L\theta}{|L\theta|}\right)dS(\theta)
 =|\det_{\R}L|^{-1}\int_{S^{2n-1}}h(\omega)\,dS(\omega).
\]
Use $L=Q^{-1/2}$ and $m\rho=2n$. Since
$|\det_{\R}Q^{-1/2}|^{-1}=\det_{\C}Q$, the standard coefficient becomes
exactly \eqref{eq:quadratic-weyl-constant}. Unitary equivalence
preserves singular values and counting functions.
\end{proof}

The Hessian bounds in \eqref{eq:weight} are sufficient for all local
and ideal estimates above, but they do not determine a global
phase-space density. The following example shows that this distinction
is sharp, even for a positive radial symbol in one complex dimension.

\begin{proposition}[Uniform Hessian bounds do not force a Weyl constant]
\label{prop:no-general-weyl}
For every $\beta>0$ there exist a smooth radial weight $\varphi$ on
$\C$ satisfying
$\tfrac34 I_2\le\operatorname{Hess}_{\R}\varphi\le\tfrac94 I_2$
and the positive radial symbol
$f(z)=(1+|z|^2)^{-\beta/2}$ such that the sequence
$j^{\beta/2}s_j(T_f)$ has two different positive subsequential limits.
Consequently there is no constant $C$ for which
$N(t;T_f)\sim Ct^{-2/\beta}$.
\end{proposition}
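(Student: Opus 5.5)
The plan is to build $\varphi$ explicitly, so that $T_f$ is diagonal in the monomials and its eigenvalues can be read off by a Laplace-method computation. For $z\in\C$ with $r=|z|$, put
\[
 a(r)=\tfrac32+\tfrac14\sin\bigl(\log\log(e+r^2)\bigr),\qquad
 \varphi(z)=\int_0^{r}s\,a(s)\,ds .
\]
Since $a$ is a smooth function of $r^2$, $\varphi$ is a smooth radial function on $\C$.

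\emph{Step 1: the Hessian bounds.} For a radial function the real Hessian has eigenvalues $\varphi'(r)/r=a(r)$ and $\varphi''(r)=a(r)+ra'(r)$. Here $a\in[5/4,7/4]$. A direct computation gives
\[
 |ra'(r)|\le \tfrac14\cdot\frac{2r^2}{(e+r^2)\log(e+r^2)}\le\tfrac12,
\]
so both eigenvalues lie in $[3/4,9/4]$. Hence the weight satisfies \eqref{eq:weight} with the stated constants.

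\emph{Step 2: diagonalization and monotonicity.} Because $\varphi$ and $f$ are radial, $T_f$ is diagonal in the orthogonal basis $\{z^k\}$ of $F^2_\varphi$. Its eigenvalues are
\[
 \lambda_k=\frac{\int_0^\infty f(r)\,r^{2k+1}e^{-2\varphi(r)}\,dr}{\int_0^\infty r^{2k+1}e^{-2\varphi(r)}\,dr}
 =\int f\,d\nu_k ,
\]
where $\nu_k$ is the normalized probability measure in the denominator. The ratio $d\nu_{k+1}/d\nu_k\propto r^2$ is increasing, so the $\nu_k$ are stochastically increasing in $k$. Since $f$ is positive and decreasing in $r$, the sequence $\lambda_k$ is decreasing. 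Therefore $T_f$ is positive with $s_{j}(T_f)=\lambda_{j-1}$.

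\emph{Step 3: Laplace asymptotics (the main step).} Write $\Phi_k(r)=(2k+1)\log r-2\varphi(r)$. Its unique maximum $r_k$ solves $r_k\varphi'(r_k)=k+\tfrac12$, that is, $r_k^2a(r_k)=k+\tfrac12$, so $r_k\asymp\sqrt k$. The second derivative is $\Phi_k''(r)=-(2k+1)/r^2-2\varphi''(r)$. On $\{|r-r_k|\le r_k/2\}$ this is bounded above by a negative constant and below by a constant, uniformly in $k$. Moreover $\Phi_k$ is strictly concave in $\log r$. Together these show that $\nu_k$ concentrates: for every fixed $\varepsilon>0$, we have $\nu_k(\{|r-r_k|>\varepsilon r_k\})\to0$. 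I expect this Gaussian-type tail control to need the most care; the bounds $\tfrac34\le\varphi''\le\tfrac94$ are what make it work. The symbol $f$ varies only by a factor $1+O(\varepsilon)$ on $[(1-\varepsilon)r_k,(1+\varepsilon)r_k]$, and $0<f\le1$. Hence
\[
 \lambda_k=f(r_k)(1+o(1))=r_k^{-\beta}(1+o(1))=\Bigl(\frac{a(r_k)}{k}\Bigr)^{\beta/2}(1+o(1)).
\]

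\emph{Step 4: conclusion.} It follows that $j^{\beta/2}s_j(T_f)=a(r_{j-1})^{\beta/2}+o(1)$. As $k\to\infty$ we have $r_k\to\infty$ continuously enough: consecutive values satisfy $r_{k+1}/r_k\to1$, so $\log\log(e+r_k^2)$ increases by $o(1)$ steps to infinity. Therefore $a(r_k)$ has every value in $[5/4,7/4]$ as a subsequential limit, in particular $5/4$ and $7/4$. Thus $j^{\beta/2}s_j(T_f)$ has the two distinct positive subsequential limits $(5/4)^{\beta/2}$ and $(7/4)^{\beta/2}$.

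Finally, suppose $N(t;T_f)\sim Ct^{-2/\beta}$ for some constant $C$. If $C>0$, monotone inversion gives $s_j\sim C^{\beta/2}j^{-\beta/2}$, which contradicts Step 4. If $C=0$, inversion gives $j^{\beta/2}s_j\to0$, which again contradicts the positive subsequential limits. Hence no such constant exists.
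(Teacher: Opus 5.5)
Your proof is correct. It takes a slightly different route from the paper's.

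\textbf{Comparison with the paper.} The paper sets $\Phi'(r)=r\gamma(\log r)$, where $\gamma\in[1,2]$ has exact plateaux $\gamma\equiv a_j\in\{1,2\}$ near $\sigma_j$. It runs the Laplace argument only along indices $k_j$ whose maximizer $\widehat r_j$ falls in a plateau, so the critical-point equation reads exactly $a_j\widehat r_j^{\,2}=k_j+\tfrac12$. You instead take the explicit factor $a(r)=\tfrac32+\tfrac14\sin\log\log(e+r^2)$. Since $\varphi'(r)=ra(r)$, the equation $r_k^2a(r_k)=k+\tfrac12$ holds for every $k$, so no plateaux are needed and the Laplace estimate is uniform in $k$. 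This gives the full asymptotic $s_{k+1}(T_f)=(a(r_k)/k)^{\beta/2}(1+o(1))$. The only extra input is that $\log\log(e+r_k^2)$ increases to infinity with vanishing steps, which follows from $r_{k+1}-r_k\le 1/(2r_k)$ because $\frac{d}{dr}(r^2a)\ge 2r$. You therefore get the whole interval $[(5/4)^{\beta/2},(7/4)^{\beta/2}]$ as cluster set, not just two points. Your weight is fully explicit, whereas the paper only asserts that a plateau function $\gamma$ with $|\gamma'|\le\tfrac14$ exists, and your Hessian check is a one-line computation.

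\textbf{One inference in Step 3 needs tightening.} From ``$\nu_k(\{|r-r_k|>\varepsilon r_k\})\to0$ and $0<f\le1$'' you cannot conclude $\lambda_k=f(r_k)(1+o(1))$, because $f(r_k)\asymp k^{-\beta/2}\to0$. The contribution of the lower tail $\{r<(1-\varepsilon)r_k\}$ must be $o(r_k^{-\beta})$; the upper tail is harmless because $f$ is decreasing. The Gaussian control you allude to supplies this:
\begin{itemize}
\item Globally $\Phi_k''\le-2\varphi''\le-\tfrac32$, so $\Phi_k(r)\le\Phi_k(r_k)-\tfrac34(r-r_k)^2$.
\item On $|r-r_k|\le r_k/2$ one has $\Phi_k''\ge -4(2k+1)/r_k^2-\tfrac92=-8a(r_k)-\tfrac92\ge -C$. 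Hence the denominator is at least $c\,e^{\Phi_k(r_k)}$.
\item Together these give $\nu_k(\{|r-r_k|>\varepsilon r_k\})\le Ce^{-c\varepsilon^2r_k^2}=o(r_k^{-\beta})$.
\end{itemize}
Your conclusion then follows by letting $k\to\infty$ and then $\varepsilon\downarrow0$. This is the same point the paper handles when it notes that the Gaussian tail is $o(1)$ after division by $f(\widehat r_j)$.
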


\begin{proof}
Choose $\gamma\in C^\infty(\R;[1,2])$, constant near $-\infty$, with
$|\gamma'|\le1/4$. Fix $\delta>0$ and choose
$\sigma_j\to\infty$ so that $\gamma\equiv a_j$ on
$[\sigma_j-\delta,\sigma_j+\delta]$, where
$a_j\in\{1,2\}$ alternates. Put
\[
 \Phi'(r)=r\gamma(\log r),\qquad \Phi(0)=0,
 \qquad \varphi(z)=\Phi(|z|).
\]
Taking $\gamma=1$ near $-\infty$ makes $\varphi$ smooth at the origin.
The tangential and radial eigenvalues of its real Hessian are
\[
 \frac{\Phi'(r)}r=\gamma(\log r),
 \qquad
 \Phi''(r)=\gamma(\log r)+\gamma'(\log r),
\]
which proves the stated Hessian bounds.

The normalized monomials form an orthonormal basis. Indeed, angular
integration of the Taylor series gives the norm as the sum of its
nonnegative monomial terms. The Taylor polynomials therefore converge
in the Fock norm. If $e_k$ is the normalized monomial of
degree $k$, then $T_fe_k=\lambda_ke_k$, where
\begin{equation}\label{eq:variable-weight-eigenvalue}
 \lambda_k=
\frac{\displaystyle\int_0^\infty f(r)r^{2k+1}e^{-2\Phi(r)}\,dr}
 {\displaystyle\int_0^\infty r^{2k+1}e^{-2\Phi(r)}\,dr}.
\end{equation}
Since $f(z)\to0$, the positive Toeplitz compactness criterion shows
that $T_f$ is compact. The numbers $\lambda_k$ decrease. Indeed, with expectation taken
against the denominator-normalized measure in \eqref{eq:variable-weight-eigenvalue},
\[
 \lambda_{k+1}=\frac{\mathbb E_k(fr^2)}{\mathbb E_k(r^2)}
 \le\mathbb E_k f=\lambda_k,
\]
because $f$ decreases and $r^2$ increases. Thus, in decreasing order,
$s_{k+1}(T_f)=\lambda_k$.

Let $R_j=e^{\sigma_j}$. Choose integers $k_j$ with
$k_j+\tfrac12=a_jR_j^2+O(1)$. The logarithm of the density in the
denominator of \eqref{eq:variable-weight-eigenvalue} is
$h_k(r)=(2k+1)\log r-2\Phi(r).$
For every $k$, one has
\[
 h_k''(r)=-\frac{2k+1}{r^2}
 -2\bigl(\gamma(\log r)+\gamma'(\log r)\bigr)\le-\tfrac32.
\]
Thus $h_k$ has a unique maximizer. If $\widehat r_j$ denotes the
maximizer for $k=k_j$, then
$h_{k_j}'(R_j)=O(R_j^{-1})$. The mean-value theorem and the preceding
curvature bound give
$|\widehat r_j-R_j|=O(R_j^{-1})$. Hence $\widehat r_j$ lies in the
$a_j$-plateau for large $j$, and only now the critical-point equation
gives
$a_j\widehat r_j^2=k_j+\tfrac12.$

On $|r-\widehat r_j|\le1$ one also has
$h_{k_j}''(r)\ge-C$, whereas globally
\[
 h_{k_j}(r)\le h_{k_j}(\widehat r_j)
       -\tfrac34(r-\widehat r_j)^2.
\]
Consequently the denominator in
\eqref{eq:variable-weight-eigenvalue} is at least
$ce^{h_{k_j}(\widehat r_j)}$, and its normalized mass outside
$|r-\widehat r_j|\le R$ is at most $Ce^{-cR^2}$. Take
$R=\widehat r_j^{1/2}$. This interval lies inside the plateau for
large $j$, and the logarithmic derivative of $f$ gives
\[
 \sup_{|r-\widehat r_j|\le R}
 \left|\frac{f(r)}{f(\widehat r_j)}-1\right|\longrightarrow0.
\]
It follows that
\[
 \lambda_{k_j}=f(\widehat r_j)(1+o(1))
 =\widehat r_j^{-\beta}(1+o(1)).
\]
The Gaussian tail contributes
$O(\widehat r_j^\beta e^{-c\widehat r_j})=o(1)$ after division by
$f(\widehat r_j)$. Hence
\[
 (k_j+1)^{\beta/2}s_{k_j+1}(T_f)
 =a_j^{\beta/2}+o(1).
\]
The alternating plateaux give the two limits $1$ and $2^{\beta/2}$.
A counting asymptotic with a fixed coefficient would, by monotone
inversion, force a single limit for $j^{\beta/2}s_j(T_f)$, a
contradiction.
\end{proof}

\section{Examples and limitations}\label{sec:examples}
We now give several examples that clarify the role of the hypotheses
in the preceding results. They also show that analytic distance alone
does not control Toeplitz boundedness or compactness, and indicate the
limits of the criteria proved above.
\begin{example}[A zero BDA seminorm does not imply boundedness]
\label{ex:analytic-part}
For the classical weight $\varphi(z)=|z|^2/2$, let $f(z)=z_1$.
Then $G_{q,r}f=0$ for every $q>0$. However,
$T_fk_z(w)=w_1k_z(w)$. Evaluation at $w=z$ gives
\[
 \norm{T_fk_z}_{q,\varphi}\ge c|z_1|,
 \qquad \sup_z\norm{k_z}_{p,\varphi}<\infty.
\]
Thus $T_f:F^p_\varphi\to F^q_\varphi$ is unbounded for every finite
$p,q\ge1$. This shows why analytic distance alone cannot characterize Toeplitz
boundedness.
\end{example}

\begin{example}[Why the local exponent cannot be replaced by two]
\label{ex:local-q-below-two}
Let $d=2n$ and consider the classical weight $\varphi(z)=|z|^2/2$.
Let $e_1=(1,0,\ldots,0)\in\C^n$. Take centers $a_j=4je_1$,
radii $\varepsilon_j=2^{-j-4}$, and
\[
 f(z)=\sum_{j=1}^\infty
    \varepsilon_j^{-d/2}\mathbf1_{B(a_j,\varepsilon_j)}(z).
\]
For every $0<q<2$ and $0<s<\infty$,
$M_{q,1}f\in L^s$ and hence $f\in\IDA^{s,q}$: the local $L^q$ size
of the $j$th spike is a constant multiple of
$\varepsilon_j^{d(1/q-1/2)}$. The local $L^1$ masses are
$C\varepsilon_j^{d/2}$, so positive Toeplitz theory gives
$T_f\in\mathcal S_s$ for every $s>0$.
Nevertheless, $M_{2,1}f\ge c>0$ on the disjoint balls
$B(a_j,1/2)$, and therefore $M_{2,1}f\notin L^s$ for every finite $s$.
For each fixed $\zeta$, the normalized Gaussian kernel gives
\[
 \int_{\C^n}|f(w)k_\zeta(w)|^2e^{-|w|^2}\,dv(w)
 \le C_\zeta\sum_{j=1}^\infty e^{-c j^2}<\infty.
\]
Thus the symbol belongs to the kernel $L^2$ domain as well.
Also, $f\in L^1(dv)$ and $|k_\zeta|e^{-\varphi}$ is bounded, so
$f\in\Symb$.

The analytic-distance conditions are different too. Fix
$1\le q<2$. The functions $M_{q,1}f$ are supported on disjoint balls
of uniformly bounded volume, with geometrically decreasing upper
bounds. Thus, for every $0<\rho<\infty$ and $0<\tau\le\infty$,
\[
 M_{q,1}f,\ G_{q,1}f\in L^{\rho,\tau},
 \qquad T_f\in\Sch_{\rho,\tau}.
\]
For the function assertion, use $M_{q,1}f\in L^\infty\cap L^s$
with $0<s<\rho$ and $G_{q,1}f\le M_{q,1}f$. For the operator
assertion, use the already proved membership $T_f\in\Sch_s$ and
$\ell^s\subset\ell^{\rho,\tau}$.

In contrast, $G_{2,1}f\notin L^{\rho,\infty}$ for any finite
$\rho$. Take $z\in B(a_j,1/4)$ and put $D=B(z,1)$ and
$v_d=|B(0,1)|$. The restriction $F=f|_D$ consists of the $j$th
spike, whose support lies in $B(z,1/2)$. Let $\Pi_D$ be the
unweighted Bergman projection on $D$. Holomorphic submean on
$B(w,1/2)\subset D$, for $w\in\operatorname{supp}F$, gives
\[
 \sup_{\operatorname{supp}F}|h|
 \le 2^{d/2}v_d^{-1/2}\|h\|_{L^2(D)}
 \qquad(h\in L^2(D)\cap\mathcal H(D)).
\]
By duality within this Hilbert space,
\[
 \|\Pi_DF\|_{L^2(D)}
 \le 2^{d/2}v_d^{-1/2}\|F\|_{L^1(D)}
 =2^{d/2}v_d^{1/2}\varepsilon_j^{d/2}.
\]
Since $\|F\|_{L^2(D)}^2=v_d$, orthogonal projection yields
\[
 G_{2,1}f(z)^2
 =\frac{\|F\|_{L^2(D)}^2-\|\Pi_DF\|_{L^2(D)}^2}{v_d}
 \ge1-(2\varepsilon_j)^d\ge\tfrac12.
\]
Only square-integrable holomorphic approximants can have finite
$L^2$ error, so this projection identity agrees with the definition
of $G_{2,1}$. The balls $B(a_j,1/4)$ are disjoint and have the same
positive volume. Hence a positive level set of $G_{2,1}f$ has
infinite measure. In particular, for every $0<\rho<\infty$ and
$0<\tau\le\infty$,
\[
 f\in\IDA^{(\rho,\tau),q}\quad(1\le q<2),
 \qquad f\notin\IDA^{(\rho,\infty),2}.
\]
Even membership in the kernel-$L^2$ domain therefore does not allow
the local-$q$ hypothesis to be replaced by a local-$L^2$ IDA
hypothesis. The decomposition in Theorem~\ref{thm:lorentz} handles
this distinction.
\end{example}

\begin{example}[The secondary index in the IDA hypothesis]
\label{ex:secondary-index}
Fix $r>0$, $0<\tau<\rho<\infty$, and
$0<p<\min\{1,\rho\}$. On the classical Fock space
$\varphi(z)=|z|^2/2$, there is $f\in C^\infty(\C^n)\cap L^\infty(\C^n)$ such that
\[
 T_f\in\Sch_p\subset\Sch_{\rho,\tau},\qquad
 G_{2,r}f,\ M_{2,r}f\in L^\rho\setminus L^{\rho,\tau}.
\]
Thus, in Theorem~\ref{thm:lorentz}, replacing the hypothesis
$G_{2,r}f\in L^{\rho,\tau}$ by $G_{2,r}f\in L^\rho$ can invalidate
the equivalence between the operator condition and the local-size
condition. This does not assert optimality among all possible
analytic-distance hypotheses.

Choose a nonzero function $\chi\in C_c^\infty(B(0,r/4))$ with
$0\le\chi\le1$, and put
\[
 u_\lambda(z)=\chi(z)e^{i\lambda\operatorname{Re}z_1},
 \qquad \lambda>0.
\]
We first verify two facts about these symbols.

Let $A:F^2_\varphi\to L^2_\varphi$ be multiplication by
$\chi^{1/2}$. Since $A^*A=T_\chi$, the positive Toeplitz criterion
in Lemma~\ref{lem:positive-lorentz} gives $A\in\Sch_{2p}$.
Indeed, every fixed-radius local mean of $\chi$ is bounded and
compactly supported. Let $U_\lambda$ be multiplication by
$e^{i\lambda\operatorname{Re}z_1}$ on $L^2_\varphi$. Then
\[
 T_{u_\lambda}=A^*U_\lambda A.
\]
The operators $U_\lambda$ converge weakly to zero as
$\lambda\to\infty$. This follows from the Riemann--Lebesgue
lemma, since the product of two $L^2_\varphi$ functions, with the
weight $e^{-2\varphi}$, is integrable.

Choose finite-rank maps $A_N$ converging to $A$ in $\Sch_{2p}$.
For fixed $N$, the finite matrix of $A_N^*U_\lambda A_N$ tends to
zero entrywise, hence in $\Sch_p$. The Schatten product inequality
and the $p$-triangle inequality give, uniformly in $\lambda$,
\[
 \begin{split}
 \|A^*U_\lambda A-A_N^*U_\lambda A_N\|_{\Sch_p}^p
 &\le \|A-A_N\|_{\Sch_{2p}}^p
    \bigl(\|A\|_{\Sch_{2p}}^p+\|A_N\|_{\Sch_{2p}}^p\bigr).
 \end{split}
\]
First let $\lambda\to\infty$ and then $N\to\infty$. We obtain
\begin{equation}\label{eq:oscillatory-bump-small}
 \|T_{u_\lambda}\|_{\Sch_p}\longrightarrow0.
\end{equation}

Put $D=B(0,r/2)$, and let $\Pi_D$ be the unweighted Bergman
projection onto $A^2(D)=L^2(D)\cap\mathcal H(D)$. The map $\Pi_D M_\chi$ on $L^2(D)$
is compact. In fact, it is Hilbert--Schmidt, since
\[
 \int_D |\chi(w)|^2 K_D(w,w)\,dv(w)<\infty,
\]
where $K_D$ is the unweighted Bergman kernel and $\supp\chi$
is a compact subset of $D$. The functions
$e^{i\lambda\operatorname{Re}z_1}$ converge weakly to zero in
$L^2(D)$. Hence $\|\Pi_Du_\lambda\|_{L^2(D)}\to0$, while
$\|u_\lambda\|_{L^2(D)}=\|\chi\|_{L^2(D)}$. There is
$\lambda_0>0$ such that, for every $\lambda\ge\lambda_0$,
\begin{equation}\label{eq:oscillatory-bump-distance}
 \inf_{h\in A^2(D)}\|u_\lambda-h\|_{L^2(D)}^2
 =\|\chi\|_{L^2(D)}^2-\|\Pi_Du_\lambda\|_{L^2(D)}^2
 \ge\tfrac12\|\chi\|_{L^2(D)}^2.
\end{equation}

Choose $a$ with $1/\rho<a\le1/\tau$, and set
\[
 c_j=j^{-1/\rho}[\log(e+j)]^{-a},\qquad
 z_j=4rj e_1,\qquad j\ge1.
\]
Here $e_1=(1,0,\ldots,0)\in\C^n$. By
\eqref{eq:oscillatory-bump-small}, choose $\lambda_j\ge\lambda_0$
so that $\|T_{u_{\lambda_j}}\|_{\Sch_p}\le2^{-j}$. Define
\[
 f(z)=\sum_{j\ge1}c_j u_{\lambda_j}(z-z_j).
\]
The supports are disjoint and locally finite, so $f$ is smooth
and bounded. In particular, it belongs to the initial symbol domain
$\mathcal D_\varphi$.

The classical Fock translation
\[
 (W_bv)(z)=e^{z\cdot\bar b-|b|^2/2}v(z-b)
\]
is unitary on both $L^2_\varphi$ and $F^2_\varphi$. It commutes
with the projection and gives
$T_{u(\cdot-b)}=W_bT_uW_b^*$. Consequently, the series of
Toeplitz operators converges in $\Sch_p$, because
\[
 \sum_{j\ge1}\|c_jT_{u_{\lambda_j}(\cdot-z_j)}\|_{\Sch_p}^p
 \le\sum_{j\ge1}c_j^p2^{-jp}<\infty.
\]
Its sum is the original $T_f$. Indeed, the symbol partial sums
converge uniformly to $f$, with error at most $c_{N+1}$ after
$N$ terms. Their Toeplitz operators therefore converge to $T_f$
in operator norm. This identifies the $\Sch_p$ limit.

For the local functions, the balls $B(z_j,5r/4)$ are disjoint,
and a ball $B(z,r)$ meets at most one symbol support. Thus
\[
 M_{2,r}f(z)\le C\sum_{j\ge1}c_j
         \mathbf1_{B(z_j,5r/4)}(z).
\]
If $z\in B(z_j,r/4)$, then $z_j+D\subset B(z,r)$.
Restriction of a holomorphic approximant to $z_j+D$, followed
by \eqref{eq:oscillatory-bump-distance}, gives
\[
 G_{2,r}f(z)^2
 \ge\frac{c_j^2}{|B(0,r)|}
    \inf_{h\in A^2(D)}\|u_{\lambda_j}-h\|_{L^2(D)}^2
 \ge C_0c_j^2,
\]
where $C_0>0$ is independent of $j$ and $z$. We have proved
\[
 C_1\sum_{j\ge1}c_j\mathbf1_{B(z_j,r/4)}
 \le G_{2,r}f\le M_{2,r}f
 \le C_2\sum_{j\ge1}c_j\mathbf1_{B(z_j,5r/4)}.
\]
Each family consists of disjoint sets of equal positive measure.
The decreasing sequence $(c_j)$ satisfies
\[
 \sum_j c_j^\rho<\infty,
 \qquad
 \sum_j j^{\tau/\rho-1}c_j^\tau
 =\sum_j\frac{1}{j[\log(e+j)]^{a\tau}}=\infty.
\]
The two step functions therefore belong to $L^\rho$ and not to
$L^{\rho,\tau}$. The preceding bounds give the same conclusion
for $G_{2,r}f$ and $M_{2,r}f$. Finally, $p<\rho$ implies
$\Sch_p\subset\Sch_{\rho,\tau}$. This proves the claims.
\end{example}

\subsection*{Scope and further questions}

The mixed mapping results concern $1\le p,q<\infty$.
The Schatten--Lorentz criteria act on $F^2_\varphi$.
All results assume \eqref{eq:weight}.
Theorem~\ref{thm:lorentz} requires $G_{q,r}f\in L^{\rho,\tau}$.
When $\tau<\rho$, the weaker condition $G_{q,r}f\in L^\rho$
need not suffice; see Example~\ref{ex:secondary-index}.
To preserve a leading counting term, we require
$k^{1/\rho}s_k(R)\to0$ for the error operator $R$.
Proposition~\ref{prop:counting-transfer} gives sufficient conditions.
Theorem~\ref{thm:regular-decay} covers decay functions of finite upper
type. It does not cover exponential decay.
The exact Weyl law for strictly convex quadratic weights is proved in
Theorem~\ref{thm:quadratic-weyl}. Proposition~\ref{prop:no-general-weyl}
shows that uniform Hessian bounds alone do not determine a global Weyl
constant. A natural further question is which weaker asymptotic
geometric assumptions suffice.

\begingroup
\renewcommand{\addcontentsline}[3]{}
\section*{Declarations}
\endgroup
\textbf{Funding.} The first author was supported by the National Natural
Science Foundation of China (Grant No.~12401154). The second author was
supported by the National Natural Science Foundation of China
(Grant No.~12601234).

\textbf{Data availability.} No data were used in this study.

\textbf{Competing interests.} The authors declare no competing interests.

\clearpage
\begingroup
\linespread{0.93}\selectfont

\endgroup

\begin{thebibliography}{99}
\bibitem{Bauer} W. Bauer,
Mean oscillation and Hankel operators on the Segal--Bargmann space,
\emph{Integral Equations Operator Theory} \textbf{52} (2005), no.~1, 1--15.

\bibitem{BennettSharpley} C. Bennett and R. Sharpley,
\emph{Interpolation of Operators},
Pure and Applied Mathematics \textbf{129}, Academic Press, Boston, 1988.

\bibitem{BCI} W. Bauer, L. A. Coburn and J. Isralowitz,
Heat flow, BMO, and the compactness of Toeplitz operators,
\emph{J. Funct. Anal.} \textbf{259} (2010), no.~1, 57--78.

\bibitem{CIL} L. A. Coburn, J. Isralowitz and B. Li,
Toeplitz operators with BMO symbols on the Segal--Bargmann space,
\emph{Trans. Amer. Math. Soc.} \textbf{363} (2011), no.~6, 3015--3030.

\bibitem{DaugeRobertWeyl} M. Dauge and D. Robert,
Weyl's formula for a class of pseudodifferential operators with negative order on $L^2(\mathbb R^n)$,
in \emph{Pseudo-Differential Operators (Oberwolfach, 1986)},
Lecture Notes in Math. \textbf{1256}, Springer, Berlin, 1987, pp.~91--122.
\url{https://doi.org/10.1007/BFb0077739}.

\bibitem{Delin} H. Delin,
Pointwise estimates for the weighted Bergman projection kernel in $\mathbb C^n$,
using a weighted $L^2$ estimate for the $\bar\partial$ equation,
\emph{Ann. Inst. Fourier (Grenoble)} \textbf{48} (1998), no.~4, 967--997.

\bibitem{FWZ} Z. Fan, X. Wang and Z. Zeng,
IDA function and asymptotic behavior of singular values of Hankel operators on weighted Bergman spaces,
\emph{Adv. Math.} \textbf{501} (2026), Article 111113.
\url{https://doi.org/10.1016/j.aim.2026.111113}.

\bibitem{Farnsworth} D. Farnsworth,
Hankel operators, the Segal--Bargmann space, and symmetrically-normed ideals,
\emph{J. Funct. Anal.} \textbf{260} (2011), no.~5, 1523--1542.

\bibitem{FollandPhaseSpace} G. B. Folland,
\emph{Harmonic Analysis in Phase Space},
Annals of Mathematics Studies \textbf{122}, Princeton University Press,
Princeton, NJ, 1989.

\bibitem{HaV} R. Hagger and J. A. Virtanen,
Compact Hankel operators with bounded symbols,
\emph{J. Operator Theory} \textbf{86} (2021), no.~2, 317--329.

\bibitem{HL2011} Z. Hu and X. Lv,
Toeplitz operators from one Fock space to another,
\emph{Integral Equations Operator Theory} \textbf{70} (2011), no.~4, 541--559.

\bibitem{HL} Z. Hu and X. Lv,
Toeplitz operators on Fock spaces $F^p(\varphi)$,
\emph{Integral Equations Operator Theory} \textbf{80} (2014), 33--59.


\bibitem{HVTrans} Z. Hu and J. A. Virtanen,
Schatten class Hankel operators on the Segal--Bargmann space and the Berger--Coburn phenomenon,
\emph{Trans. Amer. Math. Soc.} \textbf{375} (2022), no.~5, 3733--3753.
\url{https://doi.org/10.1090/tran/8638}.

\bibitem{HVAPDE} Z. Hu and J. A. Virtanen,
IDA and Hankel operators on Fock spaces,
\emph{Anal. PDE} \textbf{16} (2023), no.~9, 2041--2077.
\url{https://doi.org/10.2140/apde.2023.16.2041}.

\bibitem{HVCorr} Z. Hu and J. A. Virtanen,
Corrigendum to ``Schatten class Hankel operators on the Segal--Bargmann space and the Berger--Coburn phenomenon'',
\emph{Trans. Amer. Math. Soc.} \textbf{376} (2023), no.~8, 6011--6014.
\url{https://doi.org/10.1090/tran/8857}.


\bibitem{HuWang} Z. Hu and E. Wang,
Hankel operators between Fock spaces,
\emph{Integral Equations Operator Theory} \textbf{90} (2018), no.~3, Paper No.~37.

\bibitem{HWZ} W. Huang, X. Wang and Z. Zeng,
Asymptotic behavior of singular values of Toeplitz and Hankel operators on Fock-type spaces,
\emph{Banach J. Math. Anal.} \textbf{20} (2026), Article 16.
\url{https://doi.org/10.1007/s43037-025-00477-8}.

\bibitem{IVW} J. Isralowitz, J. A. Virtanen and L. Wolf,
Schatten class Toeplitz operators on generalized Fock spaces,
\emph{J. Math. Anal. Appl.} \textbf{421} (2015), no.~1, 329--337.
\url{https://doi.org/10.1016/j.jmaa.2014.05.065}.

\bibitem{IZ} J. Isralowitz and K. Zhu,
Toeplitz operators on the Fock space,
\emph{Integral Equations Operator Theory} \textbf{66} (2010), no.~4, 593--611.

\bibitem{JPR} S. Janson, J. Peetre and R. Rochberg,
Hankel forms and the Fock space,
\emph{Rev. Mat. Iberoam.} \textbf{3} (1987), no.~1, 61--138.

\bibitem{Luecking} D. H. Luecking,
Characterizations of certain classes of Hankel operators on the Bergman spaces of the unit disk,
\emph{J. Funct. Anal.} \textbf{110} (1992), no.~2, 247--271.

\bibitem{Orenstein} A. Orenstein,
Toeplitz operators on the Fock space in a symmetrically-normed ideal,
preprint, arXiv:1409.2410v7 [math.FA], 2018.

\bibitem{SV} A. P. Schuster and D. Varolin,
Toeplitz operators and Carleson measures on generalized Bargmann--Fock spaces,
\emph{Integral Equations Operator Theory} \textbf{72} (2012), no.~3, 363--392.

\bibitem{Simon} B. Simon,
\emph{Trace Ideals and Their Applications}, second ed.,
Mathematical Surveys and Monographs \textbf{120},
American Mathematical Society, Providence, RI, 2005.

\bibitem{WangBMOIMO} E. Wang,
Toeplitz operators with BMO and IMO symbols between Fock spaces,
\emph{Arch. Math. (Basel)} \textbf{114} (2020), no.~5, 541--551.

\bibitem{XuDoubling} C. Xu,
Essential norm and Schatten class of Hankel operators on doubling Fock spaces,
\emph{J. Math. Anal. Appl.} \textbf{542} (2025), no.~2, Article 128823.

\bibitem{XuAMS} C. Xu,
Bounded, compact and Schatten classes Hankel operators on weighted Fock spaces,
\emph{Acta Math. Sci.} \textbf{45} (2025), 1529--1554.

\bibitem{ZhangCaoHe} Y. Zhang, G. Cao and L. He,
Toeplitz operators with $\mathrm{IMO}^{s}$ symbols between generalized Fock spaces,
\emph{J. Funct. Spaces} \textbf{2021} (2021), Article ID 8044854.
\end{thebibliography}
\end{document}